\numberwithin{equation}{section} 
\title[A new strategy in the monomial case]{%
A new strategy for resolution of singularities 
\linebreak 
in the monomial case in positive characteristic} 
\author[H. Kawanoue and K. Matsuki]{%
Hiraku Kawanoue\thanks{%
The first author was partially supported by the Inamori Foundation.%
} and Kenji Matsuki} 
\address[kawanoue@kurims.kyoto-u.ac.jp]{{\sc Hiraku Kawanoue}: 
Research Institute for Mathematical Sciences, 
Kyoto University, 
Kitashirakawa-Oiwakecho, Sakyo-ku, 
Kyoto 606-8502, Japan 
} 
\address[kmatsuki@math.purdue.edu]{{\sc Kenji Matsuki}: 
Department of Mathematics, 
Purdue University, 
\\ 
150 N. University Street, West Lafayette, IN 47907-2067, USA 
} 
\keywords{Resolution of Singularities, IFP} 
\numberwithin{equation}{section} 
\theoremstyle{plain} 
\newtheorem{art_prop}{Proposition} 
\newtheorem{art_thm}{Theorem} 
\newtheorem{art_cor}{Corollary} 
\newtheorem{art_lem}{Lemma} 
\newtheorem*{claim}{Claim} 
\theoremstyle{definition} 
\newtheorem{art_def}{Definition} 
\theoremstyle{remark} 
\newtheorem{art_rem}{Remark} 
\newcommand{\LPO}[1]{\thicklines \put( 45, 45){\line(1, 1){ 10}} 
\put( 45, 55){\line(1,-1){ 10}}\put( 57, 38){#1}\thinlines} 
\newcommand{\CPO}[1]{\thicklines \put( 95, 45){\line(1, 1){ 10}} 
\put( 95, 55){\line(1,-1){ 10}}\put(107, 38){#1}\thinlines} 
\newcommand{\RPO}[1]{\thicklines \put(145, 45){\line(1, 1){ 10}} 
\put(145, 55){\line(1,-1){10}}\put(157, 38){#1}\thinlines} 
\newcommand{\VLL}[2]{\put( 35, 73){#1}\put( 55, 73){#2} 
\put( 50, 20){\line(0, 1){ 60}}} 
\newcommand{\VCL}[2]{\put( 85, 73){#1}\put(105, 73){#2} 
\put(100, 20){\line(0, 1){ 60}}} 
\newcommand{\VRL}[2]{\put(135, 73){#1}\put(155, 73){#2} 
\put(150, 20){\line(0, 1){ 60}}} 
\newcommand{\RDL}[2]{\put(135, 73){#1}\put(155, 73){#2} 
\put(150, 20){\line(0, 1){ 8}}\put(150,30){\line(0, 1){ 5}} 
\put(150, 37){\line(0, 1){ 5}}\put(150, 44){\line(0, 1){ 5}} 
\put(150, 51){\line(0, 1){ 5}}\put(150, 58){\line(0, 1){ 5}} 
\put(150, 65){\line(0, 1){ 5}}\put(150, 72){\line(0, 1){ 8}}} 
\newcommand{\HOL}[2]{\put(195, 55){#1}\put(190, 40){#2} 
\put(  0, 50){\line( 1,0){ 200}}} 
\newcommand{\HDL}[2]{\put(195, 55){#1}\put(190, 40){#2} 
\put(  0, 50){\line( 1,0){ 8}}\put( 12, 50){\line( 1,0){ 6}} 
\put( 22, 50){\line( 1,0){ 6}}\put( 32, 50){\line( 1,0){ 6}} 
\put( 42, 50){\line( 1,0){ 6}}\put( 52, 50){\line( 1,0){ 6}} 
\put( 62, 50){\line( 1,0){ 6}}\put( 72, 50){\line( 1,0){ 6}} 
\put( 82, 50){\line( 1,0){ 6}}\put( 92, 50){\line( 1,0){ 6}} 
\put(102, 50){\line( 1,0){ 6}}\put(112, 50){\line( 1,0){ 6}} 
\put(122, 50){\line( 1,0){ 6}}\put(132, 50){\line( 1,0){ 6}} 
\put(142, 50){\line( 1,0){ 6}}\put(152, 50){\line( 1,0){ 6}} 
\put(162, 50){\line( 1,0){ 6}}\put(172, 50){\line( 1,0){ 6}} 
\put(182, 50){\line( 1,0){ 6}}\put(192, 50){\line( 1,0){ 8}} 
} 
\begin{document} 
 
%
%
 
\begin{abstract} 
According to our approach for resolution of singularities in positive characteristic (called the Idealistic Filtration Program, alias the I.F.P. for short) the algorithm is divided into the following two steps: 
 
\noindent\quad 
Step 1. Reduction of the general case to the monomial case. 
 
\noindent\quad 
Step 2. Solution in the monomial case. 
 
\noindent While we have established Step 1 in arbitrary dimension, 
Step 2 becomes very subtle and difficult in positive characteristic.  This is in clear contrast to the classical setting in characteristic zero, where the solution in the monomial case is quite easy. 
In dimension 3, we provided an invariant, inspired by the work of Benito-Villamayor, which establishes Step 2. 
In this paper, we propose a new strategy to approach Step 2, and provide a different invariant in dimension 3 based upon this strategy. 
The new invariant increases from time to time (the well-known Moh-Hauser jumping 
phenomenon), while it is then shown to eventually decrease.  Since the old invariant 
strictly decreases after each transformation, this may look like a step backward rather than forward.  However, the construction of the new invariant is more faithful to the original philosophy of Villamayor, and we believe that the new strategy has a better fighting chance in higher dimensions. 
\end{abstract} 
 
%
%
 
\setcounter{tocdepth}{1} 
\tableofcontents 
\begin{section}*{Acknowledgement} 
 
 
We learned ``the philosophy of Villamayor'' through his papers with 
A.~Benito and A.~Bravo, 
and through the numerous discussions with them 
and S.~Encinas 
during our stays in Madrid.  It is needless to say that, to Moh and Hauser, we owe most of the ideas used in our analysis of the jumping 
phenomenon and eventual decrease.\footnote{The analysis of the jumping 
phenomenon and eventual decrease is done in the monomial case in our setting, while the classical analysis by Moh or Hauser is done in a different setting without any reference to the monomial case.  Therefore, even though we owe most of the ideas to Moh and Hauser, our argument is carried out logically independent of their papers \cite{Hau08}\cite{Hau10}\cite{Moh}.} We thank 
O.~Villamayor, T.T.~Moh, and H.~Hauser 
for their teaching and encouragement both at the professional level and personal one.  The first author thanks 
S.~Perlega 
for the intensive discussions on the subject of the jumping 
phenomenon and eventual decrease during his stay at 
the Erwin Schr\"odinger Institute 
in the winter of 2012.  The main body of this work was done while the second author was staying at Research Institute for Mathematical Sciences at Kyoto University in the summers of 2014 and 2015.  We would like to express our deepest gratitude to 
S.~Mori and S.~Mukai 
for their warm hospitality and consistent support as well as the mathematical vision that has been the guiding light for us.  We would like to thank 
S.~Helmke and N.~Nakayama 
for their critical advice through long hours of conversations in the seminar.  It is the encouragement of 
M.~Kashiwara 
that has kept us going with our project over the period of more than 10 years. 
\end{section} 
\begin{section}{Introduction} 
\begin{subsection}{Reformulation of the problem of resolution of singularities} 
The problem of resolution of singularities, i.e., the problem of finding, given an algebraic variety $X$, a proper birational morphism $X \overset{\pi}\leftarrow \widetilde{X}$ from a nonsingular variety $\widetilde{X}$, is reduced, via the formulation of embedded resolution of singularities, to the following problem of resolution of singularities of a basic object (cf.~\cite{EV}\cite{KM2}\cite{W}): Given a basic object, i.e., the triplet $(W,({\mathcal I},a),E)$ consisting of a nonsingular variety called the ambient space $W$ over a base field $k$, the pair $({\mathcal I},a)$ of a (nonzero) coherent sheaf of ideals ${\mathcal I} \subset {\mathcal O}_W$ and a positive integer $a \in {\mathbb Z}_{> 0}$, and a simple normal crossing divisor $E$ on $W$, construct a sequence of transformations 
\begin{align*} 
\lefteqn{ 
(W,({\mathcal I},a),E) = (W_0,({\mathcal I}_0,a),E_0) \longleftarrow \cdots 
}\\ 
& 
\phantom{ 
(W,({\mathcal I},a),E) 
} 
(W_i,({\mathcal I}_i,a),E_i)  \overset{\pi_{i+1}}\longleftarrow (W_{i+1},({\mathcal I}_{i+1},a),E_{i+1}) \cdots \longleftarrow (W_l,({\mathcal I}_l,a),E_l) 
\end{align*} 
such that $\mathrm{Sing}({\mathcal I}_l,a) = \emptyset$.  Note that the singular locus of the pair of a sheaf of ideals and a positive integer  is defined to be $\mathrm{Sing}({\mathcal I},a) := \{P \in W \mid \mathrm{ord}_P({\mathcal I}) \geq a\}$, where $\mathrm{ord}_P({\mathcal I}) 
= \sup\left\{n \in {\mathbb Z}_{\geq 0} \mid {\mathcal I}_P \subset {\mathfrak m}_P^n\right\}$. 
For the precise definition of a transformation $\pi_{i+1}$, we refer the reader to \cite{EV}\cite{KM2}\cite{W}. 
\end{subsection} 
\begin{subsection}{Solution in characteristic zero}\label{1.2} 
In characteristic zero $\mathrm{char}(k) = 0$, the above problem is solved as follows: in year $i$ of the resolution sequence, we compute the long chain of invariants, defined as a function on $\mathrm{Sing}({\mathcal I}_i,a)$, consisting of the units of the form $(\dim,\text{w-ord},s)$ 
\begin{align*} 
\left({\mathrm{inv}_{\mathrm{classic}}}\right)_i =& (\dim H_i^0,\text{w-ord}_i^0,s_i^0)(\dim H_i^1,\text{w-ord}_i^1,s_i^1) \cdots \\ 
& (\dim H_i^j, \text{w-ord}_i^j,s_i^j) \cdots (\dim H_i^{m-1}, \text{w-ord}_i^{m-1},s_i^{m-1}) \\ 
& 
\begin{cases} 
(\dim H_i^m, \mathrm{w\text{-}ord}_i^m = \infty) \quad\text{or} 
\\ 
(\dim H_i^m, \mathrm{w\text{-}ord}_i^m = 0, \Gamma). 
\end{cases} 
\end{align*} 
Note that, while the subscript ``$i$'' refers to the year proceeding vertically in the resolution sequence, the superscript ``$j$'' refers to the stage in a fixed year proceeding horizontally.  Note also that there is no third factor ``$s$'' in the last unit.  Along with the computation of the long chain of invariants, we simultaneously construct the consecutive modifications 
\begin{align*} 
(H_i^0,({\mathcal J}_i^0,b_i^0),F_i^0), 
\dotsc, (H_i^j,({\mathcal J}_i^j,b_i^j),F_i^{j}), \dotsc, 
(H_i^m,({\mathcal J}_i^m,b_i^m),F_i^m), 
\end{align*} 
where the $H_i^j$'s are the so-called hypersurfaces of maximal contact, starting with $H_i^0 = W_i$.  We choose the center $C_i$ of blow up for the transformation $\pi_{i+1}$ to be the maximum locus of the long chain of invariants $\left({\mathrm{inv}_{\mathrm{classic}}}\right)_i$.  Depending upon the form of the last unit (at a point on the center), this choice of the center has the following local description. 
 
\smallskip 
 
(i) the $\infty$ case, i.e., when the last unit is of the form $(\dim H_i^m, \mathrm{w\text{-}ord}_i^m = \infty)$: In this case, the center $C_i$ is actually the last hypersurface of maximal contact, i.e., $C_i = H_i^m$.  After the transformation, the long chain of invariants strictly decreases, i.e., $\left({\mathrm{inv}_{\mathrm{classic}}}\right)_i > \left({\mathrm{inv}_{\mathrm{classic}}}\right)_{i+1}$. 
 
(ii) {the monomial case}, i.e., when the last unit is of the form $(\dim H_i^m, \mathrm{w\text{-}ord}_i^m = 0, \Gamma)$: In this case, what we have to do is to construct the resolution sequence for the basic object $(H_i^m,({\mathcal J}_i^m,b_i^m),F_i^m)$, which is in the monomial case.  As the name indicates, the ideal ${\mathcal J}_i^m$ on the last hypersurface of maximal contact $H_i^m$ is generated by a monomial of the defining ideals of the components of the boundary divisor $F_i^m$.  We compute the invariant 
$\Gamma$ associated to the monomial.  The center $C_i$ is the maximum locus of the invariant $\Gamma$ (on $H_i^m$), which is the intersection of some components of $F_i^m$.  After the transformation, the long chain of invariants strictly decreases, i.e., $\left({\mathrm{inv}_{\mathrm{classic}}}\right)_i > \left({\mathrm{inv}_{\mathrm{classic}}}\right)_{i+1}$. 
 
\smallskip 
 
Since the long chain of invariants cannot decrease infinitely many times, this process must come to an end after finitely many years, achieving resolution of singularities for the given basic object $(W,({\mathcal I},a),E)$ in characteristic zero. 
\end{subsection} 
\begin{subsection}{A further reformulation in the framework of the I.F.P.}\label{1.3} 
The problem of resolution of singularities of a basic object is further reformulated into the problem of resolution of singularities of an idealistic filtration according to the I.F.P. (See \cite{K}\cite{K2}\cite{KM1}\cite{KM2} for the detail.): Given an triplet $(W,{\mathcal R},E)$, where the pair $({\mathcal I},a)$ in a basic object $(W,({\mathcal I},a),E)$ in the classical setting is replaced by an idealistic filtration ${\mathcal R}$, construct a sequence of transformations 
\begin{align*} 
(W,{\mathcal R},E) = &(W_0,{\mathcal R}_0,E_0) \longleftarrow \cdots  \\ 
&(W_i,{\mathcal R}_i,E_i)  \overset{\pi_{i+1}}\longleftarrow (W_{i+1},{\mathcal R}_{i+1},E_{i+1}) \cdots \longleftarrow (W_l,{\mathcal R}_l,E_l) 
\end{align*} 
such that $\mathrm{Sing}({\mathcal R}_l) = \emptyset$.  Note that an idealistic filtration (of i.f.g. type) is a finitely generated graded ${\mathcal O}_W$-algebra ${\mathcal R} = \bigoplus_{n \in {\mathbb Z}_{\geq 0}}({\mathcal I}_n,n)$, satisfying the condition ${\mathcal O}_W = {\mathcal I}_0 \supset {\mathcal I}_1 \supset {\mathcal I}_2 \cdots  \supset {\mathcal I}_n \supset \cdots$, where ``$n$'' in the second factor specifies the ``level'' of the ideal ${\mathcal I}_n$ in the first factor, and that the singular locus of an idealistic filtration is defined to be $\mathrm{Sing}({\mathcal R}) := \{P \in W \mid \mathrm{ord}_P({\mathcal I}_n) 
\geq n,\ \forall n \in {\mathbb Z}_{\geq 0}\}$.  For the precise definition of a transformation $\pi_{i+1}$, we refer the reader to \cite{KM2}. 
 
We remark that the problem of resolution of singularities of a basic object $(W,({\mathcal I},a),E)$ is reduced to the problem of resolution of singularities of an idealistic filtration $(W,{\mathcal R},E)$ if we set ${\mathcal R} = \bigoplus_{n \in {\mathbb Z}_{\geq 0}}({\mathcal I}^{\lceil \frac{n}{a} \rceil},n)$. 
 
We also remark that what we actually discuss in this paper is the following local version of the above problem (as we discussed only the local version in our previous paper \cite{KM2}): Starting from a closed point $P \in \mathrm{Sing}({\mathcal R}) \subset W$ and its neighborhood, we have a sequence of closed  points and their neighborhoods 
\begin{align*} 
&P\phantom{\hspace{0pt}_0} \in \mathrm{Sing}({\mathcal R}) 
\phantom{\hspace{0pt}_0} \subset W \\ 
&\phantom{P_0 \in \mathrm{Si}}\| 
\\ 
&P_0 \in \mathrm{Sing}({\mathcal R}_0) \subset W_0 \longleftarrow P_1 \in \mathrm{Sing}({\mathcal R}_1) \subset W_1 \longleftarrow \cdots \longleftarrow P_i \in \mathrm{Sing}({\mathcal R}_i) \subset W_i 
\end{align*} 
in the resolution sequence.  After we choose the center $P_i \in C_i \subset \mathrm{Sing}({\mathcal R}_i) \subset W_i$ and take the corresponding transformation $W_i \overset{\pi_{i+1}}\longleftarrow W_{i+1}$ to extend the resolution sequence, the ``devil" tries to choose a closed point $P_{i+1} \in \pi_i^{-1}(P_i) \cap  \mathrm{Sing}({\mathcal R}_{i+1}) \subset W_i$.  If $\pi_i^{-1}(P_i) \cap  \mathrm{Sing}({\mathcal R}_{i+1}) = \emptyset$, then the devil cannot choose a closed point and he loses the game.  If $\pi_i^{-1}(P_i) \cap  \mathrm{Sing}({\mathcal R}_{i+1}) \neq \emptyset$, then the devil chooses a closed point $P_{i+1} \in \pi_i^{-1}(P_i) \cap  \mathrm{Sing}({\mathcal R}_{i+1}) \subset W_i$ and the game continues.  Our task is to provide a prescription on how to choose the center each year so that, no matter how the devil makes his choice, he will end up losing.  That is to say, the prescription should guarantee that we ultimately reach year $i = l-1$ so that, with the choice of the center $C_{l-1}$, we have $\pi_l^{-1}(P_{l-1}) \cap \mathrm{Sing}({\mathcal R}_l) = \emptyset$ after the blow up. 
\end{subsection} 
\begin{subsection}{Our approach in positive characteristic}\label{1.4} 
In positive characteristic $\mathrm{char}(k) = p > 0$, our approach tries to provide a solution to the above problem as follows (Note that our approach is also valid in characteristic zero, where it is indeed complete yielding a slightly different algorithm from the classical one 
discussed in \ref{1.2}.): 
in year $i$ of the resolution sequence (locally constructed as described in the local version of the problem), we compute the long chain of invariants at the closed point $P_i \in \mathrm{Sing}({\mathcal R}_i) \subset W_i$, consisting of the units of the form $(\sigma, \widetilde{\mu},s)$ 
\begin{align*} 
\left({\mathrm{inv}_{\mathrm{new}}}\right)_i =& (\sigma_i^0,\widetilde{\mu}_i^0,s_i^0)(\sigma_i^1,\widetilde{\mu}_i^1,s_i^1) \cdots 
(\sigma_i^j, \widetilde{\mu}_i^j,s_i^j) \cdots (\sigma_i^{m-1}, \widetilde{\mu}_i^{m-1},s_i^{m-1}) \\ 
& (\sigma_i^m, \widetilde{\mu}_i^m, s_i^m) = 
\begin{cases} 
(\sigma_i^m,\widetilde{\mu}_i^m = \infty,0) \quad\text{or} 
\\ 
(\sigma_i^m,\widetilde{\mu}_i^m = 0,0). 
\end{cases} 
\end{align*} 
Note that there is the third factor $s_i^m = 0$ in the last unit, in contrast to the classical long chain of invariants 
$\left(\mathrm{inv}_{\mathrm{classic}}\right)_i$. 
Along with the computation of the long chain of invariants, we simultaneously construct the consecutive modifications 
$$ 
(W_i^0,{\mathcal R}_i^0,E_i^0), (W_i^1,{\mathcal R}_i^1,E_i^1), \dotsc, (W_i^j,{\mathcal R}_i^j,E_i^j), \dotsc 
, 
(W_i^m,{\mathcal R}_i^m,E_i^m), 
$$ 
where actually all the ambient spaces remain the same, i.e., we have 
$$ 
W_i = W_i^0 = W_i^1 = \cdots = W_i^j = W_i^{j+1} = \cdots = W_i^{m-1} = W_i^m. 
$$ 
 
Depending upon the form of the last unit, there are two cases to consider. 
\begin{itemize} 
\setlength{\parskip}{0pt} 
\setlength{\itemsep}{0pt} 
\item[(i)] 
the $\infty$ case, i.e., when the last unit is of the form $(\sigma_i^m,\widetilde{\mu}_i^m = \infty,0)$: In this case, we take the center $C_i$ to be the singular locus of the last modified idealistic filtration ${\mathcal R}_i^m$, i.e., $C_i = \mathrm{Sing}({\mathcal R}_i^m)$.  After the transformation, the long chain of invariants strictly decreases, i.e., $\left({\mathrm{inv}_{\mathrm{new}}}\right)_i > \left({\mathrm{inv}_{\mathrm{new}}}\right)_{i+1}$.  The analysis of this case is complete. 
\item[(ii)] 
the monomial case, i.e., when the last unit is of the form $(\sigma_i^m,\widetilde{\mu}_i^m = 0,0)$: In this case, what we have to do is to construct the resolution sequence for the last modification $(W_i^m,{\mathcal R}_i^m,E_i^m)$, which is in the monomial case in our setting.  Roughly speaking, the idealistic filtration ${\mathcal R}_i^m$ in the monomial case is generated by the elements of an L.G.S. 
(cf.~\ref{1.5} below) 
and a monomial, at a certain level, of the defining ideals of the components of the boundary divisor $E_i^m$.  Unlike the monomial case in the classical setting, however, we \emph{cannot} 
determine the center $C_i$ simply by looking at the monomial.  The analysis of the monomial case in our setting in positive characteristic is subtle and difficult.  \emph{How to choose the center of blow up and how to detect  effectively what is improved after each transformation in the monomial case is the central issue of this paper.}  Once the resolution sequence for $(W_i^m,{\mathcal R}_i^m,E_i^m)$ is complete (or in the middle of the process of constructing the resolution sequence), the long chain of invariants strictly decreases, i.e., $\left({\mathrm{inv}_{\mathrm{new}}}\right)_i > \left({\mathrm{inv}_{\mathrm{new}}}\right)_{i'}$ for some $i' > i$. 
\end{itemize} 
Since the long chain of invariants cannot decrease infinitely many times, this process must come to an end after finitely many years, and should achieve resolution of singularities for the given triplet $(W,{\mathcal R},E)$ in positive characteristic. 
 
\smallskip 
 
The remaining task in order for us to complete our approach is to establish resolution of singularities in the monomial case. 
\end{subsection} 
\begin{subsection}{Strategy in the monomial case}\label{1.5} 
Here we outline the strategy to establish resolution of singularities for the triplet $(W,{\mathcal R},E)$ in the monomial case.  (Strictly speaking, the triplet in the monomial case only appears as the last modification $(W_i^m,{\mathcal R}_i^m,E_i^m)$ in the process of computing the long chain of invariants 
as described in \ref{1.4}. 
But we suppress the subscript ``$i$'' and the superscript ``$m$'' for the sake of simplicity.) 
 
\smallskip 
 
The strategy is divided into the following five steps: 
 
\smallskip 
 
\noindent \textbf{5 steps of the strategy in the monomial case.} 
\begin{itemize} 
\setlength{\parskip}{0pt} 
\setlength{\itemsep}{0pt} 
\item[(1)] Description of the precise setting for the monomial case. 
\item[(2)] Inductive scheme in terms of the invariant $\tau$. 
\item[(3)] Analysis of the tight monomial case (when $\tau = 1$, in arbitrary dimension). 
\item[(4)] Introduction of the new invariant and study of its behavior under 
transformations ($\tau = 1$, in dimension 3). 
\item[(5)] Analysis of the jumping 
phenomenon and eventual decrease ($\tau = 1$, 
in dimension 3). 
\end{itemize} 
 
Now we explain each step of the above strategy more in detail. 
 
\noindent (1) \textbf{Setting}: In section 2, we give the precise description of the setting of the monomial case.  Note that we discuss only the local version of the problem, and hence that our analysis is carried out locally in a neighborhood of a closed point $P \in \mathrm{Sing}({\mathcal R}) \subset W$.  The global version will be discussed elsewhere. 
 
\noindent (2) \textbf{Invariant $\tau$}: The first invariant we compute is the invariant $\tau$, which is just the number of the elements in the Leading Generator System, which plays the role of a collective substitute in the I.F.P. for the notion of a hypersurface of maximal contact in the classical setting.  We explain the inductive scheme in terms of the invariant $\tau$, which reduces our analysis to the case $\tau = 1$, i.e., to the case where there is only one element in the L.G.S.  This confirms the folklore, in our setting, that the essential case to consider in the resolution problem is the case of a hypersurface singularity, i.e., a singularity defined by one equation. 
 
After this point, we concentrate ourselves on the case where $\tau = 1$. 
 
Our idealistic filtration ${\mathcal R}$ is, roughly speaking, generated by the L.G.S. ${\mathbb H} = \{(h,p^e)\}$ consisting of a unique element, and a monomial $(M,a)$ at level ``$a$''.  Via the Weierstrass preparation theorem, the element $h$ is in the following form with respect to a regular system of parameters $(x_1, x_2, \ldots, x_d)$ 
$$h = x_1^{p^e} + a_1x_1^{p^e-1} + \cdots + a_{p^e-1}x_1 + a_{p^e}$$ 
with $a_i \in k[[x_2, \ldots, x_d]]$ and $\mathrm{ord}_P(a_i) > i$ for $i = 1, \ldots, p^e - 1, p^e$.  The central issue turns out to be how to control the last coefficient $a_{p^e}$, as the other coefficients $a_i\ (i = 1, \ldots, p^e - 1)$ are {\it well-controlled} 
(See \ref{4.1} for the precise meaning.). 
 
\textbf{Cleaning}: For the purpose of controlling $a_{p^e}$, we look at its order $\mathrm{ord}_P(a_{p^e})$. 
However, this number depends on the choice of a regular system of parameters. 
For example, if the initial form $\mathrm{In}(a_{p^e})$ is a $p^e$-th power, we may replace the original $x_1$ with $x_1 + \{\mathrm{In}(a_{p^e})\}^{1/p^e}$ and the order increases.  The process of eliminating this ambiguity and making the order well-defined is called 
\emph{cleaning}. 
 
\textbf{Invariant ${\mathrm H}$}: After cleaning and taking the information on the monomial also into consideration, we define the invariant 
${\mathrm H}$ (after Hironaka) 
$$\mathrm{H}(P) := \min\left\{\mathrm{ord}_P(a_{p^e})/p^e, \mathrm{ord}_P(M_{\mathrm{usual}})\right\}.$$ 
(3) \textbf{Tight Monomial Case}: Using the invariant $\mathrm{H}$, we can also define the \it tight \rm monomial $M_{\mathrm{tight}}$.  In general, we have $\mathrm{H}(P) \geq \mathrm{ord}_P(M_{\mathrm{tight}})$.  When the equality $\mathrm{H}(P) = \mathrm{ord}_P(M_{\mathrm{tight}})$ holds, we say that we are in the tight monomial case, following the terminology initiated by Villamayor.  We show in section 4 that, if we are in the tight monomial case, then we can easily accomplish resolution of singularities by computing the invariant  $\Gamma_{\mathrm{tight}}$ and by following the classical procedure. 
 
\smallskip 
 
\noindent \fbox{\textbf{Villamayor's philosophy}} 
 
\smallskip 
 
The strategy for resolution of singularities can therefore be summarized symbolically as follows: 
 
\medskip 
 
\begin{tabular}{rl} 
\textbf{{general case}} 
$\to$& 
\textbf{{monomial case}} 
$\to$ 
\textbf{{monomial case with $\tau=1$}} 
\\ 
$\to$& 
\textbf{{tight monomial case (with $\tau=1$)}} 
$\to$ 
\textbf{{finish.}} 
\end{tabular} 
 
\medskip 
 
\noindent 
The above argument establishes all the procedures 
\emph{except} for the third arrow. 
 
The final remaining task, therefore, is to establish the procedure for the third arrow.  That is to say, we have to establish the procedure to reach the tight monomial case, after reaching the monomial case with $\tau = 1$. 
 
\smallskip 
 
\noindent (4) \textbf{The new invariant $\mathrm{inv}_{\mathrm{MON,\spadesuit}}$}: 
In order to accomplish the final task, we introduce the new invariant 
$\mathrm{inv}_{\mathrm{MON,\spadesuit}}$, which should measure how far we are from the tight monomial case.  A more naive version, the invariant $\mathrm{inv}_{\mathrm{MON,\heartsuit}}(P)$ is defined to be $\mathrm{inv}_{\mathrm{MON,\heartsuit}}(P) = \mathrm{H}(P) - \mathrm{ord}_P(M_{\mathrm{tight}})$, 
which is only natural when we consider the definitions of the invariant ${\mathrm H}$ and the tight monomial case.  We are in the tight monomial case if and only if $\mathrm{inv}_{\mathrm{MON,\heartsuit}}(P) =0$.  Our new invariant $\mathrm{inv}_{\mathrm{MON,\spadesuit}}(P)$ incorporates the information extracted from the Newton polygon associated to the last coefficient $a_{p^e}$.  It is more sensitive to and indicative of what is improved under some transformations than the naive version 
$\mathrm{inv}_{\mathrm{MON,\heartsuit}}$. 
Therefore, it is better suited for the purpose of showing the termination of the algorithm effectively. 
 
Our ultimate goal is to bring this new invariant down to zero, where our algorithm terminates and we are in the tight monomial case.  (We have the inequality $\mathrm{inv}_{\mathrm{MON,\spadesuit}}(P) \geq \mathrm{inv}_{\mathrm{MON,\heartsuit}}(P) \geq 0$ in general, and hence $\mathrm{inv}_{\mathrm{MON,\spadesuit}}(P) = 0$ implies $\mathrm{inv}_{\mathrm{MON,\heartsuit}}(P) = 0$.) 
 
In dimension 3, we prescribe an algorithm to reach the tight monomial case, by analyzing the singular locus of the idealistic filtration. 
 
Our expectation is that the new invariant 
$\mathrm{inv}_{\mathrm{MON,\spadesuit}}$ 
strictly decreases under each transformation prescribed by the algorithm, and hence that it effectively shows the termination of the algorithm.  When we actually analyze the behavior of the invariant 
$\mathrm{inv}_{\mathrm{MON,\spadesuit}}$ in section 5, 
we find that this expectation is met in most of the cases.  However, as our study in section 6 shows, the invariant 
$\mathrm{inv}_{\mathrm{MON,\spadesuit}}$ 
strictly increases from time to time in some special cases !  That is to say, we rediscover the famous ``Moh-Hauser jumping 
phenomenon'' in our setting. 
 
\noindent (5) \textbf{Moh-Hauser jumping 
phenomenon and Eventual Decrease}: In section 6, we analyze the ``Moh-Hauser jumping 
phenomenon'' more in detail.  Then as a consequence of this analysis, we show that, when the invariant 
$\mathrm{inv}_{\mathrm{MON,\spadesuit}}$ strictly increases, 
it eventually decreases after some more transformations 
to a value lower than the original one.  This \emph{eventual decrease} is enough to guarantee that our algorithm terminates after finitely many times.  Our argument is an extension of the one given by Moh and Hauser, but it seems to be new in the sense that we remove the restriction on the power of the leading term of the monic polynomial $h$ used by Hauser, and we carry out the entire argument in the monomial case to show the eventual decrease, which can be considered as the ``Moh's stability theorem'' in our setting. 
 
In the last section 7 of our paper, we make a brief comparison of the new invariant 
$\mathrm{inv}_{\mathrm{MON,\spadesuit}}$ with the old one 
$\mathrm{inv}_{\text{MON}}$ in our previous paper \cite{KM2}. 
We also mention why we think that the new invariant 
$\mathrm{inv}_{\mathrm{MON,\spadesuit}}$ has 
a better fighting chance in higher dimensions than the previous one 
$\mathrm{inv}_{\text{MON}}$. 
 
\smallskip 
 
This finishes the description of the outline of our paper. 
\end{subsection} 
\begin{subsection}{Assumption on the base field} 
In this paper, we always assume that the base filed $k$ is an algebraically closed field of characteristic zero $\mathrm{char}(k) = 0$ or of positive characteristic $\mathrm{char}(k) = p > 0$ for simplicity, even though our algorithm can be shown to be valid over any perfect field by the argument of Galois descent. 
\end{subsection} 
\end{section} 
\begin{section}{Description of the precise setting for the monomial case}\label{2.1} 
We give the precise description of the setting for the triplet $(W,{\mathcal R},E)$ in the monomial case at a closed point $P \in \mathrm{Sing}({\mathcal R}) \subset W$.  Note that, while the description is given at the analytic level (i.e. at the level of completion) and the invariants are computed also at the analytic level, the center is chosen at the algebraic level and hence the procedure of the algorithm is carried out at the algebraic level. 
 
\smallskip 
 
\noindent \fbox{\textbf{Setting}} 
 
\smallskip 
 
There exists a regular system of parameters $X = (x_1, \ldots, x_t, x_{t+1}, \ldots, x_d)$ taken from $\widehat{{\mathfrak m}_P} \subset \widehat{{\mathcal O}_{W,P}}$ satisfying the following conditions. 
\begin{itemize} 
\item[1.] 
The elements of the L.G.S. ${\mathbb H} = \{(h_{\alpha},p^{e_{\alpha}})\}_{\alpha = 1}^t \subset \widehat{{\mathcal R}_P} := {\mathcal R}_P \otimes_{{\mathcal O}_{W,P}} \widehat{{\mathcal O}_{W,P}}$, with $0 \leq e_1 \leq \cdots \leq e_t$, satisfy the equations $h_{\alpha} = x_{\alpha}^{p^{\alpha}} \hskip.05in \text{mod} \hskip.05in \widehat{{\mathfrak m}_P}^{p^{e_{\alpha}} + 1}$ for $\alpha = 1, \ldots, t$. 
\item[2.] 
There exist a monomial $M = \prod_{D \in E_{\mathrm{young}}}x_D^{m_D}$, 
where $E_{\mathrm{young}}$ is a subset of $E$ (See \cite{KM2} for the definition of $E_{\mathrm{young}}$.) and where the defining equation $x_D$ of an irreducible component $D \in E_{\mathrm{young}}$ coincides with one of $(x_{t+1}, \ldots, x_d)$, and a positive integer $a \in {\mathbb Z}_{> 0}$ such that $(M,a) \in \widehat{{\mathcal R}_P}$ with $\sum m_D > a$. 
\item[3.] 
For any element $(f,\lambda) \in \widehat{{\mathcal R}_P}$ with $f = \sum c_{f,B}H^B$ being the power series expansion of $f$ with respect to ${\mathbb H}$ and $X$ (cf.~\cite{KM1}\cite{KM2}), we have $(M^{1/a})^{\lambda} \mid  c_{f,{\mathbb O}}$.  That is to say, we have $c_{f,{\mathbb O}}/x_D^{\lceil m_D \cdot \lambda/a \rceil} \in \widehat{{\mathcal O}_{W,P}}$ for any $D \in E_{\mathrm{young}}$.  Since $(c_{f,B}, \max\{0,\lambda - |[B]|\}) \in \widehat{{\mathcal R}_P}$ by the formal coefficient lemma (cf.~\cite{KM1}\cite{KM2}) and since $(c_{f,B})_{\mathbb O} = c_{f,B}$, we have in general $(M^{1/a})^{\max\{0,\lambda - |[B]|\}} \mid  c_{f,B}$. 
\item[4.] 
The idealistic filtration $\widehat{{\mathcal R}_P}$ is closed under the partial differentiations with respect to $(x_1, \ldots, x_t)$.  That is to say, we have the implication $(f,\lambda) \in \widehat{{\mathcal R}_P} \Longrightarrow 
\left(\frac{\partial^n}{\partial x_{\alpha}^n}f, \max\{0,\lambda - n\}\right) \in \widehat{{\mathcal R}_P}$ 
for any $\alpha = 1, \ldots, t$ and any $n \in {\mathbb Z}_{\geq 0}$.  Note that, when we write $\frac{\partial^n}{\partial x_{\alpha}^n}$, we include the partial derivatives $\frac{\partial^{p^e}}{\partial x_{\alpha}^{p^e}}$ of Hasse-Weil type with $\frac{\partial^{p^e}}{\partial x_{\alpha}^{p^e}}(x_{\alpha}^{p^e}) = 1$. 
\end{itemize} 
\begin{art_rem} 
\item[(1)] 
When the idealistic filtration $(W,{\mathcal R}, E)$ in the monomial case appears as the last modification $(W_i^m,{\mathcal R}_i^m,E_i^m)$ in the process of computing the long chain of invariants, the corresponding last unit is of the form $(\sigma_i^m,\widetilde{\mu}_i^m,s_i^m) = (\sigma_i^m,0,0)$.  Since $s = s_i^m$ is the number of the components of $E_{\text{aged}} = E \setminus E_{\mathrm{young}}$ passing through the point $P$, there are actually no components of $E_{\text{aged}}$ but only those of $E_{\mathrm{young}}$ locally around $P$. 
 
\item[(2)] 
We remark that condition 3 of the setting above is the exact meaning of the statement 
(cf.~\ref{1.4} (ii)) 
``Roughly speaking,  in our monomial case, the idealistic filtration ${\mathcal R}_P$ is generated by the elements of an L.G.S. ${\mathbb H}$ and a monomial $M$.'' 
 
\item[(3)] 
In our algorithm, one cannot expect in general to have the idealistic filtration $\widehat{{\mathcal R}_P}$ in the monomial case to be ${\mathcal D}$-saturated, ${\mathcal D}_E$-saturated, or ${\mathcal D}_{E_{\mathrm{young}}}$-saturated, since we only take the pull-back of the idealistic filtration under transformation, without taking any further saturation, when the invariant $\sigma$ stays the same (cf.~\cite{KM2}).  It seems that the partial saturation as described in condition 4 in the setting is the best we can hope for in our current algorithm.  Nevertheless, we remark that condition 4 puts very strong restrictions on the idealistic filtration $\widehat{{\mathcal R}_P}$ in the monomial case. 
\end{art_rem} 
\end{section} 
\begin{section}{Inductive scheme in terms of the invariant $\tau$} 
\begin{subsection}{Invariant $\tau$} 
The first invariant we look at is the invariant $\tau$, which is just the number of the elements in the L.G.S., i.e., $0 \leq \tau = \# {\mathbb H} = t \leq d = \dim W$. 
\end{subsection} 
\begin{subsection}{Inductive scheme in terms of $\tau$} 
We summarize, in the table below, the procedure for resolution of singularities in the monomial case, according to the value of the invariant $\tau$ (cf.~\cite{KM2}). 
 
\smallskip 
 
\noindent \textbf{Table of the procedures according to the value of the invariant $\tau$.} 
 
\smallskip 
 
$\boxed{\tau = 0}$ In this case, the problem of resolution of singularities for the triplet $(W,{\mathcal R},E)$ is reduced to the problem of resolution of singularities for the basic object $(W, ((M),a), E)$, which is in the monomial case in the classical sense.  For the latter, we can just compute the invariant $\Gamma$, and carry out the procedure for resolution of singularities accordingly.  (In the middle of the procedure, the value of the invariant $\sigma$ may decrease for the transformation of the triplet $(W,{\mathcal R},E)$.  Then we go back to the reduction step 
``\textbf{general case $\to$ monomial case}'' with the decreased value of the invariant $\sigma$.) 
 
$\boxed{\tau = 1}$ This is the \textit{most difficult} case. 
 
$\boxed{\tau = j}$ $(j = 2, \ldots, d-1)$ The analysis for this case is similar to the one for the case where $\dim = d - 1 \hskip.03in \& \hskip.03in \tau = j - 1$.  Therefore, we can use the induction on dimension to analyze the case. 
 
$\boxed{\tau = d}$ This case does \emph{not} happen. 
 
\smallskip 
 
Therefore, according to the inductive scheme on dimension associated to the value of the invariant $\tau$ described as above, what remains is the task of figuring out the resolution procedure in the case $\tau = 1$, i.e., the case where there is a unique element in the L.G.S. 
 
\smallskip 
 
The ``similarity'' between the analysis for the case $\dim = d \hskip.03in \& \hskip.03in \tau = j$ where $j = 2, \ldots, d-1$ and the one for the case $\dim = d - 1 \hskip.03in \& \hskip.03in \tau = j - 1$ is straightforward, but not trivial.  It requires some argument.  We publish the detail of the argument for the similarity, e.g., between the analysis for the case $\dim = 4 \hskip.03in \& \hskip.03in j = 2$ and the one for the case $\dim = 4 -1 = 3 \hskip.03in \& \hskip.03in \tau = 2 - 1 = 1$ elsewhere. 
 
\begin{art_rem} 
\item[(1)] 
Our analysis of the algorithm is, after all, reduced to the case where there is only one element in the L.G.S. discussed as above.  This seems to confirm, in our setting, the folklore, which says: in order to solve the problem of resolution of singularities, the ``essential'' case is the case of a hypersurface singularity, i.e., a singularity defined by only one equation. 
\item[(2)] 
In positive characteristic, the case where $\dim = d \hskip.03in \& \hskip.03in \tau = 1$ is \it not \rm reduced to the case where $\dim = d - 1 \hskip.03in \& \hskip.03in \tau = 1 -1$.  Even after we reach the monomial case, the analysis of the case $\dim = d \hskip.03in \& \hskip.03in \tau = 1$ remains as a challenge, while the other cases with $\tau > 1$ are reduced to the lower dimensional ones.  In characteristic zero, however, the case where $\dim = d \hskip.03in \& \hskip.03in \tau = 1$ is \it indeed \rm reduced to the case where $\dim = d-1 \hskip.03in \& \hskip.03in \tau = 1 -1 = 0$.  When the value of the invariant $\tau = 0$, we achieve resolution of singularities easily by using the invariant $\Gamma$ as in the classical setting.  Therefore, our algorithm is complete in characteristic zero by induction on dimension. 
\item[(3)] 
We refer the reader to 5.1 in \cite{KM2} for a more detailed case analysis according to the value of the invariant $\tau$ in dimension $d = 3$. 
\end{art_rem} 
 
\textit{In the rest of this paper, we concentrate ourselves on the case where the invariant $\tau = 1$.} 
\end{subsection} 
\end{section} 
\begin{section}{Analysis of the tight monomial case} 
\begin{subsection}{A further analysis of the monomial case with $\tau = 1$}\label{4.1} 
Note first that, since we are in the monomial case with $\tau = 1$, there is a unique element in the L.G.S. ${\mathbb H} = \{(h,p^e)\}$. 
 
\smallskip 
 
\noindent (\textbf{Weierstrass Form}) Via the Weierstrass Preparation Theorem, we may further assume that $h$ is of the following form 
$$h = x_1^{p^e} + a_1x_1^{p^e - 1} + a_2x_1^{p^e - 2} + \cdots + a_{p^e - 1}x_1 + a_{p^e},$$ 
with $a_i \in k[[x_2, \ldots, x_d]]$ and $\mathrm{ord}_P(a_i) > i$ for  $i = 1, \ldots, p^e - 1, p^e$. 
 
\smallskip 
 
\noindent (\textbf{Control over the coefficients $a_i$ for $i = 1, \ldots, p^e - 1$}) 
We observe that the coefficients $a_i \hskip.03in (i = 1, \ldots, p^e - 1)$ are well-controlled in the sense that $(M^{1/a})^i$ divides $a_i$ for $i = 1, \ldots, p^e - 1$, where $(M,a) \in \widehat{{\mathcal R}_P}$ is the monomial 
$M = \prod_{D \in E_{\mathrm{young}}}x_D^{m_D}$ at level $a > 0$ described in condition  2 of the setting.  That is to say, we have 
$x_D^{\lceil m_D \cdot i/a\rceil} \mid  a_i\ \forall D \in E_{\mathrm{young}}$ 
 for $i = 1, \ldots, p^e - 1$.  This can be seen easily from condition 3 of the setting, if one considers the fact that $(\frac{\partial^i}{\partial x_1^i}h, p^e - i) \in \widehat{{\mathcal R}_P}$ for $i = 1, \ldots, p^e - 1$, which follows from condition 4 of the setting, and that the constant term $c_{g_i,{\mathbb O}}$ of $g_i = \frac{\partial^i}{\partial x_1^i}h$, in the power series expansion with respect to $X$ and ${\mathbb H}$, is $g_i$ itself, i.e., $c_{g_i,{\mathbb O}} = g_i$. 
 
\smallskip 
 
\noindent (\textbf{Control over the last coefficient $a_{p^e}$}) \textit{ The central issue of the analysis of the monomial case with $\tau = 1$ is how to control the last coefficient $a_{p^e}$.} 
 
\smallskip 
 
\begin{art_rem} 
The control over the coefficients $a_i$ for $i = 1, \ldots, p^e - 1$ mentioned above allows us to ``ignore'' these coefficients in our analysis of the monomial case with $\tau = 1$, as if we were dealing with the hypersurface defined by the equation $x_1^{p^e} + a_{p^e} = 0$, which would provide a purely inseparable extension. 
\end{art_rem} 
\end{subsection} 
\begin{subsection}{Invariant $\mu$} 
We define the invariant $\mu$ by the formula 
$$\mu(P) := \mathrm{ord}_P(M)/a = \sum\nolimits_{D \in E_{\mathrm{young}}}m_D/a = \mathrm{ord}_P(M_{\mathrm{usual}}),$$ 
where $(M,a) \in \widehat{{\mathcal R}_P}$ is the monomial $M = \prod_{D \in E_{\mathrm{young}}}x_D^{m_D}$ at level $a > 0$ described in condition 2 of the setting.  (See Definition 2 for the definition of $M_{\mathrm{usual}}$.) 
For the generic point $\xi_D$ of a component $D \in E_{\mathrm{young}}$, we also define 
$$\mu(\xi_D) := \mathrm{ord}_{\xi_D}(M)/a = m_D/a = \mathrm{ord}_{\xi_D}(M_{\mathrm{usual}}).$$ 
\end{subsection} 
\begin{subsection}{Invariant $\mathrm{H}$}\label{4.3} 
We define the invariant $\mathrm{H}$, which sits at the heart of our analysis of the monomial case with $\tau = 1$, 
through the process of ``cleaning''. 
 
\begin{art_def} Let the setting be as described 
in \ref{2.1} and \ref{4.1} above. 
 
\smallskip 
 
\noindent(\textbf{Slope})\quad We define the slope of $h$ with respect to the regular system of parameters $X = (x_1, x_2, \ldots, x_d)$ by the formula 
$$ 
\mathrm{Slope}_{h,X}(P) 
:= \min\left\{\mathrm{ord}_P(a_{p^e})/p^e,\mu(P)\right\}. 
$$ 
Note that we have $\mathrm{Slope}_{h,X}(P) \leq\mathrm{ord}_P(a_i)/i$ 
for $i = 1, \ldots, p^e - 1$ 
because of the control over the coefficients $a_i$, which implies 
$\mathrm{ord}_P(a_i)/i \geq \mathrm{ord}_P(M_{\mathrm{usual}}) = \mu(P)$. 
 
\smallskip 
 
\noindent(\textbf{Well-adaptedness})\quad We say $h$ is well-adapted at $P$ with respect to $X$ if one of the following two conditions holds: 
\begin{itemize} 
\setlength{\parskip}{0pt} 
\setlength{\itemsep}{0pt} 
\item[A.] 
$\mathrm{Slope}_{h,X}(P) = \mu(P)$, or 
\item[B.] 
$\mathrm{Slope}_{h,X}(P) = \mathrm{ord}_P(a_{p^e})/p^e <\mu(P)$ and the initial form $\mathrm{In}_P(a_{p^e})$ is not a $p^e$-th power. 
\end{itemize} 
 
Similarly, we say $h$ is well-adapted at $\xi_{H_x}$ with respect to $X$, where $\xi_{H_x}$ is the generic point of the hypersurface $H_x = \{x = 0\}$ in $E_{\text{young}}$, if one of the following two conditions holds: 
\begin{itemize} 
\setlength{\parskip}{0pt} 
\setlength{\itemsep}{0pt} 
\item[A.] 
$\mathrm{Slope}_{h,X}(\xi_{H_x}) = \mu(\xi_{H_x}) = m_{H_x}/a$, or 
 \item[B.] 
$\mathrm{Slope}_{h,X}(\xi_{H_x}) = \mathrm{ord}_{\xi_{H_x}}(a_{p^e})/p^e <\mu(\xi_{H_x})$ and the initial form $\mathrm{In}_{\xi_{H_x}}(a_{p^e})$ is not a $p^e$-th power. 
\end{itemize} 
 
Note that if 
$$a_{p^e} = \sum\nolimits_{|I| \geq m}c_IX^I = x^r\{g_x + x \cdot \omega_x\},$$ 
where 
$$ 
c_I \in k, \quad 
g_x \in k[[x_2, \ldots, \overset{\vee}x, \ldots, x_d]]\setminus\{0\}, \quad 
\omega_x \in k[[x_2, \ldots, x_d]] 
$$ 
with $\overset{\vee}{x}$ indicating the omission of $x$, 
and where 
$$ 
m = \mathrm{ord}_P(a_{p^e}), \quad 
r = \mathrm{ord}_{\xi_{H_x}}(a_{p^e}), 
$$ 
then we have 
$$ 
\mathrm{In}_P(a_{p^e}) = \sum\nolimits_{|I| = m} c_IX^I, \quad 
\mathrm{In}_{\xi_{H_x}}(a_{p^e}) = x^r \cdot g_x. 
$$ 
\end{art_def} 
\begin{art_prop} 
\item[{\rm (1)}] 
\emph{(Cleaning makes the system well-adapted)}\quad 
Given the unique element $h$ (in the Weierstrass form) in the L.G.S. and a regular system of parameters $X = (x_1, x_2, \ldots, x_d)$ as described 
in \ref{4.1}, 
satisfying the conditions of the setting 
\ref{2.1}, 
we can apply the process of ``cleaning'' and find another regular system of parameters $X' = (x_1', x_2, \ldots, x_d)$ such that $h$ is well-adapted at $P$ with respect to $X'$.  (Note that the only difference between $X$ and $X'$ lies in the first coordinates, i.e., $x_1$ and $x_1'$.)  Moreover, by applying the process of cleaning further, we can find yet another regular system of parameters $X'' = (x_1'', x_2, \ldots, x_d)$ such that $h$ is well-adapted simultaneously at $P$ and at the generic points of all the components of $E_{young}$ passing through $P$ with respect to $X''$.  (Note that again the only difference lies in the first coordinates.)  We remark that the process of cleaning is carried out in such a way that $h \hskip.03in \& \hskip.03in X'$ (and also $h \hskip.03in \& \hskip.03in X''$) satisfy all the conditions of the setting described in 
\ref{2.1}. 
\item[{\rm (2)}] 
\emph{(Independence of the slope when well-adapted)}\quad 
If $h$ is well-adapted at $P$ (resp. at $\xi_{H_x}$) with respect to $X$, then $\mathrm{Slope}_{h,X}(P)$ (resp. $\mathrm{Slope}_{h,X}(\xi_{H_x})$) is independent of the choice of $h$ and $X$. 
\item[{\rm (3)}] 
\emph{(Independence of the residual order along a bad divisor when well-adapted)}\quad 
If $h$ is well-adapted at $\xi_{H_x}$ with respect to $X$ and $\mathrm{Slope}_{h,X}(\xi_{H_x}) < \mu(\xi_{H_x}) = m_{H_x}/a$, then 
$\mathrm{res\text{-}ord}^{(p^e)}_P(x^r \cdot g_x)$ 
is independent of the choice of $h \hskip.03in \& \hskip.03in X$.  Moreover, by applying the process of cleaning further, we may assume 
$\mathrm{res\text{-}ord}^{(p^e)}_P\left(x^r \cdot g_x\right) = \mathrm{ord}_P\left(x^r \cdot g_x\right)$. 
Note that 
$\mathrm{res\text{-}ord}^{(p^e)}_P$ 
is the lowest degree of the nonzero and non-$p^e$-th power terms appearing in the Taylor expansion. 
\end{art_prop} 
\begin{proof} We refer the reader to Propositions 5 and 7 in \cite{KM2} for the detail of a proof.  Note that the proof in \cite{KM2} is given in dimension $d = 3$, but the same proof works in arbitrary dimension.  Here we only make a quick remark about the process of ``cleaning'':  Suppose $h$ is not well-adapted at $P$ with respect to $X = (x_1, x_2, \ldots, x_d)$, i.e., we are not in Case A or Case B.  This is equivalent to saying that $\mathrm{Slope}_{h,X}(P) = \mathrm{ord}_P(a_{p^e})/p^e <\mu(P)$ and the initial form $\mathrm{In}_P(a_{p^e})$ is actually a $p^e$-th power.  We set $X^{\ast} = (x_1^{\ast}, x_2, \ldots, x_d)$ with $x_1^{\ast} = x_1 + \{\mathrm{In}_P(a_{p^e})\}^{1/p^e}$.  Then, using the control over the coefficients $(M^{1/a})^i \mid  a_i$ for $i = 1, \ldots, p^e - 1$,  we observe $\mathrm{ord}_P(a_{p^e})/p^e < \mathrm{ord}_P(a_{p^e}^{\ast})/p^e$.  Since the latter is bounded from above by $\mu(P)$ if $h$ is not well-adapted with respect to $X^{\ast}$, this process has to come to an end after finitely many repetitions with $h$ being well-adapted at $P$ with respect to $X' = (x_1', x_2, \ldots, x_d)$.  This is the process of cleaning. 
\end{proof} 
\begin{art_def} We define the invariant $\mathrm{H}$ by the formula 
$$\mathrm{H}(P) = \mathrm{Slope}_{h,X}(P)$$ 
where $h$ is well-adapted at $P$ with respect to $X$.  This is independent of the choice of $h$ and $X$ by Proposition 1 (2).  The invariant $\mathrm{H}(\xi_{H_x})$ is defined similarly. 
\end{art_def} 
\end{subsection} 
\begin{subsection}{Analysis of the tight monomial case} 
\begin{art_def} We define the tight monomial $M_{\mathrm{tight}}$ and the usual monomial $M_{\mathrm{usual}}$ by the formulas 
$$ 
M_{\mathrm{tight}} = \prod\nolimits_{D \in E_{\mathrm{young}}}x_D^{\mathrm{H}(\xi_D)},\quad 
M_{\mathrm{usual}} = M^{1/a} = \prod\nolimits_{D \in E_{\mathrm{young}}}x_D^{m_D/a}.\\ 
$$ 
Note that the powers $\mathrm{H}(\xi_D)$ in $M_{\mathrm{tight}}$ and $m_D/a$ in $M_{\mathrm{usual}}$ are rational numbers.  We say $M_{\mathrm{tight}}$ divides $M_{\mathrm{usual}}$, i.e., $M_{\mathrm{tight}} \mid  M_{\mathrm{usual}}$, since we have $\mathrm{H}(\xi_D) \leq \mu(\xi_D) = m_D/a \hskip.1in \forall D \in E_{\mathrm{young}}$ by definition. 
\end{art_def} 
 
It is convenient in our analysis to introduce the notion of a point (or a divisor) being 
``\textit{good\,/\,bad}'' 
according to its behavior in terms of the invariant $\mathrm{H}$. 
\begin{art_def} We say $P$ is a 
good (resp.~bad) point if $\mathrm{H}(P) = \mu(P)$ 
(resp.~$\mathrm{H}(P) < \mu(P)$). 
Similarly, we say 
$H_x$ is a good (resp.~bad) divisor if 
$\mathrm{H}(\xi_{H_x}) = \mu(\xi_{H_x})$ 
(resp.~$\mathrm{H}(\xi_{H_x}) < \mu(\xi_{H_x})$). 
\end{art_def} 
\begin{art_rem} 
It is straightforward to see that if we blow up a good (resp. bad) point, then the exceptional divisor is accordingly a good (resp. bad) divisor (cf.~Lemma 4 in \cite{KM2}). 
\end{art_rem} 
\begin{art_def} Let the setting be as described 
in \ref{2.1}, \ref{4.1} and \ref{4.3}. 
Suppose that the divisor $D = H_x = \{x = 0\}$ in $E_{\mathrm{young}}$ is bad.  Then we define the invariants $\rho_D(P)$ and $w\text{-}\rho_D(P)$ by the formulas 
$$\begin{cases} 
\phantom{w\text{-}}\rho_D(P) &=\ \mathrm{res\text{-}ord}^{(p^e)}_P\left(x^r \cdot g_x\right)/p^e - r/p^e,\\ 
w\text{-}\rho_D(P) &=\ \mathrm{res\text{-}ord}^{(p^e)}_P\left(x^r \cdot g_x\right)/p^e - \mathrm{ord}_P(M_{\mathrm{tight}}). 
\end{cases}$$ 
The invariants $\rho_D(P)$ and $w\text{-}\rho_D(P)$ are independent of the choice of $h$ and $X$ by Proposition 1 (2) and (3). 
\end{art_def} 
 
Now we give the definition of our new invariant, which plays the central role in our new strategy for resolution of singularities in the monomial case. 
\begin{art_def}\label{invs-def} 
\item[(1)] 
Firstly we define the invariant $\mathrm{inv}_{\mathrm{MON,\heartsuit}}$ 
by the following formula 
\begin{align*} 
\lefteqn{ 
\mathrm{inv}_{\mathrm{MON,\heartsuit}}(P) 
= \mathrm{H}(P) - \mathrm{ord}_P(M_{\mathrm{tight}}) = \mathrm{Slope}_{h,X}(P) - \mathrm{ord}_P(M_{\mathrm{tight}}) 
}\\ 
&\quad= \min\left\{\mathrm{ord}_P(a_{p^e})/p^e,\ \mu(P)\right\} - \mathrm{ord}_P(M_{\mathrm{tight}}) \\ 
&\quad= \min\left\{\mathrm{ord}_P(a_{p^e})/p^e - \mathrm{ord}_P(M_{\mathrm{tight}}),\ \mathrm{ord}_P(M_{\mathrm{usual}}) - \mathrm{ord}_P(M_{\mathrm{tight}})\right\}. 
\end{align*} 
\item[(2)] 
Secondly we define the invariant 
$\mathrm{inv}_{\mathrm{MON,\spadesuit}}$ 
by the following formula, which is more involved than the naive version above denoted by $\mathrm{inv}_{\mathrm{MON,\heartsuit}}$ 
$$ 
\mathrm{inv}_{\mathrm{MON,\spadesuit}}(P) = \mathrm{lex}\left\{A,B\right\}, 
$$ 
which needs the following explanations: 
 
\item[(i)] 
What we denote by $A$ is the ``word'' 
$$A = \mathrm{lex}\left(w\text{-}\rho_{x_D}(P)\mid D \text{ ranges over all 
\textit{bad} divisors }D \in E_{\mathrm{young}}\right)$$ 
consisting of the letters (numbers) $w\text{-}\rho_{x_D}(P)$'s with $D$ varying over all bad divisors  in $E_{\mathrm{young}}$ with the letters lined up from the smallest to the largest going from left to right.  When one of the letters $w\text{-}\rho_{x_D}(P)$'s is equal to $0$ (actually when this happens, all of the letters $w\text{-}\rho_{x_D}(P)$'s necessarily become $0$), we set $A = 0$.  When there is no bad divisor passing through the point $P$, we also set $A = 0$.  We give the lexicographical order to the set of words, where ``\ $0$'' is the smallest word in the lexicographical order. 
\item[(ii)] 
We set 
$$B = \mathrm{ord}_P(M_{\mathrm{usual}}) - \mathrm{ord}_P(M_{\mathrm{tight}}).$$ 
\item[(iii)] 
Now we have the two words $A$ and $B$.  By writing 
$\mathrm{lex}\left\{A,B\right\}$, 
we mean the pair of these two words lined up from the smallest to the largest going from left to right.  We give the lexicographical order to the set of pairs. 
 
When either one of the words is equal to $0$, i.e. either $A = 0$ or $B = 0$, we set $\mathrm{inv}_{\mathrm{MON,\spadesuit}}(P) = \mathrm{lex}\left\{A,B\right\} = 0$, where the ``\ $0$'' on the right hand side of the above equation is the smallest ``pair'' in the lexicographical order (and where the pair ``\ $0$'' is identified with the word ``\ $0$'' from time to time by abuse of notation). 
 
We also extend the lexicographical order to the union of the set of the pairs $\mathrm{lex}\{A,B\}$ and the set of words $C$: 
$$\mathrm{lex}\{A,B\} 
\begin{cases} 
> C &\text{ if } \min\{A,B\} > C \ \text{or}\ \min\{A,B\} = C \neq 0 \\ 
= C = 0 &\text{ if } \min\{A,B\} = C = 0 \\ 
< C &\text{ if } \min\{A,B\} < C. 
\end{cases} 
$$ 
Note that a pair of words and a word cannot be equal unless both are $0$. 
\end{art_def} 
 
We give the definition of the ``tight'' monomial case, following Villamayor.  (Note that we are already in the monomial case with $\tau = 1$.  So the tight monomial case is a special case of the monomial case with $\tau = 1$.) 
\begin{art_def}[{Tight Monomial Case}] We say we are in the tight monomial case if $\mathrm{inv}_{\mathrm{MON,\heartsuit}}(P) = 0$ (while it is presumed that we are in the monomial case with $\tau = 1$).  It is straightforward to see that this is equivalent to the condition $\mathrm{inv}_{\mathrm{MON,\spadesuit}}(P) = 0$.  That is to say, 
$$\text{Tight Monomial Case} \overset{\text{def}}\Longleftrightarrow \mathrm{inv}_{\mathrm{MON,\heartsuit}}(P) = 0 \Longleftrightarrow \mathrm{inv}_{\mathrm{MON,\spadesuit}}(P) = 0.$$ 
\end{art_def} 
\begin{art_rem} 
\item[(1)] 
If we mix all the letters (numbers) appearing in the word ``A'' and the word ``B'' and if we simply defined the new invariant as the one with these letters lined up from the smallest to the largest going from left to right, then the invariant would not behave very well under blow ups.  (For example, it may strictly increase going from configuration \textcircled{\footnotesize 5} to configuration \textcircled{\footnotesize 3} (cf.~Proposition \ref{invbehavior-prop}).) 
\item[(2)] 
We have the inequalities, for any bad divisor $D \in E_{\mathrm{young}}$, 
$$\mathrm{res\text{-}ord}^{(p^e)}_P(x_D^r \cdot g_{x_D})/p^e \geq \mathrm{ord}_P(a_{p^e})/p^e \geq \mathrm{ord}_P(M_{\mathrm{tight}})$$ 
and hence $w\text{-}\rho_D(P) \geq \mathrm{ord}_P(a_{p^e})/p^e - \mathrm{ord}_P(M_{\mathrm{tight}}) \geq 0$.  Therefore, we also have the inequalities 
$$\mathrm{inv}_{\mathrm{MON,\spadesuit}}(P) \geq \mathrm{inv}_{\mathrm{MON,\heartsuit}}(P) \geq 0,$$ 
where the inequalities are considered according to the lexicographical order described in Definition \ref{invs-def}.  Note that if the second inequality becomes an equality, the first inequality also automatically becomes an equality. 
\end{art_rem} 
\begin{art_prop} 
\item[{\rm (1)}] 
\emph{(Characterization of the tight monomial case)}\quad 
We are in the tight monomial case if and only if one of the following holds: 
 
\smallskip 
 
\noindent \underline{\rm Type I.} $\mathrm{ord}_P(a_{p^e})/p^e - \mathrm{ord}_P(M_{\mathrm{tight}}) = 0$. 
 
\smallskip 
 
We are in the tight monomial case of Type I if and only if we have 
$a_{p^e} = u \cdot (M_{\mathrm{tight}})^{p^e}$, where 
\begin{itemize} 
\setlength{\parskip}{0pt} 
\setlength{\itemsep}{0pt} 
\item 
$a_{p^e}$ is the last coefficient of the unique element $h$ (of the L.G.S., in the Weierstrass form), which is well-adapted at $P$ and at the generic points of all the components of $E_{\mathrm{young}}$ passing through $P$ simultaneously with respect to $X$, 
\item 
$H(\xi_D) \cdot p^e = \mathrm{ord}_{\xi_D}(a_{p^e})$ is an integer for any component $D$ of $E_{\mathrm{young}}$ passing through $P$, and hence $(M_{\mathrm{tight}})^{p^e} = \prod_{D \in E_{\mathrm{young}}}x_D^{H(\xi_D) \cdot p^e}$ has all the integer powers and can be regarded as an element in $\widehat{{\mathcal O}_{W,P}}$, and 
\item 
$u$ is a unit in $\widehat{{\mathcal O}_{W,P}}$. 
\end{itemize} 
\underline{\rm Type II.} $\mathrm{ord}_P(M_{\mathrm{usual}}) - \mathrm{ord}_P(M_{\mathrm{tight}}) = 0$. 
 
\smallskip 
 
We are in the tight monomial case of Type II if and only if we have 
$(M_{\mathrm{tight}})^a = (M_{\mathrm{usual}})^a = M$, where 
\begin{itemize} 
\setlength{\parskip}{0pt} 
\setlength{\itemsep}{0pt} 
\item 
$M$ is the monomial with $(M,a) \in \widehat{{\mathcal R}_P}$ appearing in condition 2 of the setting for the monomial case, 
\item 
$H(\xi_D) \cdot a = \mu(\xi_D) \cdot a = \mathrm{ord}_{\xi_D}(M)$ is an integer for any component $D$ of $E_{\mathrm{young}}$ passing through $P$, and hence $(M_{\mathrm{tight}})^a = (M_{\mathrm{usual}})^a = \prod_{D \in E_{\mathrm{young}}}x_D^{\mu(\xi_D) \cdot a}$ has all the integer powers and can be identified with the element $M \in \widehat{{\mathcal O}_{W,P}}$. 
\end{itemize} 
\item[\rm (2)] 
\emph{(Resolution procedure in the tight monomial case)}\quad 
We can achieve resolution of singularities for $(W,{\mathcal R},E)$ in the tight monomial case by using the invariant $\Gamma_{\mathrm{tight}}$.  More precisely, we have the following description of the resolution procedure: We choose the center $C$ of blow up for the transformation $(W,{\mathcal R},E) \overset{\pi}\longleftarrow (\widetilde{W},\widetilde{{\mathcal R}},\widetilde{E})$ to be 
$$ 
C = \{x_1 = 0\} \cap \mathrm{MaxLocus}(\Gamma_{\mathrm{tight}}) 
= \mathrm{Sing}({\mathcal R}) \cap \mathrm{MaxLocus}(\Gamma_{\mathrm{tight}}) \subset \mathrm{Sing}({\mathcal R}). 
$$ 
Take a closed point $\widetilde{P} \in \pi^{-1}(P) \cap \mathrm{Sing}(\widetilde{\mathcal R}) \subset \widetilde{W}$ (where we assume that the invariant $\sigma$ stays the same).  Then depending on whether $P \in \mathrm{Sing}({\mathcal R}) \subset W$ is in the tight monomial case of Type I or Type II, the point $\widetilde{P} \in \pi^{-1}(P) \cap \mathrm{Sing}(\widetilde{\mathcal R}) \subset \widetilde{W}$ is again in the tight monomial case of Type I or Type II, respectively.  Moreover, the tight monomial $\widetilde{M}_{\mathrm{tight}}$ of the transformation coincides with the transformation $(M_{\mathrm{tight}})^{\widetilde{}}$ of the tight monomial, i.e., $\widetilde{M}_{\mathrm{tight}} = (M_{\mathrm{tight}})^{\widetilde{}}$ (up to the multiplication by a unit).  Therefore, we have the strict decrease of the invariant $\Gamma_{\mathrm{tight}}$ 
$$\widetilde{\Gamma}_{\mathrm{tight}} = \Gamma(\widetilde{M}_{\mathrm{tight}}) = \Gamma((M_{\mathrm{tight}})^{\widetilde{}}) < \Gamma(M_{\mathrm{tight}}) = \Gamma_{\mathrm{tight}}.$$ 
Since the invariant $\Gamma_{\mathrm{tight}}$ cannot decrease infinitely many times, this procedure must come to an end after finitely many repetitions, achieving resolution of singularities for $(W,{\mathcal R},E)$ (or the invariant $\sigma$ strictly decreases). 
\end{art_prop} 
\begin{proof} 
\item[(1)] 
The characterization of the tight monomial case follows immediately from the definition, and is left to the reader as an exercise. 
\item[(2)] 
Recall that the tight monomial is given by the formula 
$M_{\mathrm{tight}} \!= 
\prod_{D \in E_{\mathrm{young}}}\!\!x_D^{\mathrm{H}(\xi_D)}$ 
and that the invariant $\Gamma_{\mathrm{tight}}$ is computed, for a point $Q$ in a neighborhood of the point of our concern $P \in \mathrm{Sing}({\mathcal R}) \subset W$, by the formula 
$$\Gamma_{\mathrm{tight}}(Q) = 
\begin{cases} 
(\Gamma_{\mathrm{tight},1}(Q), \Gamma_{\mathrm{tight},2}(Q), \Gamma_{\mathrm{tight},3}(Q)) 
&\text{ if }\sum_{Q \in D \in E_{\mathrm{young}}}H(\xi_D) \geq 1, \\ 
(\Gamma_{\mathrm{tight},1}(Q)) =(-\infty) 
&\text{ if }\sum_{Q \in D \in E_{\mathrm{young}}}H(\xi_D) < 1, 
\end{cases} 
$$ 
where the components of $\Gamma_{\mathrm{tight}}(Q)$ 
in the former case are defined by 
\begin{align*} 
\Gamma_{\mathrm{tight},1}(Q) &= 
\max\left\{-n \mid \lambda_1, \ldots, \lambda_n \text{ s.t. } 
\sum\nolimits_{i=1}^nH(\xi_{D_{\lambda_i}})\geq 1,\ 
Q \in \bigcap\nolimits_{i=1}^n D_{\lambda_i} 
\right\}, \\ 
\Gamma_{\mathrm{tight},2}(Q) &= \max\left\{ 
\sum\nolimits_{i=1}^nH(\xi_{D_{\lambda_i}}) 
\mid 
Q \in \bigcap\nolimits_{i=1}^nD_{\lambda_i},\ 
- n = \Gamma_{\mathrm{tight},1}(Q) 
\right\}, \\ 
\Gamma_{\mathrm{tight},3}(Q) &= \max\left\{(\lambda_1, \cdots, \lambda_n) \mid 
\!\!\begin{array}{l} 
\lambda_1 < \cdots < \lambda_n,\ 
- n = \Gamma_{\mathrm{tight},1}(Q), 
\\ 
Q \in \bigcap\nolimits_{i=1}^n D_{\lambda_i},\ 
\sum\nolimits_{i=1}^nH(\xi_{D_{\lambda_i}}) 
= \Gamma_{\mathrm{tight},2}(Q) 
\end{array} 
\right\}. 
\end{align*} 
Note that $\Gamma_{\mathrm{tight},2}(Q)$ and $\Gamma_{\mathrm{tight},3}(Q)$ are computed only when $\Gamma_{\mathrm{tight},1}(Q) \neq - \infty$. 
Note also that 
the components $D_{\lambda}$ of the boundary divisor $(E_{\mathrm{young}} \subset) E$ are indexed by the subscripts $\lambda$ in the totally ordered set $\Lambda$.  We remark that, in this discussion of the local version of the resolution problem 
(cf.~\ref{1.3}), 
even when the point $Q$ varies in a neighborhood of the point $P$, we are computing the invariant $\Gamma_{\mathrm{tight}}(Q)$ with respect to the fixed monomial $M_{\mathrm{tight}}$ determined solely by the invariant $\mathrm{H}$ computed at the point $P$ of our concern.  The discussion of the global version of the problem will be given elsewhere. 
 
\medskip 
 
We start the proof of the resolution procedure. 
 
\medskip 
 
$\circ$ Firstly we prove the equality $C_1=C_2$ for the center (in an analytic 
neighborhood of $P$), where 
$$C_1 = 
\{x_1 = 0\} \cap \mathrm{MaxLocus}(\Gamma_{\mathrm{tight}}) 
, \quad C_2= 
\mathrm{Sing}({\mathcal R}) \cap \mathrm{MaxLocus}(\Gamma_{\mathrm{tight}}) 
.$$ 
For a pair $(f,\lambda) \in {\mathcal R}_P$, we introduce the notation $\mathrm{Sing}(f,\lambda) = \{Q \in W \mid \mathrm{ord}_Q(f) \geqq \lambda\}$.  Since $h = x_1^{p^e} + a_1x_1^{p^e - 1} + a_2x_1^{p^e - 2} + \cdots + a_{p^e - 1}x_1 + a_{p^e}$ and since we have $(M_{\mathrm{tight}})^i \mid  (M_{\mathrm{usual}})^i \mid  a_i$ for $i = 1, \ldots, p^e - 1$ 
by Definition 3 and \ref{4.1} 
(Control over the coefficients) and have $(M_{\mathrm{tight}})^{p^e} \mid  a_{p^e}$ by definition, we conclude 
$$ 
C_2 \subset 
\mathrm{Sing}(h,p^e) \cap \mathrm{MaxLocus}(\Gamma_{\mathrm{tight}}) 
= C_1. 
$$ 
On the other hand, by condition 3 of the setting \ref{2.1}, 
for any element $(f,\lambda) \in {\mathcal R}_P$ with 
$f = \sum c_{f,b}h^b$ being the power series expansion of $f$ 
with respect to ${\mathbb H} = \{(h,p^e)\}$ and $X$, we have by the formal coefficient lemma $(M_{\mathrm{usual}})^{\max\{0,\lambda - b \cdot p^e\}} \mid  c_{f,b}$.  This, together with $M_{\mathrm{tight}} | M_{\mathrm{usual}}$, implies 
$$ 
C_1 = \mathrm{Sing}(h,p^e) \cap \mathrm{MaxLocus}(\Gamma_{\mathrm{tight}}) 
\subset \mathrm{Sing}(f,\lambda). 
$$ 
Since $(f,\lambda) \in {\mathcal R}_P$ is arbitrary, we conclude $C_1 \subset C_2$.  Therefore, we have the desired equality $C_1 = C_2$. 
 
\medskip 
 
$\circ$ Secondly we discuss the resolution procedure.  We only present the argument for Type I.  The argument for Type II is easy and left to the reader as an exercise. 
 
\smallskip 
 
Firstly observe that the condition of being in the tight monomial case of \text{Type I} with $h$ being well-adapted at $P$ and at the generic points of all the components of $E_{\mathrm{young}}$ passing through $P$ is equivalent to the following set $(\bigstar)$ of conditions (i), (ii), (iii): 
$$(\bigstar)\ 
\begin{cases} 
\ \mathrm{(i)} & \mathrm{ord}_{\xi_D}(a_{p^e}) \leq \mu(\xi_D) \cdot p^e, 
\quad\forall D \in E_{\mathrm{young}}, \\ 
\,\mathrm{(ii)} & a_{p^e} = u \cdot \left(\prod_{D \in E_{\mathrm{young}}}x_D^{\mathrm{ord}_{\xi_D}(a_{p^e})}\right) \text{ where }u \in \widehat{{\mathcal O}_{W,P}} \text{ is a unit,} \\ 
\mathrm{(iii)} & \exists D \in E_{\mathrm{young}} \text{ s.t. }\mathrm{ord}_{\xi_D}(a_{p^e}) \not\equiv 0 \text{ mod }p^e. 
\end{cases} 
$$ 
When $(\bigstar)$ is satisfied, we have $\mathrm{ord}_{\xi_D}(a_{p^e}) = \mathrm{H}(\xi_D) \cdot p^e \hskip.03in \forall D \in E_{\mathrm{young}}$ and hence $M_T := (M_{\mathrm{tight}})^{p^e} = \prod_{D \in E_{\mathrm{young}}}x_D^{\mathrm{ord}_{\xi_D}(a_{p^e})}$. 
 
\medskip 
 
Secondly suppose that $\mathrm{MaxLocus}(\Gamma_{\mathrm{tight}}) = 
 V(x_{D}\mid D\in\Omega_0)$ 
where 
$$\Omega_0=\{D_{\lambda_i}\mid i=1,\ldots,n\}\subset 
\Omega:=\{D\mid D\in E_{\mathrm{young}}\}.$$ 
Note that 
$$C = \{x_1 = 0\} \cap \mathrm{MaxLocus}(\Gamma_{\mathrm{tight}}) = V( 
\{x_1\}\cup\{ x_{D}\mid D\in\Omega_0\}).$$ 
 
\smallskip 
 
\noindent \fbox{Case: $n = 1$, i.e., $\mathrm{codim}_WC = 2$.} 
 
Since the singular locus $\mathrm{Sing}(\widetilde{\mathcal R})$ is empty over the $x_1$-chart of the blow up, we may assume that $\widetilde{P} \in \mathrm{Sing}(\widetilde{\mathcal R})$ is in the 
$x_{G}$-chart for the unique element $G\in\Omega_0$ with the regular system of parameters $\widetilde{X} = (\widetilde{x_1}, \widetilde{x_2}, \ldots, \widetilde{x_d})$, where 
$$ 
\widetilde{x_i} = 
\begin{cases} 
x_1/x_{G} 
& 
\text{ if }x_i = x_1, 
\\ 
x_{G} 
& 
\text{ if }x_i = x_{G}, 
\\ 
x_i 
& 
\text{ if }x_i \neq x_1, x_{G}. 
\end{cases} 
$$ 
Note that the components of $\widetilde{E}_{\mathrm{young}}$ passing through $\widetilde{P}$ are 
\begin{itemize} 
\setlength{\parskip}{0pt} 
\setlength{\itemsep}{0pt} 
\item 
the new exceptional divisor $\widetilde{F}$ defined by $\{x_{G} = x_{\widetilde{F}} = 0\}$, and 
\item 
the strict transforms $\widetilde{D}$ of $D \in\Omega\setminus\{G\}$. 
\end{itemize} 
We compute the transformation $(\widetilde{h},p^e) = \left(\pi^*(h)/x_{G}^{p^e}, p^e\right)$ of $(h,p^e)$ 
$$\widetilde{h} = \pi^*(h)/x_{G}^{p^e} = \widetilde{x_1}^{p^e} + \widetilde{a_1}\widetilde{x_1}^{p^e - 1} + \cdots + \widetilde{a_{p^e - 1}}\widetilde{x_1} + \widetilde{a_{p^e}},$$ 
where $\widetilde{a_i} = \pi^*(a_i)/x_{G}^i$ for $i = 1, \ldots, p^e - 1$ and where $\widetilde{a_{p^e}} = \pi^*(a_{p^e})/x_{G}^{p^e} = \pi^*(u) \cdot \pi^*(M_T)/x_{G}^{p^e}=  \widetilde{u} \cdot \widetilde{M_T}$ with $\widetilde{u} = \pi^*(u) \in \widehat{{\mathcal O}_{\widetilde{W},\widetilde{P}}}$ being a unit, and 
$$\widetilde{M_T} = \pi^*(M_T)/x_{G}^{p^e} = x_{\widetilde{F}}^{\mathrm{H}(\xi_{G}) \cdot p^e - p^e} \cdot 
\left(\prod\nolimits_{D \in\Omega\setminus\{G\}} 
x_{\widetilde{D}}^{\mathrm{H}(\xi_D) \cdot p^e}\right).$$ 
Observe that, if $\mathrm{ord}_{\widetilde{P}}(\widetilde{a_{p^e}}) = \mathrm{ord}_{\widetilde{P}}(\widetilde{M_T}) \leq p^e$, then $\widetilde{P} \not\in \mathrm{Sing}(\widetilde{{\mathcal R}})$ or the invariant $\sigma$ strictly decreases, i.e., $\sigma(P) > \sigma(\widetilde{P})$.  Therefore, we may assume $\mathrm{ord}_{\widetilde{P}}(\widetilde{a_{p^e}}) = \mathrm{ord}_{\widetilde{P}}(\widetilde{M_T}) > p^e$, which is equivalent to  the condition $\mathrm{ord}_{\widetilde{P}}(\pi^*\left(M_{\mathrm{tight}}/x_{G}\right)) > 1$.  Since we have $M_{\mathrm{tight}}^i \mid  M_{\mathrm{usual}}^i = M^{i/a} \mid  a_i$ for $i = 1, \ldots, p^e - 1$ 
by Definition 3 and \ref{4.1} (Control over the coefficients $a_i$ for $i = 1, \ldots, p^e - 1$), we conclude 
\begin{align*} 
\mathrm{ord}_{\widetilde{P}}(\widetilde{a_i}) &= \mathrm{ord}_{\widetilde{P}}\left(\pi^*(a_i)/x_{G}^i\right) \geq \mathrm{ord}_{\widetilde{P}}\left(\pi^*(M_{\mathrm{tight}})^i/x_{G}^i\right) 
\\ 
&= \mathrm{ord}_{\widetilde{P}}\left(\pi^*(M_{\mathrm{tight}}/x_{G})^i\right)> i \quad\text{ for }i = 1, \ldots, p^e - 1. 
\end{align*} 
On the other hand, we compute the usual monomial $\widetilde{M}_{\mathrm{usual}}$ at $\widetilde{P}$ to be 
$$\widetilde{M}_{\mathrm{usual}} = x_{\widetilde{F}}^{m_{G}/a - 1} \cdot 
\left(\prod\nolimits_{D \in\Omega\setminus\{G\}}x_{\widetilde{D}}^{m_D/a}\right).$$ 
Since $(\bigstar)$ is satisfied at $P$, we have $\mathrm{H}(\xi_D) \leq m_D/a 
\quad \forall D \in\Omega$, which implies 
 
(i) $\mathrm{H}(\xi_{G}) -1 \leq m_{G}/a - 1$ and $\mathrm{H}(\xi_D) \leq m_D/a \hskip.1in \forall D \in\Omega\setminus\{G\}$. 
 
We obviously have $\mathrm{(ii)} \hskip.1in \widetilde{a_{p^e}} = \widetilde{u} \cdot \widetilde{M_T} \text{ where }\widetilde{u} \in \widehat{{\mathcal O}_{\widetilde{W},\widetilde{P}}} \text{ is a unit}$. 
 
Moreover, since $\exists D \in\Omega$ s.t. $\mathrm{H}(\xi_D) \cdot p^e \not\equiv 0 \text{ mod }p^e$, we also conclude that either 
\begin{itemize} 
\setlength{\parskip}{0pt} 
\setlength{\itemsep}{0pt} 
\item 
$\mathrm{H}(\xi_{G}) \cdot p^e - p^e \not\equiv 0 \text{ mod }p^e$, or 
\item 
$\exists \widetilde{D}$ the strict transform of 
$D \in\Omega\setminus\{G\}$ s.t. $\mathrm{H}(\xi_D) \cdot p^e \not\equiv 0 \text{ mod }p^e$. 
\end{itemize} 
This implies that condition (iii) is also satisfied at $\widetilde{P}$. 
 
Therefore, we conclude that $(\bigstar)$ is satisfied at the point $\widetilde{P}$, which implies that $\widetilde{P} \in \mathrm{Sing}(\widetilde{\mathcal R}) \subset \widetilde{W}$ is in the tight monomial case of Type I and that $\widetilde{h}$ is well-adapted at $\widetilde{P}$ and at the generic points of all the components of $\widetilde{E}_{\mathrm{young}}$.  We also conclude that $\widetilde{M}_{\mathrm{tight}} = (\widetilde{M_T})^{1/p^e} = \left(\widetilde{(M_{\mathrm{tight}})^{p^e}}\right)^{1/p^e} = (M_{\mathrm{tight}})^{\widetilde{}}$. 
 
\smallskip 
 
\noindent \fbox{Case: $n > 1$, i.e., $\mathrm{codim}_WC > 2$.} 
 
Since the singular locus $\mathrm{Sing}(\widetilde{\mathcal R})$ is empty over the $x_1$-chart of the blow up, we may assume that $\widetilde{P} \in \mathrm{Sing}(\widetilde{\mathcal R})$ is in the $x_{G}$-chart 
for some $G\in\Omega_0$ 
with the regular system of parameters $\widetilde{X} = (\widetilde{x_1}, \widetilde{x_2}, \ldots, \widetilde{x_d})$, where 
$$ 
\widetilde{x_i} = 
\begin{cases} 
x_1/x_{G}, 
&\text{ if }x_i = x_1, \\ 
x_{G} 
&\text{ if }x_i = x_{G}, \\ 
\left(x_D + c_{D} \cdot x_{G}\right)/x_{G} 
&\text{ if }x_i=x_D,\  D\in\Omega_0\setminus{G}, \\ 
\hfill\text{for some } c_{D} \in k  &\\ 
x_i 
&\text{ if }x_i \not\in\{x_1\}\cup\{x_D\mid D\in\Omega_0\}. 
\end{cases} 
$$ 
Note that the components of $\widetilde{E}_{\mathrm{young}}$ passing through $\widetilde{P}$ are 
\begin{itemize} 
\setlength{\parskip}{0pt} 
\setlength{\itemsep}{0pt} 
\item 
the new exceptional divisor $\widetilde{F}$ defined by $\{x_{G} = x_{\widetilde{F}} = 0\}$, 
\item 
the strict transforms $\widetilde{D}$ of 
$D \in\{G\in\Omega_0\setminus\{G\}\mid c_G=0\}$, and 
\item 
the strict transforms $\widetilde{D}$ of $D \in E_{\mathrm{young}}$ with 
$D \in\Omega\setminus\Omega_0$. 
\end{itemize} 
We compute the transformation $(\widetilde{h},p^e) = \left(\pi^*(h)/x_{G}^{p^e}, p^e\right)$ of $(h,p^e)$ 
$$\widetilde{h} = \pi^*(h)/x_{G}^{p^e} = \widetilde{x_1}^{p^e} + \widetilde{a_1}\widetilde{x_1}^{p^e - 1} + \cdots + \widetilde{a_{p^e - 1}}\widetilde{x_1} + \widetilde{a_{p^e}}$$ 
where $\widetilde{a_i} = \pi^*(a_i)/x_{G}^i$ for $i = 1, \ldots, p^e - 1$ and where $\widetilde{a_{p^e}} = \pi^*(a_{p^e})/x_{G}^{p^e} =  \pi^*(u) \cdot \pi^*(M_T)/x_{G}^{p^e} = \widetilde{u} \cdot \widetilde{M_T}$ with $\widetilde{u} = \pi^*(u) \in \widehat{{\mathcal O}_{\widetilde{W},\widetilde{P}}}$ being a unit, and 
\begin{align*} 
\widetilde{M_T} =& \pi^*(M_T)/x_{G}^{p^e} 
= \pi^*\left(M_{\mathrm{tight}}/x_{G}\right)^{p^e} \\ 
=& 
{x_{\widetilde{F}}}^{\left(\sum_{D \in\Omega_0}\mathrm{H}(\xi_{D})\right) \cdot p^e - p^e} 
\cdot \prod\nolimits_{D \in\Omega_0\setminus\{G\},\ c_{D} = 0} 
\widetilde{x_D}^{\mathrm{H}(\xi_D) \cdot p^e} 
\\ 
&\cdot 
\prod\nolimits_{D \in\Omega\setminus\Omega_0}\widetilde{x_D}^{\mathrm{H}(\xi_D) \cdot p^e} 
\cdot 
\prod\nolimits_{D \in\Omega_0\setminus\{G\},\ c_{D} \neq 0} 
\left[\widetilde{x_D} - c_{D}\right]^{\mathrm{H}(\xi_D) \cdot p^e} 
. 
\end{align*} 
Noting that the last factor above is a unit, we set 
$$ 
\widetilde{M_T}' 
= {x_{\widetilde{F}}}^{\left(\sum_{D \in\Omega_0}\mathrm{H}(\xi_{D})\right) \cdot p^e - p^e} 
\cdot 
\prod\nolimits_{D \in\Omega_0\setminus\{G\}, c_{D} = 0}\widetilde{x_D}^{\mathrm{H}(\xi_D) \cdot p^e} 
\cdot \prod\nolimits_{D \in\Omega\setminus\Omega_0}\widetilde{x_D}^{\mathrm{H}(\xi_D) \cdot p^e}. 
$$ 
Observe that, if $\mathrm{ord}_{\widetilde{P}}(\widetilde{a_{p^e}}) = \mathrm{ord}_{\widetilde{P}}(\widetilde{M_T}) \leq p^e$, then $\widetilde{P} \not\in \mathrm{Sing}(\widetilde{{\mathcal R}})$ or the invariant $\sigma$ 
strictly decreases, i.e., $\sigma(P) > \sigma(\widetilde{P})$. 
Therefore, we may assume $\mathrm{ord}_{\widetilde{P}}(\widetilde{a_{p^e}}) = \mathrm{ord}_{\widetilde{P}}(\widetilde{M_T}) > p^e$, which is equivalent to  the condition $\mathrm{ord}_{\widetilde{P}}\left(\pi^*\left(M_{\mathrm{tight}}/x_{G}\right)\right) > 1$.  Since we have $M_{\mathrm{tight}}^i \mid  M_{\mathrm{usual}}^i = M^{i/a} \mid  a_i$ for $i = 1, \ldots, p^e - 1$ 
by Definition 3 and \ref{4.1} (Control over the coefficients $a_i$ for $i = 1, \ldots, p^e - 1$), we conclude 
\begin{align*} 
\mathrm{ord}_{\widetilde{P}}(\widetilde{a_i}) &= \mathrm{ord}_{\widetilde{P}}\left(\pi^*(a_i)/x_{G}^i\right) \geq \mathrm{ord}_{\widetilde{P}}\left(\pi^*(M_{\mathrm{tight}})^i/x_{G}^i\right) \\ 
&= \mathrm{ord}_{\widetilde{P}}\left(\pi^*(M_{\mathrm{tight}}/x_{G})^i\right)> i 
\qquad 
\text{ for }i = 1, \ldots, p^e - 1. 
\end{align*} 
On the other hand, we compute the transformation of the usual monomial $M_{\mathrm{usual}}$ 
\begin{align*} 
\pi^*\left((M_{\mathrm{usual}})^a\right)/{x_{G}}^a 
=& \pi^*\left(\prod\nolimits_{D \in\Omega}x_D^{m_D}\right)/{x_{G}}^a \\ 
=& {x_{\widetilde{F}}}^{(\sum_{D \in\Omega_0}m_D) - a} 
\cdot \prod\nolimits_{D \in\Omega_0\setminus\{G\}, c_{D} = 0} 
\widetilde{x_D}^{m_D} \\ 
&\ \cdot \prod\nolimits_{D \in \Omega_0\setminus\{G\}, c_{D} \neq 0} 
\left[\widetilde{x_D} - c_{D}\right]^{m_D} 
\cdot \prod\nolimits_{D \in\Omega\setminus\Omega_0}\widetilde{x_D}^{m_D}. 
\end{align*} 
Therefore, we conclude that the usual monomial $\widetilde{M}_{\mathrm{usual}}$ at the point $\widetilde{P}$ is given by the formula 
$$ 
\widetilde{M}_{\mathrm{usual}} = {x_{\widetilde{F}}}^{(\sum_{D\in\Omega_0}m_D/a) - 1} 
\cdot \prod\nolimits_{ 
D \in\Omega_0\setminus\{G\},\ c_{D} = 0} 
{\widetilde{x_D}^{m_D}/a} 
\cdot \prod\nolimits_{D \in\Omega\setminus\Omega_0}\widetilde{x_D}^{m_D/a}. 
$$ 
We want to show that $(\bigstar)$ is satisfied at the point $\widetilde{P}$. 
 
Since $(\bigstar)$ is satisfied at $P$, we have $\mathrm{H}(\xi_D) 
\leq m_D/a,\ \forall D \in\Omega$, which implies 
$$ 
\sum\nolimits_{D \in\Omega_0}\mathrm{H}(\xi_D) -1 \leq 
\sum\nolimits_{D\in\Omega_0}m_D/a - 1, 
\quad\text{and}\quad 
\mathrm{H}(\xi_D) \leq m_D/a \ \text{for}\ D \in\Omega\setminus\{G\}. 
$$ 
This checks condition (i) at $\widetilde{P}$. 
 
\smallskip 
 
We obviously have $\mathrm{(ii)} \hskip.1in \widetilde{a_{p^e}} = \widetilde{u} \cdot \widetilde{M_T} = \widetilde{u}' \cdot \widetilde{M_T}' \text{ where }\widetilde{u}' \in \widehat{{\mathcal O}_{\widetilde{W},\widetilde{P}}} \text{ is a unit.}$ 
 
\smallskip 
 
Condition (iii) follows immediately from the following claim. 
\begin{claim} 
We have either 
$$ 
\exists D \in\Omega\setminus\Omega_0 
\ \text{s.t.}\ 
\mathrm{H}(\xi_D) \cdot p^e \not\equiv 0 \text{ mod }p^e 
\text{ or } 
\left(\sum\nolimits_{D \in\Omega_0}\mathrm{H}(\xi_D) -1\right) \cdot p^e \not\equiv 0 \text{ mod }p^e. 
$$ 
\end{claim}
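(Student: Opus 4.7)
The plan is to prove the claim by contradiction. Assume both alternatives fail, i.e. $S := \sum_{D \in \Omega_0}\mathrm{H}(\xi_D) \in \mathbb{Z}$ and $\mathrm{H}(\xi_D) \in \mathbb{Z}$ for every $D \in \Omega \setminus \Omega_0$. The two ingredients to exploit are the minimality of $n = |\Omega_0|$ built into $\Gamma_{\mathrm{tight},1}(P) = -n$, and the standing quantitative reduction $L := \mathrm{ord}_{\widetilde P}(\pi^*(M_{\mathrm{tight}}/x_{G})) > 1$ that has already been invoked just above the claim.

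The first preparatory remark is that the minimality of $n \geq 2$ forces $\mathrm{H}(\xi_D) < 1$ for every $D \in \Omega$, since otherwise the singleton $\{D\}$ would already achieve an $H$-sum $\geq 1$; in particular dropping any $D' \in \Omega_0$ must yield a subset of $\Omega$ with $H$-sum $< 1$. If $S \geq 2$, then removing any $D' \in \Omega_0$ gives a size-$(n-1)$ subset of $\Omega$ whose $H$-sum equals $S - \mathrm{H}(\xi_{D'}) > S - 1 \geq 1$, contradicting the minimality of $n$ immediately.

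If $S = 1$, I will use the explicit expression for $\pi^*(M_{\mathrm{tight}}/x_{G})$ in the $x_{G}$-chart already written down just above, from which one reads off $L = (S-1) + T_1 + T_2 = T_1 + T_2$, where $T_1 := \sum_{D \in \Omega_0 \setminus \{G\},\, c_D = 0}\mathrm{H}(\xi_D)$ and $T_2 := \sum_{D \in \Omega \setminus \Omega_0}\mathrm{H}(\xi_D)$. Since $\mathrm{H}(\xi_G) > 0$ (otherwise dropping $G$ from $\Omega_0$ would already violate the minimality of $n$), one has $T_1 \leq S - \mathrm{H}(\xi_G) < 1$, so the standing assumption $L > 1$ forces $T_2 > 0$. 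But under our contrapositive hypothesis $T_2$ is a sum of non-negative integers, hence $T_2 \geq 1$, and then some $D \in \Omega \setminus \Omega_0$ must satisfy $\mathrm{H}(\xi_D) \geq 1$; the singleton $\{D\}$ then contradicts $n \geq 2$, completing the argument.

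The main subtlety, and the step requiring the most care, is that the argument genuinely relies on the quantitative reduction $L > 1$: without it, a configuration such as $\mathrm{H}(\xi_{D_1}) = \mathrm{H}(\xi_{D_2}) = 1/2$ with $\Omega = \Omega_0 = \{D_1, D_2\}$ in dimension $3$ would satisfy the general setup while making both alternatives of the claim fail. The content of the claim is thus that once $L > 1$ is imposed (which is exactly the regime in which $\widetilde P$ survives in the singular locus), the minimality of $n$ rules out these borderline configurations, thereby guaranteeing that condition (iii) of $(\bigstar)$ propagates to $\widetilde P$ either through the new exceptional divisor $\widetilde F$ (alternative 2) or through a strict transform from $\Omega \setminus \Omega_0$ (alternative 1).
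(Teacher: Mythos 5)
Your proof is correct, and it shares with the paper the key first step: minimality of $n = |\Omega_0|$ gives $\mathrm{H}(\xi_D) < 1$ for all $D \in \Omega$ and rules out $S := \sum_{D\in\Omega_0}\mathrm{H}(\xi_D) \geq 2$ by the "drop one element" argument. Where you diverge is in disposing of the borderline case $S = 1$: the paper observes that, under the contrapositive hypothesis, $\mathrm{H}(\xi_D)$ is a non-negative integer less than $1$, hence equal to $0$, for every $D \in \Omega\setminus\Omega_0$; therefore $\mathrm{ord}_P(a_{p^e}) = \bigl(\sum_{D\in\Omega_0}\mathrm{H}(\xi_D)\bigr)\cdot p^e = S \cdot p^e \leq p^e$, contradicting the Weierstrass form at $P$ (which demands $\mathrm{ord}_P(a_{p^e}) > p^e$). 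You instead invoke the standing reduction $L = \mathrm{ord}_{\widetilde{P}}(\pi^*(M_{\mathrm{tight}}/x_G)) > 1$ together with the chart formula for $\widetilde{M_T}'$ to force $T_2 > 0$ and hence a singleton $\{D\} \subset \Omega\setminus\Omega_0$ with $\mathrm{H}(\xi_D) \geq 1$. Both routes are valid and both inputs ($L > 1$ at $\widetilde{P}$, Weierstrass at $P$) are available at that point of the proof; the paper's is terser since it never needs the blow-up chart.

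One thing in your closing paragraph is off, though: the claim does \emph{not} genuinely rely on $L > 1$. Your proposed example $\Omega = \Omega_0 = \{D_1, D_2\}$ with $\mathrm{H}(\xi_{D_1}) = \mathrm{H}(\xi_{D_2}) = 1/2$ does not satisfy the general setup even before the reduction $L>1$ is made, because there $a_{p^e} = u\cdot(M_{\mathrm{tight}})^{p^e}$ would have $\mathrm{ord}_P(a_{p^e}) = (1/2 + 1/2)\cdot p^e = p^e$, violating the Weierstrass requirement $\mathrm{ord}_P(a_{p^e}) > p^e$. That is precisely the hypothesis the paper uses, and it already excludes all borderline configurations with $S = 1$ (and $\mathrm{H}(\xi_D)=0$ off $\Omega_0$), so the claim holds without any appeal to $L > 1$.
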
 
\begin{proof}[Proof of the Claim.] 
First we remark that, in this case of $n > 1$, we have $0 \leq \mathrm{H}(\xi_D) < 1 \quad \forall D \in\Omega$ when we look at how the invariant $\Gamma_{\mathrm{tight}}$ dictates the algorithm. 
 
\smallskip 
 
\noindent \underline{Subcase:} 
$\exists D \in\Omega\setminus\Omega_0$ s.t. $0 \neq \mathrm{H}(\xi_D)$. 
\quad 
In this subcase, we have $0 < \mathrm{H}(\xi_D) \cdot p^e < p^e$ and hence $\mathrm{H}(\xi_D) \cdot p^e \not\equiv 0 \text{ mod }p^e$. 
 
\smallskip 
 
\noindent \underline{Subcase:} Otherwise, i.e., 
$\mathrm{H}(\xi_D) = 0,\ \forall D\in\Omega\setminus\Omega_0$. 
\quad 
In this subcase, we claim to have $1 < \sum_{D\in\Omega_0} \mathrm{H}(\xi_{D}) < 2$.  In fact, on one hand, if 
$\sum_{D\in\Omega_0} \mathrm{H}(\xi_{D}) \geq 2$, then, since $0 \leq \mathrm{H}(\xi_{D_{\lambda_n}}) < 1$, we would have 
$\sum_{D\in\Omega_0\setminus\{D_{\lambda_n}\}} \mathrm{H}(\xi_{D}) \geq 1$. 
But this is against the maximality of the value of $\Gamma_{\mathrm{tight,1}}(P)$.  On the other hand, if $\sum_{D\in\Omega_0} \mathrm{H}(\xi_{D}) \leq 1$, then we would have 
\begin{align*} 
\mathrm{ord}_P(a_{p^e}) &= \mathrm{ord}_P\left((M_{\mathrm{tight}})^{p^e}\right) = \mathrm{ord}_P\left(\left(\prod\nolimits_{D \in\Omega}x_D^{\mathrm{H}(\xi_D)}\right)^{p^e}\right) \\ 
&= \mathrm{ord}_P\left(\left(\prod\nolimits_{D\in\Omega_0}x_{D}^{\mathrm{H}(\xi_{D})}\right)^{p^e}\right) 
= \left(\sum\nolimits_{D\in\Omega_0} \mathrm{H}(\xi_{D})\right) \cdot p^e \leq p^e, 
\end{align*} 
where the third equation follows from the subcase assumption. 
But this is against the description of the Weierstrass form for $h$ 
in \ref{4.1}. 
 
Now we conclude that $0 < \left(\sum_{D \in\Omega_0}\mathrm{H}(\xi_D) -1\right) \cdot p^e < p^e$ and hence that $\left(\sum_{D \in\Omega_0}\mathrm{H}(\xi_D) -1\right) \cdot p^e \not\equiv 0 \text{ mod }p^e$. 
 
This finishes the proof of the claim. 
\end{proof} 
 
Therefore, we conclude that $(\bigstar)$ is satisfied at the point $\widetilde{P}$, which implies that $\widetilde{P} \in \mathrm{Sing}(\widetilde{\mathcal R}) \subset \widetilde{W}$ is in the tight monomial case of Type I and that $\widetilde{h}$ is well-adapted at $\widetilde{P}$ and at the generic points of all the components of $\widetilde{E}_{\mathrm{young}}$.  We also conclude that $\widetilde{M}_{\mathrm{tight}} = (\widetilde{M_T}')^{1/p^e} = (\widetilde{M_T})^{1/p^e} = \left(\widetilde{(M_{\mathrm{tight}})^{p^e}}\right)^{1/p^e} = (M_{\mathrm{tight}})^{\widetilde{}}$, where the second equality holds up to the multiplication by a unit. 
 
 
This completes the proof of Proposition 2. 
\end{proof} 
\begin{art_rem} 
\item[(1)] 
In the resolution procedure in the tight monomial case, no matter whether it is of Type I or Type II, the center $C = \mathrm{Sing}({\mathcal R}) \cap \mathrm{MaxLocus}(\Gamma_{\mathrm{tight}})$ is algebraic, being the intersection of the singular locus $\mathrm{Sing}({\mathcal R})$ and some components of the boundary divisor $(E_{\mathrm{young}} \subset) E$.  Therefore, the whole resolution process is algebraic, even though our analysis is carried out at the analytic level. 
\item[(2)] 
The proof of Proposition 2 above is easy, and may look innocuous.  However, it is worthwhile to note that, if we take the blow up with an arbitrary permissible center, i.e., a center which is nonsingular, contained in the singular locus, and transversal to the boundary divisor, we may 
\emph{not} 
stay in the tight monomial case any longer after blow up (even though we will stay in the monomial case).  Therefore, it is essential to choose the center dictated by the invariant $\Gamma_{\mathrm{tight}}$.  In several of the existing approaches for resolution of (hypersurface) singularities in positive characteristic, people may regard an equation of the form $x_1^{p^e} + a_{p^e} = 0$ a ``good'' equation when $a_{p^e} = u \cdot M$ with $u$ being a unit and $M$ being a monomial in terms of $(x_2, \ldots, x_d)$.  However, if we take the blow up with an arbitrary permissible center, the equation may 
\emph{not} 
stay being ``good'' after blow up and after 
\emph{cleaning}. 
This is considered to be one of many pathologies and/or obstacles toward resolution of singularities in positive characteristic by some people. It was Villamayor who first realized that this obstacle can be overcome with the introduction of the notion of the tight monomial case and by the use of the invariant $\Gamma_{\mathrm{tight}}$. 
\end{art_rem} 
\end{subsection} 
\end{section} 
\begin{section}{The new invariant and its behavior under transformation} 
In order to quote the results of our previous paper \cite{KM2} and compare them to the results of this paper directly, we try to use the common notations and symbols: in the description of the setting for the monomial case (with $\tau = t = 1$ and $d = 3$ 
(cf.~\ref{2.1} and \ref{4.1})), 
we set $(x_1, \ldots, x_t, x_{t+1}, \ldots, x_d) = (x_1 = x_t, x_2 = x_{t+1}, x_3 = x_d) = (z, x, y)$ so that the unique element $h$ in the L.G.S. ${\mathbb H} = \{(h,p^e)\}$ is in the Weierstrass form $h = z^{p^e} + a_1z^{p^e-1} + a_2z^{p^e-2} + \cdots + a_{p^e - 1}z + a_{p^e}$ with $a_i \in k[[x,y]]$ and $\mathrm{ord}_P(a_i) > i$ for $i = 1, \ldots, p^e$. 
\begin{subsection}{Description of the algorithm}\label{5.1} 
The description of our algorithm to reach the tight monomial case in dimension 3 depends upon the analysis of the singular locus, which is determined by looking at the invariant $\mathrm{H}$ as below. 
\begin{art_prop}[{Description of the singular locus} (cf.~Proposition 6 in \cite{KM2})] 
We have the following description of the singular locus $\mathrm{Sing}({\mathcal R})$ at $P$, denoted by $\mathrm{Sing}({\mathcal R})_P$, 
according to the values of $h_x = \mathrm{H}(\xi_{H_x})$ 
and $h_y = \mathrm{H}(\xi_{H_y})$: 
$$\mathrm{Sing}({\mathcal R})_P = 
\begin{cases} 
V(z,x) \cup V(z,y)&\text{ if }\quad h_x \geq 1 \text{ and } h_y \geq 1\\ 
V(z,x) &\text{ if }\quad h_x \geq 1 \text{ and } h_y < 1\\ 
V(z, y) &\text{ if }\quad h_x < 1 \text{ and } h_y \geq 1\\ 
V(z,x,y) = P &\text{ if }\quad h_x < 1 \text{ and } h_y < 1, 
\end{cases} 
$$ 
where ``V'' denotes the vanishing locus and where $(z,x,y)$ is a regular system of parameters at $P$ with respect to which $h$ is well-adapted simultaneously at $P$, $\xi_{H_x}$, and $\xi_{H_y}$. 
\end{art_prop} 
 
\noindent \fbox{\textbf{Algorithm}} 
 
\smallskip 
 
\noindent Step 1. \quad We ask the question: $\mathrm{inv}_{\mathrm{MON,\spadesuit}} = 0\ ?$ 
 
If the answer is YES, then we are in the tight monomial case and we are done. 
 
If the answer is NO, then we go to Step 2. 
 
\smallskip 
 
\noindent Step 2. \quad We ask the question: $\dim \mathrm{Sing}({\mathcal R})_P = 1\ ?$ 
 
If the answer is YES, then we take the transformation with center $V(z,x)$ (or $V(z,y)$), and we go back to Step 1.  In case $\mathrm{Sing}({\mathcal R}_P) = V(z,x) \cup V(z,y)$, we choose $V(z,x)$ as the center if $\mathrm{H}(\xi_{H_x}) > \mathrm{H}(\xi_{H_y})$, and choose $V(z,y)$ if $\mathrm{H}(\xi_{H_y}) > \mathrm{H}(\xi_{H_x})$.  If $\mathrm{H}(\xi_{H_x}) = \mathrm{H}(\xi_{H_y})$, then we choose $V(z,x)$ if $H_x = E_{\alpha}$ has the bigger index $\alpha$ than the index $\beta$ for $H_y = E_{\beta}$, and vice versa.  (Note that locally in a neighborhood of $P$ the components of the boundary divisor $(E_{\mathrm{young}} \subset$) $E$ have distinct indices and hence that $\alpha$ cannot be equal to $\beta$.) 
 
If the answer is NO, then we take the transformation with center $P$ and we go back to Step 1. 
 
\smallskip 
 
We repeat this procedure. 
 
\smallskip 
 
The only issue is: \emph{Does this algorithm terminate after finitely many procedures?}  In order to settle this issue, we analyze the behavior of $\mathrm{inv}_{\mathrm{MON,\spadesuit}}$ under transformations. 
\end{subsection} 
\begin{subsection}{Behavior of the new invariant under transformations} 
For the purpose of analyzing the behavior of the new invariant 
$\mathrm{inv}_{\mathrm{MON,\spadesuit}}$, 
we introduce the classification of the \emph{configurations} as in \cite{KM2}. 
 
\smallskip 
 
\noindent \textbf{Configurations} 
 
\smallskip 
 
Looking at the boundary divisor(s) in $E_{\mathrm{young}}$ at the point 
$P \in \mathrm{Sing}({\mathcal R})$ 
and seeing whether they are good or bad, we come up with the following classification of the configurations.  Note that the pictures depict the configurations in a 
2-dimensional manner, 
taking the intersection with the hypersurface $Z = \{z = 0\}$. 
\begin{itemize} 
\setlength{\parskip}{0pt} 
\setlength{\itemsep}{0pt} 
\item[\textcircled{\footnotesize 1}] 
The point $P$ is only on one boundary divisor (in $E_{\mathrm{young}}$), say $H_x$, which is good. 
 \ZU{\VCL{$H_x$}{good} 
 \CPO{$P$}} 
\item[\textcircled{\footnotesize 2}] 
The point $P$ is at the intersection of two boundary divisors (in $E_{\mathrm{young}}$), both of which are good. 
 \ZU{\VCL{$H_x$}{good} 
 \HOL{$H_y$}{good} 
 \CPO{$P$}} 
\item[\textcircled{\footnotesize 3}] 
The point $P$ is only on one boundary divisor (in $E_{\mathrm{young}}$), say $H_x$, which is bad. 
 \ZU{\VCL{$H_x$}{bad} 
 \CPO{$P$}} 
\item[\textcircled{\footnotesize 4}] 
The point $P$ is at the intersection of two boundary divisors (in $E_{\mathrm{young}}$), one of which, say, $H_x$, is bad, while the other, say $H_y$, is good. 
 \ZU{\VCL{$H_x$}{bad} 
 \HOL{$H_y$}{good} 
 \CPO{$P$}} 
\item[\textcircled{\footnotesize 5}] 
The point $P$ is at the intersection of two boundary divisors (in $E_{\mathrm{young}}$), say $H_x$ and $H_y$, both of which are bad. 
 \ZU{\VCL{$H_x$}{bad} 
 \HOL{$H_y$}{bad} 
 \CPO{$P$}} 
\end{itemize} 
\begin{art_prop}[{Behavior of 
${\mathrm{inv}_{\mathrm{MON,\spadesuit}}}$ under transformations}]%
\label{invbehavior-prop} 
\item[\rm 1.] 
Suppose that the point $P$ is in configuration \textcircled{\footnotesize 1} or \textcircled{\footnotesize 2}.  Then we have 
$\mathrm{inv}_{\mathrm{MON,\spadesuit}}(P) \allowbreak = 0$, 
and hence we are in the tight monomial case. 
\item[\rm 2.] 
Suppose that the point $P$ is in configuration \textcircled{\footnotesize 3}, \textcircled{\footnotesize 4}, or \textcircled{\footnotesize 5}, and that 
$\mathrm{inv}_{\mathrm{MON,\spadesuit}}(P) \allowbreak \neq 0$, i.e., we are 
\emph{not} in the tight monomial case. 
 
Let $(W,{\mathcal R},E) \overset{\pi}\longleftarrow (\widetilde{W},\widetilde{\mathcal R},\widetilde{E})$ be the transformation with center $C \subset \mathrm{Sing}({\mathcal R})$ as specified in the algorithm.  Take a closed point $\widetilde{P} \in \pi^{-1}(P) \cap \mathrm{Sing}(\widetilde{\mathcal R}) \subset \widetilde{W}$.  (If $\pi^{-1}(P) \cap \mathrm{Sing}(\widetilde{\mathcal R}) = \emptyset$, then the local version of the resolution problem has already 
been solved (cf.~\ref{1.3}). 
Therefore, we assume that $\pi^{-1}(P) \cap \mathrm{Sing}(\widetilde{\mathcal R}) \neq \emptyset$.  If $\sigma(P) > \sigma(\widetilde{P})$, then we go back to the reduction step 
{\rm \textbf{general case $\to$ monomial case}} 
with the decreased value of 
the invariant $\sigma$. 
Therefore, we also assume that the invariant $\sigma$ stays the same, i.e., 
$\sigma(P) = \sigma(\widetilde{P})$, 
and hence so does the invariant $\tau$, i.e., $\tau(P) = \tau(\widetilde{P}) = 1$.)  The behavior of the new invariant 
${\mathrm{inv}_{\mathrm{MON,\spadesuit}}}$ is summarized as follows. 
 
\medskip 
 
\noindent \underline{{\rm Case (a):} 
$\pi$ is the blow up with a 1-dimensional center, say, $C = V(z,x)$. 
} 
 
\smallskip 
 
In this case, we have ${\mathrm{inv}_{\mathrm{MON,\spadesuit}}}(P) = {\mathrm{inv}_{\mathrm{MON,\spadesuit}}}(\widetilde{P})$ and $\mu(\xi_{H_x}) > \mu(\xi_{H_x}) - 1 = \mu(\xi_{H_{\widetilde{x}}})$. 
 
\smallskip 
 
\noindent \underline{{\rm Case (b):} 
$\pi$ is the blow up with a point center $C = P = V(z,x,y)$. 
} 
 
\smallskip 
 
In this case, we have ${\mathrm{inv}_{\mathrm{MON,\spadesuit}}}(P) > {\mathrm{inv}_{\mathrm{MON,\spadesuit}}}(\widetilde{P})$ 
\emph{except} 
for the case where the transformation $P \in \mathrm{Sing}({\mathcal R}) \subset W \overset{\pi}\longleftarrow \widetilde{P} \in \pi^{-1}(P) \cap \mathrm{Sing}(\widetilde{\mathcal R}) \subset \widetilde{W}$ is ``esoteric''.  The definition of the transformation being esoteric (or standard) is given below. 
 
It turns out (cf.~Lemma 1) that an esoteric transformation occurs \emph{only} 
when the point $P$ is in configuration \textcircled{\footnotesize 4} or \textcircled{\footnotesize 5} and when the point $\widetilde{P}$ is in 
configuration \textcircled{\footnotesize 3}. 
\end{art_prop} 
\begin{art_def} Let $P \in \mathrm{Sing}({\mathcal R}) \subset W \overset{\pi}\longleftarrow \widetilde{P} \in \pi^{-1}(P) \cap \mathrm{Sing}(\widetilde{\mathcal R}) \subset \widetilde{W}$ be a transformation where the point $P$ is in configuration \textcircled{\footnotesize 3}, \textcircled{\footnotesize 4}, or \textcircled{\footnotesize 5} and where the point $\widetilde{P}$ is also in configuration \textcircled{\footnotesize 3}, \textcircled{\footnotesize 4}, or \textcircled{\footnotesize 5}.  We say that the transformation $\pi$ is 
$$ 
\text{esoteric} \Longleftrightarrow\  A < \widetilde{A}, 
\qquad 
\text{standard} \Longleftrightarrow\  A \geq \widetilde{A}, 
$$ 
where the ``words'' $A$ and $\widetilde{A}$ are defined as in Definition 6 
$$ 
\begin{cases} 
A &=\ \mathrm{lex}\left(w\text{-}\rho_{x_D}(P)\mid D \text{ ranges over all 
\emph{bad} 
divisors }D \in E_{\mathrm{young}}\right), \\ 
\widetilde{A} &=\ \mathrm{lex}\left(w\text{-}\rho_{x_{\widetilde{D}}}(\widetilde{P})\mid \widetilde{D} \text{ ranges over all 
\emph{bad} 
divisors }\widetilde{D} \subset \widetilde{E}_{\text{young}}\right), 
\end{cases} 
$$ 
and where the inequalities are given with respect to the lexicographical order. 
\end{art_def} 
\begin{proof}[Proof of Proposition \ref{invbehavior-prop}] 
\item[1.] 
The assertions in 1 are obvious from the definitions, and hence their proof is left to the reader as an exercise. 
\item[2.] 
We present a proof for the assertions in 2 below. 
 
\smallskip 
 
\noindent \underline{{\rm Case (a):}}\quad 
We note that, in this case, there is possibly only one point $\widetilde{P} \in \pi^{-1}(P) \cap \mathrm{Sing}(\widetilde{R}) \subset \widetilde{W}$, lying in the $x$-chart, with the regular system of parameters $\widetilde{X} = (\widetilde{z},\widetilde{x},\widetilde{y}) = (z/x,x,y)$. It is then straightforward to see that $\widetilde{h} = \pi^*(h)/x^{p^e}$ is well-adapted with respect to $\widetilde{X}$, and the rest of the assertions follow easily. 
 
\smallskip 
 
\noindent \underline{{\rm Case (b):}}\quad 
For the verification of the assertions in this case, we consider the following lemma, which is similar to Claim 2 in \cite{KM2}. 
 
\begin{art_lem} Let $(W,{\mathcal R},E) \overset{\pi}\longleftarrow (\widetilde{W},\widetilde{\mathcal R},\widetilde{E})$ be the transformation with a point center $C = P = V(z,x,y) \in \mathrm{Sing}({\mathcal R})$, where $(W,{\mathcal R},E)$ with $\dim W = 3$ is in the monomial case with $\tau = 1$ at $P$ 
as described in the setting \ref{2.1} and \ref{4.1} 
(cf.~the remark about the notations and symbols at the beginning of Section 5), and where the unique element $h$ in the L.G.S. ${\mathbb H} = \{(h,p^e)\}$ is well-adapted simultaneously at $P$ and at the the generic points of all the components of $E_{\mathrm{young}}$ passing through $P$ with respect to the regular system of parameters $(z,x,y)$ (cf.~Proposition 1 (1)).  Set $Z = \{z = 0\}$.  Then a point $\widetilde{P} \in \pi^{-1}(P) \cap \mathrm{Sing}(\widetilde{\mathcal R}) \subset \widetilde{W}$ must be on the strict transform $Z'$ of $Z$, lying either in the $x$-chart or in the $y$-chart.  Assume that the invariant $\sigma$ stays the same, i.e., $\sigma(P) = \sigma(\widetilde{P})$ (and hence that we stay in the monomial case with $\tau = 1$ at $\widetilde{P}$).  Assume further that $\mathrm{inv}_{\mathrm{MON,\spadesuit}}(P) \neq 0$, i.e., we are 
\emph{not} 
in the tight monomial case at $P$.  We make the following observations regarding the behavior of the invariants under blow up.  We denote the strict transforms of $H_x = \{x = 0\}$ and $H_y = \{y = 0\}$ by $H_x'$ and $H_y'$, and the exceptional divisor by $E_P$.  Note that the pictures depict the configuration in a 2-dimensional manner, taking the intersection with the hypersurface $Z$ before blow up and with its strict transform $Z'$ after blow up. 
 
\smallskip 
 
\item[\rm (1)] 
The point $P$ is in configuration \textcircled{\footnotesize 3}, \textcircled{\footnotesize 4}, or \textcircled{\footnotesize 5}. 
 
{\rm (1.1)} Look at the point $\widetilde{P} = E_P \cap H'_x \cap Z'$ in the $y$-chart with the regular system of parameters $(\widetilde{z},\widetilde{x},\widetilde{y}) = (z/y,x/y,y)$.  Then the hypersurface $H'_x = H_{\widetilde{x}}$ is bad, and we have $w\text{-}\rho_x(P) > w\text{-}\rho_{\widetilde{x}}(\widetilde{P})$. 
 
{\rm (1.2)} Suppose $P$ is bad, and hence $E_P$ is 
also bad (cf.~Remark 4).  Look at the point $\widetilde{P} \in E_P \cap H'_y \cap Z'$ in the $x$-chart with the regular system of parameters 
$(\widetilde{z},\widetilde{x},\widetilde{y}) 
= (z/x,x,y/x)$.  (Note that, if the point $P$ is in configuration \textcircled{\footnotesize 3}, then the point $\widetilde{P}$ is 
\emph{any} point on $(E_P \setminus H_x') \cap Z'$.)  Then we have $w\text{-}\rho_x(P) \geq w\text{-}\rho_{\widetilde{x}}(\widetilde{P})$. 
 
\ZU{ 
 \VCL{$H_x$}{bad} 
 \HDL{$H_y$}{} 
 \CPO{$P$} 
\hskip94pt $\uparrow$} 
\ZU{ 
 \VLL{$H_x'$}{bad} 
 \RDL{$H_y'$}{} 
 \HOL{$E_P$}{$\mathit{bad}_{(1.2)}$} 
 \LPO{$\widetilde{P}_{(1.1)}$} 
\RPO{$\widetilde{P}_{(1.2)}$} 
} 
 
{\rm (1.3)} Suppose $P$ is bad, and hence $E_P$ is 
also bad (cf.~Remark 4).  Look at the point $\widetilde{P} \in (E_P \setminus H_x') \cap Z'$ with a regular system of parameters $(\widetilde{z},\widetilde{x},\widetilde{y}) = (z/x,x,y/x - c)$ for some $c \in k$.  We further assume, in case the point $P$ is in configuration \textcircled{\footnotesize 5}, that $c \neq 0$, i.e., $\widetilde{P} \not\in H_y'$. 
Then we have $\mathrm{ord}_P(M_{\mathrm{usual}}) - \mathrm{ord}_P(M_{\mathrm{tight}}) > \mathrm{ord}_{\widetilde{P}}(\widetilde{M}_{\mathrm{usual}}) - \mathrm{ord}_{\widetilde{P}}(\widetilde{M}_{\mathrm{tight}})$. 
 
\ZU{ 
 \VCL{$H_x$}{bad} 
 \HDL{$H_y$}{} 
 \CPO{$P$ \text{bad}} 
\hskip94pt $\uparrow$} 
\ZU{ 
 \VLL{$H_x'$}{bad} 
 \RDL{$H_y'$}{} 
 \HOL{$E_P$$\mathrm{\ bad}$}{} 
 \CPO{$\widetilde{P}$ \text{ or }} 
\RPO{$\widetilde{P}_{\text{only when }P\text{ in 
\textcircled{\tiny 3} or \textcircled{\tiny 4}}}$} 
} 
 
\item[\rm (2)] 
The point $P$ is in configuration \textcircled{\footnotesize 3}, \textcircled{\footnotesize 4}, or \textcircled{\footnotesize 5}.  Suppose $P$ is bad, and hence $E_P$ is also bad (cf.~Remark 4).  Look at the point $\widetilde{P} \in E_P \cap H'_x \cap Z'$ with the regular system of parameters $(\widetilde{z},\widetilde{x},\widetilde{y}) = (z/y,x/y,y)$.  Then we have $\mathrm{ord}_P(M_{\mathrm{usual}}) - \mathrm{ord}_P(M_{\mathrm{tight}}) > w\text{-}\rho_{\widetilde{y}}(\widetilde{P})$. 
 
\ZU{ 
 \VCL{$H_x$}{bad} 
 \HDL{$H_y$}{} 
 \CPO{$P$ \text{bad}} 
\hskip94pt $\uparrow$} 
\ZU{ 
 \VLL{$H_x'$}{bad} 
 \RDL{$H_y'$}{} 
 \HOL{$E_P$}{bad} 
 \LPO{$\widetilde{P}$} 
} 
\item[\rm (3)] 
The point $P$ is in configuration \textcircled{\footnotesize 5}.  Suppose $P$ is good.  Then we have 
$w\text{-}\rho_x(P) > w\text{-}\mu(\xi_{H_y})$ and $w\text{-}\rho_y(P) > w\text{-}\mu(\xi_{H_x})$.  Note that the invariant $w\text{-}\mu(\xi_{H_x})$ is defined by the formula $w\text{-}\mu(\xi_{H_x}) = \mu(\xi_{H_x}) - \mathrm{H}(\xi_{H_x})$.  Look at the point $\widetilde{P} = E_P \cap H'_x \cap Z'$ in the $y$-chart 
with the regular system of parameters 
$(\widetilde{z},\widetilde{x},\widetilde{y}) = (z/y,x/y,y)$.  Since 
$w\text{-}\mu(\xi_{H_{\widetilde{x}}}) = w\text{-}\mu(\xi_{H_x})$, we have as a consequence $w\text{-}\rho_y(P) > w\text{-}\mu(\xi_{H_{\widetilde{x}}}) = \mathrm{ord}_{\widetilde{P}}(\widetilde{M}_{\mathrm{usual}}) - \mathrm{ord}_{\widetilde{P}}(\widetilde{M}_{\mathrm{tight}})$.  We draw a similar conclusion looking 
at the point $E_P \cap H'_y \cap Z'$ in the $x$-chart. 
 
\ZU{ 
 \VCL{$H_x$}{bad} 
 \HOL{$H_y$}{bad} 
 \CPO{$P$ \text{good}} 
\hskip94pt $\uparrow$} 
\ZU{ 
 \VLL{$H_x'$}{bad} 
 \VRL{$H_y'$}{bad} 
 \HOL{$E_P$}{good} 
 \LPO{$\widetilde{P}$ \hskip.3in \text{ or }} 
\RPO{$\widetilde{P}$} 
} 
\end{art_lem} 
\begin{proof}  Our lemma here is different from Claim 2 in \cite{KM2} in the following two aspects: 
\begin{itemize} 
\setlength{\parskip}{0pt} 
\setlength{\itemsep}{0pt} 
\item 
we study the behavior of the invariant $\mathrm{ord}(M_{\mathrm{usual}}) - \mathrm{ord}(M_{\mathrm{tight}})$ more closely, 
\item 
the proof does \emph{not} use the condition that, according to the algorithm, we blow up a point center only when $\mathrm{H}(D) < 1$ for any bad divisor  $D$ passing through $P$.  (Note that the proof of Claim 2 in \cite{KM2} heavily uses this condition.) 
\end{itemize} 
Despite these differences and some subtleties to be taken care of, however, the proof of Lemma 1 goes almost identical to that of Claim 2 in \cite{KM2}.  Therefore, we omit the proof here.  For the detailed calculation and proof, we refer the reader to \cite{KM3}. 
\end{proof} 
 
Now the verification of the assertions in 2. {\rm Case (b)} of Proposition \ref{invbehavior-prop} 
follows directly from Lemma 1 and the assumption that we exclude the case where the transformation $\pi$ is esoteric. 
We only provide a proof for the case where $P$ is in configuration \textcircled{\footnotesize 5}.  The argument for the other cases where $P$ is in configuration \textcircled{\footnotesize 3} or \textcircled{\footnotesize 4} is similar, and left to the reader as an exercise. 
 
\smallskip 
 
\noindent \underline{Case: $P$ is in configuration \textcircled{\footnotesize 5}} 
 
\smallskip 
 
\noindent \underline{Subcase:} 
$\widetilde{P}$ is in configuration \textcircled{\footnotesize 3}.  The transformation $\pi$ is standard. 
 
In this subcase, by the definition of $\pi$ being standard we have the inequality $\mathrm{lex}\left(w\text{-}\rho_x(P), w\text{-}\rho_y(P)\right) \geq w\text{-}\rho_{\widetilde{x}}(\widetilde{P})$.  By Lemma 1 (1) (1.3) we also have the inequality $\mathrm{ord}_P(M_{\mathrm{usual}}) - \mathrm{ord}_P(M_{\mathrm{tight}}) > \mathrm{ord}_{\widetilde{P}}(\widetilde{M}_{\mathrm{usual}}) - \mathrm{ord}_{\widetilde{P}}(\widetilde{M}_{\mathrm{tight}})$.  Therefore, we conclude 
\begin{align*} 
\mathrm{inv}_{\mathrm{MON,\spadesuit}}(P) &= 
\mathrm{lex}\left\{\mathrm{lex}\left(w\text{-}\rho_x(P), w\text{-}\rho_y(P)\right), \mathrm{ord}_P(M_{\mathrm{usual}}) - \mathrm{ord}_P(M_{\mathrm{tight}})\right\} \\ 
&> \mathrm{lex}\left\{w\text{-}\rho_{\widetilde{x}}, \mathrm{ord}_{\widetilde{P}}(\widetilde{M}_{\mathrm{usual}}) - \mathrm{ord}_{\widetilde{P}}(\widetilde{M}_{\mathrm{tight}})\right\} 
= \mathrm{inv}_{\mathrm{MON,\spadesuit}}(\widetilde{P}). 
\end{align*} 
\noindent \underline{Subcase:} 
$\widetilde{P}$ is in configuration \textcircled{\footnotesize 4} with $\widetilde{P} \in H_{\widetilde{x}}$. 
 
(The verification of the subcase where $\widetilde{P}$ is in configuration \textcircled{\footnotesize 4} with $\widetilde{P} \in H_{\widetilde{y}}$ is identical, and hence omitted.) 
 
In this subcase, by Lemma 1 (1) (1.1) we have the inequality $w\text{-}\rho_x(P) > w\text{-}\rho_{\widetilde{x}}(\widetilde{P})$.  By Lemma 1 (3) we also have the inequality $w\text{-}\rho_y(P) > \mathrm{ord}_{\widetilde{P}}(\widetilde{M}_{\mathrm{usual}}) - \mathrm{ord}_{\widetilde{P}}(\widetilde{M}_{\mathrm{tight}})$.  Moreover, we compute 
\begin{align*} 
\lefteqn{ 
\mathrm{ord}_P(M_{\mathrm{usual}}) - \mathrm{ord}_P(M_{\mathrm{tight}}) 
= \left\{ 
\mu(\xi_{H_x}) - \mathrm{H}(\xi_{H_x}) 
\right\} + \left\{ 
\mu(\xi_{H_y}) - \mathrm{H}(\xi_{H_y}) 
\right\} 
} 
\\ & 
\quad 
> 
\phantom{\bigl\{} 
\mu(\xi_{H_x}) - \mathrm{H}(\xi_{H_x}) 
\phantom{\bigl\}} 
\! = \! 
\phantom{\bigl\{} 
\mu(\xi_{H_{\widetilde{x}}}) - \mathrm{H}(\xi_{H_{\widetilde{x}}}) 
\phantom{\bigl\}} 
&\qquad\qquad 
{(\text{since }H_y \text{ is bad})\phantom{g}} 
\\ 
& 
\quad 
= \left\{ 
\mu(\xi_{H_{\widetilde{x}}}) - \mathrm{H}(\xi_{H_{\widetilde{x}}}) 
\right\} 
+ \left\{ 
\mu(\xi_{H_{\widetilde{y}}}) - \mathrm{H}(\xi_{H_{\widetilde{y}}}) 
\right\} 
&\qquad\qquad 
(\text{since $H_{\widetilde{y}}$ is good}) 
\\ 
& 
\quad 
= \mathrm{ord}_{\widetilde{P}}(\widetilde{M}_{\mathrm{usual}}) - \mathrm{ord}_{\widetilde{P}}(\widetilde{M}_{\mathrm{tight}}). 
\end{align*} 
Therefore, we conclude 
\begin{align*} 
\mathrm{inv}_{\mathrm{MON,\spadesuit}}(P) &= \mathrm{lex}\left\{\mathrm{lex}\left(w\text{-}\rho_x(P), w\text{-}\rho_y(P)\right), \mathrm{ord}_P(M_{\mathrm{usual}}) - \mathrm{ord}_P(M_{\mathrm{tight}})\right\} \\ 
&> \mathrm{lex}\left\{w\text{-}\rho_{\widetilde{x}}(\widetilde{P}), \mathrm{ord}_{\widetilde{P}}(\widetilde{M}_{\mathrm{usual}}) - \mathrm{ord}_{\widetilde{P}}(\widetilde{M}_{\mathrm{tight}})\right\} 
= \mathrm{inv}_{\mathrm{MON,\spadesuit}}(\widetilde{P}). 
\end{align*} 
\noindent \underline{Subcase:} 
$\widetilde{P}$ is in configuration \textcircled{\footnotesize 5} with $\widetilde{P} \in H_{\widetilde{x}}$. 
 
(The verification of the subcase where $\widetilde{P}$ is in configuration \textcircled{\footnotesize 5} with $\widetilde{P} \in H_{\widetilde{y}}$ is identical, and hence omitted.) 
 
In this subcase, by Lemma 1 (1) (1.1) we have the inequality $w\text{-}\rho_x(P) > w\text{-}\rho_{\widetilde{x}}(\widetilde{P})$.  By Lemma 1 (1) (1.2) we have the inequality $w\text{-}\rho_y(P) \geq w\text{-}\rho_{\widetilde{y}}(\widetilde{P})$.  By Lemma 1 (2) we also have the inequality $\mathrm{ord}_P(M_{\mathrm{usual}}) - \mathrm{ord}_P(M_{\mathrm{tight}}) > w\text{-}\rho_{\widetilde{y}}(\widetilde{P})$.  Therefore, we conclude 
\begin{align*} 
\lefteqn{ 
\mathrm{inv}_{\mathrm{MON,\spadesuit}}(P) 
 = \mathrm{lex}\left\{\mathrm{lex}\left(w\text{-}\rho_x(P), w\text{-}\rho_y(P)\right), \mathrm{ord}_P(M_{\mathrm{usual}}) - \mathrm{ord}_P(M_{\mathrm{tight}})\right\} 
}\\ 
&\ > \mathrm{lex}\left\{\mathrm{lex}\left(w\text{-}\rho_{\widetilde{x}}(\widetilde{P}), w\text{-}\rho_{\widetilde{y}}(\widetilde{P})\right), \mathrm{ord}_{\widetilde{P}}(\widetilde{M}_{\mathrm{usual}}) - \mathrm{ord}_{\widetilde{P}}(\widetilde{M}_{\mathrm{tight}})\right\} 
= \mathrm{inv}_{\mathrm{MON,\spadesuit}}(\widetilde{P}). 
\end{align*} 
This completes the proof of Proposition \ref{invbehavior-prop}. 
\end{proof} 
 
Now we give the statement of the eventual decrease, whose proof, together with a more detailed analysis of the esoteric transformation, will be given in the next section. 
\begin{art_prop}[{Eventual Decrease}] 
Consider a transformation in the procedure specified by the algorithm given 
in \ref{5.1} 
(in the monomial case with $\tau = 1$) $P \in \mathrm{Sing}({\mathcal R}) \subset W \overset{\pi}\longleftarrow \widetilde{P} \in \pi^{-1}(P) \cap \mathrm{Sing}(\widetilde{\mathcal R}) \subset \widetilde{W}$ where the new invariant strictly increases, i.e., $\mathrm{inv}_{\mathrm{MON,\spadesuit}}(P) < \mathrm{inv}_{\mathrm{MON,\spadesuit}}(\widetilde{P})$ (while the invariant $\sigma$ stays the same and hence we remain in the monomial case with $\tau = 1$ after the transformation).  By Proposition \ref{invbehavior-prop} 
we observe that the transformation $\pi$ satisfies the following properties: 
\begin{itemize} 
\setlength{\parskip}{0pt} 
\setlength{\itemsep}{0pt} 
\item 
the transformation $\pi$ is necessarily esoteric, where the center of blow up is the point $P$, which is bad, and 
\item 
the point $P \in \mathrm{Sing}({\mathcal R}) \subset W$ is in configuration \textcircled{\footnotesize 4} or \textcircled{\footnotesize 5}, while the point 
$\widetilde{P} \in \mathrm{Sing}(\widetilde{\mathcal R}) \subset \widetilde{W}$ is in configuration \textcircled{\footnotesize 3}. 
\end{itemize} 
Now starting from $P$ in configuration \textcircled{\footnotesize 4} or \textcircled{\footnotesize 5}, followed by $\widetilde{P}$ and the points afterwards all in configuration \textcircled{\footnotesize 3}, we denote by $P^{\sharp}$ the first point to be in configuration \textcircled{\footnotesize 4} or \textcircled{\footnotesize 5}. 
 
Then we have $\mathrm{inv}_{\mathrm{MON,\spadesuit}}(P) > \mathrm{inv}_{\mathrm{MON,\spadesuit}}(P^{\sharp})$. 
\end{art_prop} 
\end{subsection} 
\begin{subsection}{Termination of the algorithm}\label{5.3} 
With the analysis of the behavior of the new invariant 
$\mathrm{inv}_{\mathrm{MON,\spadesuit}}$ 
in Proposition \ref{invbehavior-prop} 
and the statement of the eventual decrease in Proposition 5 at hand, we are now ready to show that the algorithm terminates after finitely many procedures. 
\begin{art_thm} Our algorithm for resolution of singularities in the monomial case with $\tau = 1$ in dimension 3, as described 
in \ref{5.1}, 
terminates after finitely many procedures.  More precisely, after finitely many procedures, we reach the situation where one of the following holds. 
 
Case 1. The singular locus is empty (over the fiber of the original point $P$). 
 
\noindent In this Case 1, we have already achieved (local) resolution of singularities. 
 
Case 2. The invariant $\sigma$ strictly decreases. 
 
\noindent In this Case 2, we achieve (local) resolution of singularities by induction on the invariant $\sigma$.  (More precisely, we go back to the reduction process 
{\rm \textbf{general case $\to$ monomial case}}, as described in Villamayor's philosophy 
\ref{1.5} (3), with the decreased value of the invariant $\sigma$.) 
 
Case 3. The invariant 
$\mathrm{inv}_{\mathrm{MON,\spadesuit}}$ becomes zero, 
i.e., we are in the tight monomial case.  In this Case 3, we achieve (local) resolution of singularities by following the procedure described in Proposition 2 (2). 
 
\smallskip 
 
In all of the cases above, therefore, we achieve (local) resolution of singularities. 
\end{art_thm} 
\begin{proof} Suppose we have a sequence of transformations for resolution of singularities in the monomial case with $\tau = 1$ in dimension 3, 
as described in \ref{5.1}. 
We may assume, throughout the sequence, that the singular locus locally at the point of reference (over the fiber of the original point) is never empty and that the invariant $\sigma$ stays the same (and hence we stay in the monomial case with $\tau = 1$).  Note that, otherwise, we are in Case 1 or Case 2 and we are done. 
 
\smallskip 
 
\noindent \underline{Observation 1.}  When the transformation $\pi$ has a 1-dimensional center (say, $C = V(z,x)$), we observe 
the following by Proposition \ref{invbehavior-prop}. 
\begin{itemize} 
\setlength{\parskip}{0pt} 
\setlength{\itemsep}{0pt} 
\item 
The new invariant $\mathrm{inv}_{\mathrm{MON,\spadesuit}}$ stays the same. 
\item 
The invariant $\mu(\xi_{H_x})$ strictly decreases (while the invariant $\mu(\xi_{H_y})$ remains the same). 
\end{itemize} 
Therefore, there is no infinite consecutive sequence of the transformations with 1-dimensional centers. 
 
\smallskip 
 
\noindent \underline{Observation 2.}  When the transformation $\pi$ has 
a point center $P$, we observe the following by Proposition \ref{invbehavior-prop}. 
\begin{itemize} 
\setlength{\parskip}{0pt} 
\setlength{\itemsep}{0pt} 
\item 
If the transformation $\pi$ is standard, then 
the new invariant $\mathrm{inv}_{\mathrm{MON,\spadesuit}}$ strictly 
decreases after the transformation. 
\item 
If the transformation $\pi$ is esoteric, then 
the transformation cannot be followed by infinite and consecutive points in configuration \textcircled{\footnotesize 3}. 
When we reach a point in configuration \textcircled{\footnotesize 4} or \textcircled{\footnotesize 5} (after finitely many points in configuration \textcircled{\footnotesize 3}), 
the new invariant $\mathrm{inv}_{\mathrm{MON,\spadesuit}}$ 
decreases to a value strictly lower than the original one by Proposition 5. 
\end{itemize} 
Note that the second item in Observation 2 is verified as follows. 
Assume that we have 
an infinite and consecutive sequence of points in configuration \textcircled{\footnotesize 3}.  Then, 
such a sequence would have to contain infinitely many transformations with point blow ups by Observation 1.  However, if the point is in 
configuration \textcircled{\footnotesize 3}, 
then the transformation with a point center is necessarily standard by Proposition \ref{invbehavior-prop}, and the new invariant 
$\mathrm{inv}_{\mathrm{MON,\spadesuit}}$ strictly decreases 
by the first item in Observation 2. 
Since the new invariant 
$\mathrm{inv}_{\mathrm{MON,\spadesuit}}$ 
stays the same under the transformation with 1-dimensional center by Observation 1, we conclude that the alleged sequence would give rise to 
an infinite and strictly decreasing sequence of the values of the new 
invariant $\mathrm{inv}_{\mathrm{MON,\spadesuit}}$, a contradiction ! 
 
\smallskip 
 
By Observations 1 and 2, we see that the resolution sequence gives rise to, by looking at some subsequence of the points, a strictly decreasing sequence of the values of the new invariant $\mathrm{inv}_{\mathrm{MON,\spadesuit}}$. 
Therefore, we finally conclude that after finitely many transformations, we reach the stage where the new invariant 
$\mathrm{inv}_{\mathrm{MON,\spadesuit}}$ 
becomes $0$ and hence we are in Case 3, i.e., in the tight monomial case. 
 
This completes the proof of Theorem 1. 
\end{proof} 
\begin{art_cor} Our algorithm for (local) resolution of singularities of an idealistic filtration in dimension 3 is complete, realizing the philosophy of Villamayor in the framework of the I.F.P. 
\end{art_cor} 
\begin{proof} This is an easy corollary to Theorem 1, once one understands the general mechanism of our algorithm explained 
in \ref{1.3}, \ref{1.4}, and \ref{1.5}. 
Note that this corollary has been already proved in our previous paper \cite{KM2}, using the old invariant $\mathrm{inv}_{\mathrm{MON}}$ 
in the monomial case.  Here we use the new invariant 
$\mathrm{inv}_{\mathrm{MON,\spadesuit}}$ 
in the monomial case.  The description of the algorithm in \cite{KM2} is different in the monomial case, but it turns out to provide exactly the same procedure as the one provided by the algorithm described in this paper 
(cf.~\ref{5.1}).  So the only difference lies in how we prove the termination in the monomial case. 
\end{proof} 
\end{subsection} 
\end{section} 
\begin{section}{Analysis of the jumping phenomenon and eventual decrease} 
\begin{subsection}{Detailed analysis of the ``Moh-Hauser jumping 
phenomenon''}\label{6.1} 
Here we give a more detailed analysis of the ``Moh-Hauser jumping 
phenomenon'' in dimension 3.  In short, the transformation that goes through the ``Moh-Hauser jumping 
phenomenon'' (more precisely, the transformation that is esoteric) is characterized by the initial form $\mathrm{In}_P(a_{p^e})$ of the last coefficient $a_{p^e}$ of the unique element $h$ in the L.G.S.. 
 
\smallskip 
 
\noindent \fbox{\rm \textbf{Situation}} 
 
\smallskip 
 
Consider a transformation in the procedure specified by the algorithm 
given in \ref{5.1} 
(in the monomial case with $\tau = 1$) $P \in \mathrm{Sing}({\mathcal R}) \subset W \overset{\pi}\longleftarrow \widetilde{P} \in \pi^{-1}(P) \cap \mathrm{Sing}(\widetilde{\mathcal R}) \subset \widetilde{W}$, where the new invariant strictly increases, i.e., $\mathrm{inv}_{\mathrm{MON,\spadesuit}}(P) < \mathrm{inv}_{\mathrm{MON,\spadesuit}}(\widetilde{P})$ (while the invariant $\sigma$ stays the same and hence we remain in the monomial case with $\tau = 1$ after the transformation).  By Proposition \ref{invbehavior-prop} we observe that the transformation $\pi$ satisfies the following properties: 
\begin{itemize} 
\setlength{\parskip}{0pt} 
\setlength{\itemsep}{0pt} 
\item 
the transformation $\pi$ is necessarily esoteric, where the center of blow up is the point $P$, which is bad, 
\item 
the point $P \in \mathrm{Sing}({\mathcal R}) \subset W$ is in configuration \textcircled{\footnotesize 4} or \textcircled{\footnotesize 5}, while the point 
$\widetilde{P} \in \mathrm{Sing}(\widetilde{\mathcal R}) \subset \widetilde{W}$ is in configuration \textcircled{\footnotesize 3}. 
\end{itemize} 
 
Observe 
\begin{align*} 
\mathrm{inv}_{\mathrm{MON,\heartsuit}}(P) &= \mathrm{H}(P) - \mathrm{ord}_P(M_{\mathrm{tight}}) \\ 
&\leq 
\begin{cases} 
w\text{-}\rho_x(P) 
&\text{if $P$ is in configuration \textcircled{\footnotesize 4}} \\ 
\mathrm{lex}\left(w\text{-}\rho_x(P), w\text{-}\rho_y(P)\right) 
&\text{if $P$ is in configuration \textcircled{\footnotesize 5}} 
\end{cases} 
\\ 
&< w\text{-}\rho_{\widetilde{x}}, 
\end{align*} 
where the first equality holds because the point $P$ is bad 
(cf.~Definitions 4 and 6), 
the second inequality holds in general (cf.~Remark 5 (2)), and the third inequality follows from the fact that the transformation $\pi$ is esoteric. 
Thus we have the inequality 
$$(\odot)\qquad \mathrm{inv}_{\mathrm{MON,\heartsuit}}(P) 
= \mathrm{H}(P) - \mathrm{ord}_P(M_{\mathrm{tight}}) 
< w\text{-}\rho_{\widetilde{x}}(\widetilde{P}).$$ 
 
Let $(h,p^e)$ be the unique element in the L.G.S. ${\mathbb H}$ of the idealistic filtration ${\mathcal R}$ at $P$.  Suppose that $h$ is in the Weierstrass form (cf.~\ref{4.1}) 
$$h = z^{p^e} + a_1z^{p^e-1} + a_2z^{p^e - 2} + \cdots + a_{p^e - 1}z + a_{p^e}$$ 
with $a_i \in k[[x,y]]$ and $\mathrm{ord}_P(a_i) > i 
\quad (i = 1, \ldots, p^e - 1, p^e)$ for a regular system of parameters $X = (z,x,y)$ of $\widehat{{\mathcal O}_{W,P}}$, and that the divisor $H_x = \{x = 0\}$ is the bad divisor while the divisor $H_y = \{y = 0\}$ is the good one in configuration \textcircled{\footnotesize 4} (resp. the divisors $H_x$ and $H_y$ are the two bad divisors in configuration \textcircled{\footnotesize 5}). 
Suppose further that $h$ is well-adapted with respect to $X$ 
at the closed point $P$ and at the generic points of the divisors $H_x$ and $H_y$ simultaneously. 
Since the point $\widetilde{P}$ is in configuration \textcircled{\footnotesize 3}, we may assume that it lies in the $x$-chart of the blow up and that it has a regular system of parameters $(\widetilde{z},\widetilde{x},\widetilde{y}) = (z/x,x,(y - cx)/x)$ for some $c \in k \setminus \{0\}$.  By replacing $x$ with $cx$, we may further assume that a regular system of parameters at $\widetilde{P}$ is given by $\widetilde{X} = (\widetilde{z},\widetilde{x},\widetilde{y}) = (z/x,x,(y - x)/x)$. 
 
Let $\mathrm{In}_P(a_{p^e}) = \Phi(x,y)$ be the initial form (i.e., the lowest degree homogeneous part) of the last coefficient $a_{p^e}$ of the element $h$. 
Note that, since $h$ is well-adapted with respect to $(z,x,y)$ at $P$ 
and since $P$ is a bad point, the initial form $\mathrm{In}_P(a_{p^e}) = \Phi(x,y)$ is 
\emph{not} a $p^e$-th power. 
 
\begin{art_prop}[{Analysis of the ``Moh-Hauser jumping phenomenon'' in dimension 3}]\label{MHjump-prop} 
Under the situation above, 
we assert that the homogeneous polynomial $\Phi(x,y)$ of degree $d = \deg \Phi(x,y) = \mathrm{ord}_P(a_{p^e})$ is, up to the multiplication by a nonzero constant, is in the following form 
$$\Phi(x,y) = x^r \cdot y^s \cdot (y -x)^t \cdot \psi(x,y)$$ 
satisfying the conditions below: 
\begin{enumerate} 
\setlength{\parskip}{0pt} 
\setlength{\itemsep}{0pt} 
\item[{\rm (i)}] 
$d = n \cdot p^e$ for some $n \in {\mathbb Z}_{> 1}$, 
\item[{\rm (ii)}] 
$r = \mathrm{ord}_{\xi_{H_x}}(a_{p^e}) = \mathrm{H}(\xi_{H_x}) \cdot p^e$ 
with $0 < r < p^e$, 
\item[{\rm (iii)}] 
$s = \lceil \mathrm{H}(\xi_{H_y}) \cdot p^e \rceil$ with $0 < s < p^e$, 
while we remark that 
\begin{itemize} 
\setlength{\parskip}{0pt} 
\setlength{\itemsep}{0pt} 
\item 
when $P$ is in configuration \textcircled{\footnotesize 4}, we have 
$s = \lceil \mathrm{H}(\xi_{H_y}) \cdot p^e \rceil \geq \mathrm{H}(\xi_{H_y}) \cdot p^e$, 
where the strict inequality may happen, and that 
\item 
when $P$ is in configuration \textcircled{\footnotesize 5}, we have $s = \mathrm{H}(\xi_{H_y}) \cdot p^e = \lceil \mathrm{H}(\xi_{H_y}) \cdot p^e \rceil$, 
\end{itemize} 
\item[{\rm (iv)}] 
$t = \mathrm{ord}_{\xi_{H_{y-x}}}\left(\Phi(x,y)\right) = l \cdot p^e$ for some $l \in {\mathbb Z}_{\geq 0}$ where $H_{y-x} = \{y - x = 0\}$, 
while we remark that $\psi(x,y)$ may be divisible by $x$ and/or $y$, but it is not divisible by $(y - x)$ according to the definition of $t$, and 
\item[{\rm (v)}] 
the homogeneous polynomial $\psi(x,y)$ of 
$\deg \psi(x,y) = u = d - (r + s + t)$ has 
the description below. 
\end{enumerate} 
 
\noindent \underline{\textbf{Description of $\psi(x,y)$}} 
 
\smallskip 
 
First we write $u = \alpha \cdot p^e + \beta$  with $\alpha, \beta \in {\mathbb Z}_{\geq 0} \text{ and } 0 \leq \beta < p^e$. 
Then there exist constants $\gamma_1, \gamma_2, \ldots, \gamma_{\alpha} \in k$ such that, writing the Taylor expansion 
$$\left(1 + \sum\nolimits_{i=1}^\alpha\gamma_i\widetilde{y}^{ip^e}\right)/(1 + \widetilde{y})^s = 1 + c_1\widetilde{y} + c_2\widetilde{y}^2 + \cdots + c_u\widetilde{y}^u + c_{u+1}\widetilde{y}^{u+1} \cdots,$$ 
we see that $\psi(x,y)$ is determined by the equality 
$$\psi(\widetilde{x},\widetilde{x} + \widetilde{x}\widetilde{y}) 
= \widetilde{x}^u \cdot (1 + c_1\widetilde{y} + c_2\widetilde{y}^2 + \cdots + c_u\widetilde{y}^u).$$ 
Since $\widetilde{y} = (y - x)/x$ and $\widetilde{x} = x$, this is equivalent to saying 
$$\psi(x,y) = x^u \cdot \left\{1 + c_1\left(\frac{y - x}{x}\right) + c_2\left(\frac{y - x}{x}\right)^2 + \cdots + c_u\left(\frac{y - x}{x}\right)^u \right\}.$$ 
 
Moreover, we see that the following inequality holds 
$$(\diamondsuit)\quad 0 \leq \beta < p^e - s.$$ 
We also have the following upper bounds for the increase 
\begin{align*} 
(\clubsuit 1) &\quad \{w\text{-}\rho_{H_{\widetilde{x}}}(\widetilde{P}) - \mathrm{inv}_{\mathrm{MON,\heartsuit}}(P)\} \cdot p^e \leq s < p^e, \\ 
(\clubsuit 2) &\quad \{w\text{-}\rho_{H_{\widetilde{x}}}(\widetilde{P}) - \mathrm{inv}_{\mathrm{MON,\heartsuit}}(P)\} \cdot p^e \leq p^{e - 1}. 
\end{align*} 
(We remark that the second inequality is called ``the Moh's inequality''.) 
\end{art_prop} 
\begin{art_rem} 
\item[(1)] 
Since $\psi(x,y)$ is not divisible by $(y - x)$, we conclude that $\psi(\widetilde{x},\widetilde{x} + \widetilde{x}\widetilde{y})/\widetilde{x}^u$ is not divisible by $\widetilde{y}$ and hence that it starts with the nonzero constant term $c_0$. 
We multiply $1/c_0$ so that this constant term becomes equal to $1$, i.e., $c_0 = 1$.  This is the meaning of ``$\Phi(x,y)$ is, up to the multiplication by a nonzero constant, in the following form'' 
as stated above. 
\item[(2)] 
What we actually do in the following proof is to present the characterization of the initial form $\mathrm{In}_P(a_{p^e}) = \Phi(x,y)$, which gives rise to the inequality $(\odot)$. 
\end{art_rem} 
\begin{proof}[Proof of Proposition \ref{MHjump-prop}]\ 
 
\noindent \textbf{Step 1.} Preliminary computation of the invariants after the transformation $\pi$. 
 
\smallskip 
 
Set $\widetilde{h} = \pi^*(h)/x^{p^e} = \widetilde{z}^{p^e} + \widetilde{a_1}\widetilde{z}^{p^e - 1} + \cdots + \widetilde{a_{p^e-1}}\widetilde{z} + \widetilde{a_{p^e}}$, where $\widetilde{a_i} = \pi^*(a_i)/x^i$ for $i = 1, \ldots, p^e -1, p^e$.  We write $\widetilde{a_{p^e}} = \widetilde{x}^{d - p^e} \cdot \left\{\pi^*(\Phi(x,y))/x^d + \widetilde{x} \cdot \omega_{\widetilde{x}}(\widetilde{x},\widetilde{y})\right\}$, where $0 \neq \pi^*(\Phi(x,y))/x^d \in k[\widetilde{y}]$ and $\omega_{\widetilde{x}}(\widetilde{x},\widetilde{y}) \in k[[\widetilde{x},\widetilde{y}]]$.  We observe that $\widetilde{h}$ is well-adapted at $\xi_{\widetilde{x}}$ with respect to $(\widetilde{z},\widetilde{x},\widetilde{y})$, since 
\begin{align*} 
\mathrm{Slope}_{\widetilde{h},(\widetilde{z},\widetilde{x},\widetilde{y})}(\xi_{H_{\widetilde{x}}}) 
&= \mathrm{ord}_{\xi_{H_{\widetilde{x}}}}(\widetilde{a_{p^e}})/p^e = (d - p^e)/p^e 
= \mathrm{ord}_P(a_{p^e})/p^e - 1 
\\& 
< \mu(P) - 1 = \mu(\xi_{H_{\widetilde{x}}}) 
\quad\quad(\text{since $P$ is bad}), 
\end{align*} 
and since $\mathrm{In}_{\xi_{H_{\widetilde{x}}}}(\widetilde{a_{p^e}}) = \widetilde{x}^{d-p^e} \cdot \pi^*(\Phi(x,y))/x^d = \pi^*(\Phi(x,y))/x^{p^e}$ is not a $p^e$-th power, a fact which follows easily from the fact that $\Phi(x,y)$ is not a $p^e$-th power. 
 
We remark that the requirement $\mathrm{ord}_{\widetilde{P}}(\widetilde{a_i}) > i \hskip.01in \text{ for }\hskip.01in i = 1, \ldots, p^e - 1$ follows from the conditions that $\widetilde{P} \in \mathrm{Sing}({\mathcal R})$ and that the invariant $\sigma$ stays the same.  The inequality $\mathrm{ord}_{\widetilde{P}}(\widetilde{a_{p^e}}) \geq p^e$ follows from the same conditions above.  However, the equality can happen when the term $x^{2p^e}$ shows up with a nonzero coefficient $\gamma$ in $a_{p^e}$, and then we have to replace $\widetilde{z}$ with $\widetilde{z}' = \widetilde{z} + \gamma^{1/p^e}\widetilde{x}$ in order to have the strict inequality.  However, it is easy to see that this replacement does not affect the computation of $w\text{-}\rho_{\widetilde{x}}(\widetilde{P})$, and hence we ignore this replacement. 
 
Therefore, we conclude that $\mathrm{H}(\xi_{H_{\widetilde{x}}}) = (d - p^e)/p^e$. 
 
We also compute 
\begin{align*} 
w\text{-}\rho_{\widetilde{x}}(\widetilde{P}) 
&= \mathrm{res\text{-}ord}^{(p^e)}_{\widetilde{P}}\left(\mathrm{In}_{\xi_{H_{\widetilde{x}}}}(\widetilde{a_{p^e}})\right)/p^e - \mathrm{ord}_{\widetilde{P}}\left(\widetilde{M}_{\mathrm{tight}}\right) \\ 
&= \mathrm{res\text{-}ord}^{(p^e)}_{\widetilde{P}}\left(\widetilde{x}^{d - p^e} \cdot \pi^*(\Phi(x,y))/x^d\right)/p^e - \mathrm{H}(\xi_{H_{\widetilde{x}}}) \\ 
&= \mathrm{res\text{-}ord}^{(p^e)}_{\widetilde{P}}\left(\widetilde{x}^{d - p^e} \cdot \pi^*(\Phi(x,y))/x^d\right)/p^e - (d - p^e)/p^e. 
\end{align*} 
\textbf{Step 2.} Check the conditions (i), (ii), (iii), (iv) on the numbers $d, r, s, t$. 
 
\smallskip 
 
Now write the homogeneous polynomial $\mathrm{In}_P(a_{p^e}) = \Phi(x,y)$ of degree $d$ in the following form $\Phi(x,y) = x^r \cdot y^s \cdot (y - x)^t \cdot \psi(x,y)$, where 
\begin{enumerate} 
\renewcommand{\labelenumi}{(\roman{enumi})} 
\setlength{\parskip}{0pt} 
\setlength{\itemsep}{0pt} 
\item 
$d = \deg \Phi(x,y) = \mathrm{ord}_P(a_{p^e})$, 
\item 
$r = \mathrm{ord}_{\xi_{H_x}}(a_{p^e}) = \mathrm{H}(\xi_{H_x}) \cdot p^e$, 
\item 
$s = \lceil \mathrm{H}(\xi_{H_y}) \cdot p^e \rceil$, 
\item 
$t = \mathrm{ord}_{\xi_{H_{y-x}}}\left(\Phi(x,y)\right)$ (which implies $\psi(x,y)$ is not divisible by $(y - x)$), and 
\item 
$\psi(x,y)$ is a homogeneous polynomial of $\deg \psi(x,y) = u = d - (r + s + t)$. 
\end{enumerate} 
We compute $\pi^*(\Phi(x,y))/x^d = \Phi(\widetilde{x},\widetilde{x} + \widetilde{x}\widetilde{y})/\widetilde{x}^d = \Phi(1,1 + \widetilde{y}) = 1^r \cdot (1 + \widetilde{y})^s \cdot  \widetilde{y}^t \cdot \psi(1,1 + \widetilde{y})$, where $\psi(1,1 + \widetilde{y})$ is not divisible by $\widetilde{y}$. 
\begin{claim} 
The numbers $d$ and $t$ are both multiples of $p^e$, i.e., 
$d = n\cdot p^e$ and $t = l\cdot p^e$ for some $n,l \in {\mathbb Z}_{\geq 0}$. 
\end{claim}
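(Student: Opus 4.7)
The plan is to deduce both divisibility statements simultaneously by a single argument by contradiction driven by the esoteric inequality $(\odot)$. The strategy is to pin down the residual order
\[
N := \mathrm{res\text{-}ord}^{(p^e)}_{\widetilde{P}}\bigl(\mathrm{In}_{\xi_{H_{\widetilde{x}}}}(\widetilde{a_{p^e}})\bigr)
\]
and observe that the failure of either congruence condition collapses $N$ down to the ordinary order, thereby contradicting $(\odot)$.

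From Step 1 of the proof of Proposition \ref{MHjump-prop} we already have
\[
\mathrm{In}_{\xi_{H_{\widetilde{x}}}}(\widetilde{a_{p^e}}) = \widetilde{x}^{d-p^e}\,\Phi(1,1+\widetilde{y}) = \widetilde{x}^{d-p^e}\,\widetilde{y}^{\,t}\,Q(\widetilde{y}),
\]
where $Q(\widetilde{y}) := (1+\widetilde{y})^{s}\,\psi(1,1+\widetilde{y})$ is a polynomial with $Q(0) = \psi(1,1) \neq 0$, the nonvanishing coming from $\psi(x,y)$ being coprime to $(y-x)$. Every monomial in the Taylor expansion has $\widetilde{x}$-exponent exactly $d-p^e$ and $\widetilde{y}$-exponent $t+k$ for some $k\geq 0$; because the base field is algebraically closed, such a monomial is a $p^e$-th power if and only if $p^e\mid d$ and $p^e\mid (t+k)$.

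Next I would rewrite $(\odot)$ in terms of $N$. Using $\mathrm{H}(P) = d/p^e$ (valid because $P$ is bad), $\mathrm{ord}_P(M_{\mathrm{tight}}) = (r+s'_{*})/p^e$ with $s'_{*} := \mathrm{H}(\xi_{H_y})\cdot p^e$, and $\mathrm{H}(\xi_{H_{\widetilde{x}}}) = (d-p^e)/p^e$, the inequality $(\odot)$ rearranges to
\[
N > 2d - p^e - r - s'_{*}.
\]
Suppose toward contradiction that either $p^e\nmid d$, or $p^e\mid d$ but $p^e\nmid t$. In either alternative, the bottom monomial $Q(0)\,\widetilde{x}^{d-p^e}\,\widetilde{y}^{\,t}$ fails to be a $p^e$-th power (for the first reason or the second, respectively), so $N$ coincides with the ordinary order, $N = (d-p^e) + t$. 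Plugging this into the displayed bound and invoking the decomposition $d = r+s+t+u$ yields
\[
t > d - r - s'_{*} = t + (s - s'_{*}) + u,
\]
but $s = \lceil s'_{*}\rceil \geq s'_{*}$ and $u \geq 0$ force the right-hand side to be at least $t$, a contradiction.

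The conceptually delicate point is translating $(\odot)$ into the sharp bound $N > 2d - p^e - r - s'_{*}$: this bookkeeping mixes data living at $P$ (namely $d$, $r$, $s'_{*}$) with data living at $\widetilde{P}$ (the residual order $N$ and $\mathrm{H}(\xi_{H_{\widetilde{x}}})$), and the ceiling operation defining $s$ is precisely what supplies the slack $s - s'_{*} \geq 0$ needed to close the contradiction uniformly across configuration \textcircled{\footnotesize 4} (where the ceiling may be strict) and configuration \textcircled{\footnotesize 5} (where $s=s'_{*}$). Once this bookkeeping is in place, the residual-order analysis itself is routine and both divisibilities fall out together.
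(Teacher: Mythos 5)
Your proof is correct and follows essentially the same route as the paper's: you set up the quantity $N$, observe that failure of either divisibility forces the lowest-degree monomial $Q(0)\,\widetilde{x}^{d-p^e}\widetilde{y}^t$ to be non-$p^e$-power so that $N$ drops to the ordinary order $(d-p^e)+t$, substitute into the rearranged form of $(\odot)$, and close the contradiction using $d=r+s+t+u$ with $u\geq 0$ and $s\geq s'_*$ — precisely the chain of inequalities the paper uses. The only differences are notational ($N$, $s'_*$) and a slightly more explicit bookkeeping of how $(\odot)$ rearranges; the content is identical.
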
 
\begin{proof}[Proof of the claim.] 
Suppose either that $d$ is not a multiple of $p^e$ or that $t$ is not a multiple of $p^e$. 
Then we compute 
\begin{align*} 
\lefteqn{ 
w\text{-}\rho_{\widetilde{x}}(\widetilde{P}) 
= \mathrm{res\text{-}ord}^{(p^e)}_{\widetilde{P}}\left(\widetilde{x}^{d - p^e} \cdot \pi^*(\Phi(x,y))/x^d\right) - (d - p^e)/p^e 
}\\ 
&\quad 
= \mathrm{res\text{-}ord}^{(p^e)}_{\widetilde{P}}\left(\widetilde{x}^{d - p^e} \cdot 1^r \cdot (1 + \widetilde{y})^s \cdot \widetilde{y}^t \cdot \psi(1,1 + \widetilde{y})\right)/p^e - (d - p^e)/p^e \\ 
&\quad 
= \mathrm{ord}_{\widetilde{P}}(\widetilde{x}^{d - p^e} \cdot \widetilde{y}^t)/p^e - (d - p^e)/p^e = ((d - p^e) + t)/p^e - (d - p^e)/p^e = t/p^e \\ 
&\quad 
\leq \left(d - (r + s)\right)/p^e \leq \mathrm{ord}_P(a_{p^e})/p^e - \left(\mathrm{H}(\xi_{H_x}) + \mathrm{H}(\xi_{H_y})\right) \\ 
&\quad 
= \mathrm{H}(P) - \mathrm{ord}_P(M_{\text{tight}}), 
\end{align*} 
contradicting the inequality $(\odot)$.  This completes the proof of the claim. 
\end{proof} 
 
Thanks to the claim above and the condition that 
$d = \mathrm{ord}_P(a_{p^e}) > p^e$, 
we have 
$d \in p^e\cdot{\mathbb Z}_{>1}$ and $t \in p^e\cdot{\mathbb Z}_{\geq0}$, 
confirming the conditions (i) and (iv). 
\begin{claim} 
The number $s$ is not a multiple of $p^e$. 
\end{claim}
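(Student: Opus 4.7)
The plan is to argue by contradiction, in the same spirit as the preceding claim: assume $s \equiv 0 \pmod{p^e}$ and combine this with the integrality of $d = np^e$ and $t = lp^e$ established in Claim 1 to obtain an upper bound for $w\text{-}\rho_{\widetilde{x}}(\widetilde{P})$ that contradicts the inequality $(\odot)$.

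First I would exploit Frobenius in characteristic $p$. If $s = kp^e$, then $(1+\widetilde{y})^s = (1+\widetilde{y}^{p^e})^k$ is a polynomial in $\widetilde{y}^{p^e}$, and since the base field is algebraically closed it is a $p^e$-th power. Together with $\widetilde{y}^t = \widetilde{y}^{lp^e}$ and $\widetilde{x}^{d-p^e} = \widetilde{x}^{(n-1)p^e}$, the entire ``outer'' factor $\widetilde{x}^{(n-1)p^e}(1+\widetilde{y})^s\widetilde{y}^t$ in $\widetilde{x}^{d-p^e}\Phi(1,1+\widetilde{y}) = \widetilde{x}^{(n-1)p^e}(1+\widetilde{y})^s\widetilde{y}^t\psi(1,1+\widetilde{y})$ becomes a $p^e$-th power in $k[[\widetilde{x},\widetilde{y}]]$, so that every non-$p^e$-th power contribution to the Taylor expansion is inherited from $\psi(1,1+\widetilde{y})$.

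Next I would verify that $\psi(1,1+\widetilde{y})$ is itself not a $p^e$-th power in $k[\widetilde{y}]$. If it were, then $\Phi(1,1+\widetilde{y})$ would be a $p^e$-th power as well, and after the shift $z = 1+\widetilde{y}$ so would $\Phi(1,z)$; a standard dehomogenization argument, valid precisely because $\deg\Phi = np^e$, then lifts a $p^e$-th root $g(z)$ of $\Phi(1,z)$ to a homogeneous $p^e$-th root $x^{n}g(y/x)$ of $\Phi(x,y)$, contradicting well-adaptedness of $h$ at the bad point $P$. Consequently, writing $\psi(1,1+\widetilde{y}) = 1 + c_1\widetilde{y} + \cdots + c_u\widetilde{y}^u$, there exists a smallest index $i_{\min}$ with $c_{i_{\min}} \neq 0$, $i_{\min} \not\equiv 0 \pmod{p^e}$, and $1 \leq i_{\min} \leq u$.

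Finally I would read off the res-ord. Since the outer factor takes the form $\widetilde{x}^{(n-1)p^e}\widetilde{y}^{t}\bigl(1 + \sum_{j\geq 1} b_j \widetilde{y}^{jp^e}\bigr)$ with leading term $1$, a short bookkeeping (using that $c_i = 0$ for every $i < i_{\min}$ with $i \not\equiv 0 \pmod{p^e}$) shows that the lowest non-$p^e$-th power monomial in the product is $c_{i_{\min}} \widetilde{x}^{(n-1)p^e}\widetilde{y}^{t+i_{\min}}$, whence
\[
w\text{-}\rho_{\widetilde{x}}(\widetilde{P}) = (t + i_{\min})/p^e \leq (t + u)/p^e = n - (r+s)/p^e.
\]
Combining this with the obvious lower bound $s \geq \mathrm{H}(\xi_{H_y})\cdot p^e$ coming from the ceiling definition of $s$, we deduce $w\text{-}\rho_{\widetilde{x}}(\widetilde{P}) \leq n - r/p^e - \mathrm{H}(\xi_{H_y}) = \mathrm{H}(P) - \mathrm{ord}_P(M_{\mathrm{tight}}) = \mathrm{inv}_{\mathrm{MON,\heartsuit}}(P)$, contradicting $(\odot)$. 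The main obstacle, and the conceptual core of the argument, is the dehomogenization step that transfers the non-$p^e$-th power property from $\Phi(x,y)$ to $\Phi(1,1+\widetilde{y})$; once that is in hand, the res-ord computation is essentially parallel to the one carried out in Claim 1.
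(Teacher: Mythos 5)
Your proof is correct and follows essentially the same contradiction argument as the paper: assuming $s \equiv 0 \pmod{p^e}$, the factor $(1+\widetilde{y})^s$ (together with $\widetilde{x}^{d-p^e}$ and $\widetilde{y}^t$) becomes a $p^e$-th power, forcing $\psi(1,1+\widetilde{y})$ to carry the non-$p^e$-th-power contribution, and the resulting bound $w\text{-}\rho_{\widetilde{x}}(\widetilde{P}) \leq (t+u)/p^e = (d-(r+s))/p^e \leq \mathrm{inv}_{\mathrm{MON,\heartsuit}}(P)$ contradicts $(\odot)$. You spell out the dehomogenization step and locate the exact residual order $i_{\min}$ where the paper merely bounds by the degree of $\widetilde{y}^t\psi(1,1+\widetilde{y})$, but the substance is identical.
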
 
\begin{proof}[Proof of the claim.] 
Suppose that $s$ is a multiple of $p^e$.  Then since $d$ and $t$ are both multiples of $p^e$ and since $\pi^*(\Phi(x,y))/x^d = \Phi(1,1 + \widetilde{y}) = 1^r \cdot (1 + \widetilde{y})^s \cdot  \widetilde{y}^t \cdot \psi(1,1 + \widetilde{y})$ is not a $p^e$-th power because $\Phi(x,y)$ is not, we would conclude that $\psi(1,1 + \widetilde{y})$ is not a $p^e$-th power.  Then we compute 
\begin{align*} 
\lefteqn{ 
w\text{-}\rho_{\widetilde{x}}(\widetilde{P}) 
= \mathrm{res\text{-}ord}^{(p^e)}_{\widetilde{P}}\left(\widetilde{x}^{d - p^e} \cdot \pi^*(\Phi(x,y))/x^d\right) - (d - p^e)/p^e 
}\\ 
&\quad 
= \mathrm{res\text{-}ord}^{(p^e)}_{\widetilde{P}}\left(\widetilde{x}^{d - p^e} \cdot 1^r \cdot (1 + \widetilde{y})^s \cdot \widetilde{y}^t \cdot \psi(1,1 + \widetilde{y})\right)/p^e - (d - p^e)/p^e 
\\ 
&\quad 
= \mathrm{res\text{-}ord}^{(p^e)}_{\widetilde{P}}\left(\widetilde{x}^{d - p^e} \cdot \widetilde{y}^t \cdot \psi(1,1 + \widetilde{y})\right)/p^e - (d - p^e)/p^e 
\\ 
&\quad 
= \mathrm{res\text{-}ord}^{(p^e)}_{\widetilde{P}}\left(\widetilde{y}^t \cdot \psi(1,1 + \widetilde{y})\right)/p^e \leq \left(t +\deg \psi(x,y)\right)/p^e = \left(d - (r + s)\right)/p^e 
\\ 
&\quad 
\leq \mathrm{ord}_P(a_{p^e})/p^e - \left(\mathrm{H}(\xi_{H_x}) + \mathrm{H}(\xi_{H_y})\right) = \mathrm{H}(P) - \mathrm{ord}_P(M_{\mathrm{tight}}), 
\end{align*} 
contradicting the inequality $(\odot)$.  This completes the proof of the claim. 
\end{proof} 
 
Observe that $s = \lceil \mathrm{H}(\xi_{H_y}) \cdot p^e \rceil \leq p^e$.  In fact, if $s > p^e$, then $\mathrm{H}(\xi_{H_y}) \cdot p^e > p^e$ and hence $h_y = \mathrm{H}(\xi_{H_y}) > 1$.  But then we would choose the center of blow up $C = V(z,y)$ for the transformation $\pi$ 
(cf.~\ref{5.1} Description of the algorithm). 
This contradicts the description of our center to be the point $P$ 
as given in the situation. 
Combining the claim above with the inequality $s \leq p^e$, 
we have $0<s<p^e$, confirming the condition (iii). 
 
Suppose $r = \mathrm{H}(\xi_{H_x}) \cdot p^e \geq p^e$.  But then again we would choose the center of blow up $C = V(z,x)$ for the transformation $\pi$ 
(cf.~\ref{5.1} Description of the algorithm). 
This contradicts the description of our center to be the point $P$ 
as given in the situation. 
Suppose $r = 0$.  Then, since $r + s + t + u = s + l \cdot p^e + \alpha \cdot p^e + \beta = d = n \dot p^e$ 
and since $0 < s < p^e$ by (iii) and $0 \leq \beta < p^e$ by definition, 
we would conclude $s + \beta = p^e$. 
This contradicts the inequality $(\diamondsuit)$. 
Therefore, we have $0<r<p^e$, 
confirming the condition (ii). 
 
This finishes Step 2. 
 
\smallskip 
 
\noindent \textbf{Step 3.} Check the description of the polynomial $\psi(x,y)$. 
 
\smallskip 
 
First observe 
\begin{align*} 
\lefteqn{ 
\deg\left((y - x)^t \cdot \psi(x,y)\right) = d - (r + s) 
}\\ 
&\quad 
\leq d - (\mathrm{H}(\xi_{H_x}) + \mathrm{H}(\xi_{H_y})) \cdot p^e 
= \left(\mathrm{H}(P) - \mathrm{ord}_P(M_{\text{tight}})\right) \cdot p^e 
\\ 
&\quad 
< w\text{-}\rho_{\widetilde{x}}(\widetilde{P}) \cdot p^e = \mathrm{res\text{-}ord}^{(p^e)}_{\widetilde{P}}\left((1 + \widetilde{y})^s \cdot \widetilde{y}^t \cdot \psi(1,1 + \widetilde{y})\right). 
\end{align*} 
Since $t$ is a multiple of $p^e$, this implies 
$$u = \deg \psi(x,y) < \mathrm{res\text{-}ord}^{(p^e)}_{\widetilde{P}}\left((1 + \widetilde{y})^s \cdot \psi(1,1 + \widetilde{y})\right).$$ 
Therefore, writing $u = \alpha \cdot p^e + \beta$ with $\alpha, \beta \in {\mathbb Z}_{\geq 0}$ and $0 \leq \beta < p^e$, we conclude that there exist constants $\gamma_1, \gamma_2, \ldots, \gamma_{\alpha} \in k$ such that 
$$(1 + \widetilde{y})^s \cdot \psi(1,1 + \widetilde{y}) = 
1 + \sum\nolimits_{i=1}^\alpha\gamma_i\widetilde{y}^{ip^e} 
\text{ mod }(\widetilde{y}^{u + 1}).$$ 
(Note that we assume that the constant term of $\psi(1,1 + \widetilde{y})$, not divisible $\widetilde{y}$, is equal to $1$ (cf.~Remark 7 (1)).)  Now multiplying $1/(1 + \widetilde{y})^s$ to both sides of the equation above, and writing the Taylor expansion of the right hand side as 
$$\left(1 + \sum\nolimits_{i=1}^\alpha\gamma_i\widetilde{y}^{ip^e}\right)/(1 + \widetilde{y})^s = 1 + c_1\widetilde{y} + c_2\widetilde{y}^2 + \cdots + c_u\widetilde{y}^u + c_{u+1}\widetilde{y}^{u+1} \cdots,$$ 
we conclude that the left hand side is equal to the truncation of the Taylor expansion up to degree $u$ 
$$\psi(\widetilde{x},\widetilde{x}+\widetilde{x}\widetilde{y})/\widetilde{x}^u 
= \psi(1,1 + \widetilde{y}) = 1 + c_1\widetilde{y} + c_2\widetilde{y}^2 + \cdots + c_u\widetilde{y}^u,$$ 
knowing that $\psi(1,1 + \widetilde{y})$ is a polynomial of degree at most $u$ in $\widetilde{y}$.  That is to say, we have 
$$\psi(\widetilde{x},\widetilde{x}+\widetilde{x}\widetilde{y}) 
= \widetilde{x}^u \cdot \left(1 + c_1\widetilde{y} + c_2\widetilde{y}^2 + \cdots + c_u\widetilde{y}^u\right).$$ 
Since $\widetilde{y} = (y - x)/x$ and $\widetilde{x} = x$, this is equivalent to saying 
$$\psi(x,y) = x^u \cdot \left\{1 + c_1\left(\frac{y - x}{x}\right) + c_2\left(\frac{y - x}{x}\right)^2 + \cdots + c_u\left(\frac{y - x}{x}\right)^u \right\}.$$ 
 
\medskip 
 
\noindent \textbf{Step 4.} Check the inequalities $(\diamondsuit), (\clubsuit 1), (\clubsuit 2)$. 
\begin{claim} 
The following inequality holds:\qquad 
$(\diamondsuit)\quad 0 \leq \beta < p^e - s$. 
\end{claim}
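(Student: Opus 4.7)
The plan is to argue by contradiction: assume $\beta \geq p^e - s$. First, observe that the case $\beta = p^e - s$ is ruled out immediately, since substituting into the congruence $r + s + \beta \equiv 0 \pmod{p^e}$ (which follows from $r + s + t + u = d$ together with $t, d \in p^e \mathbb{Z}$) would force $r \equiv 0 \pmod{p^e}$, contradicting $0 < r < p^e$. Hence one may assume $\beta > p^e - s$, and combining with $0 < r, s, \beta < p^e$ and the congruence then forces $r + s + \beta = 2p^e$.

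The heart of the argument is an explicit computation of the Taylor coefficients of $F(\tilde y)/(1+\tilde y)^s = \sum_{j \geq 0} c_j \tilde y^j$ using the characteristic-$p$ identity $(1+\tilde y)^{-s} = (1+\tilde y)^{p^e - s}(1+\tilde y^{p^e})^{-1}$. Expansion yields: the coefficient of $\tilde y^{q p^e + r'}$ in $(1+\tilde y)^{-s}$ equals $(-1)^q \binom{p^e - s}{r'}$ when $0 \leq r' \leq p^e - s$, and vanishes when $p^e - s < r' < p^e$. Combining this with $c_l = \sum_{i=0}^\alpha \gamma_i \cdot [\text{coefficient of } \tilde y^{l - i p^e} \text{ in } (1+\tilde y)^{-s}]$, the hypothesis $\beta > p^e - s$ yields $c_{u+k} = 0$ for $1 \leq k < p^e - \beta$, while $c_{u + (p^e - \beta) + j} = (-1)^{\alpha + 1} \binom{p^e - s}{j} \cdot S$ for $0 \leq j \leq p^e - s$, where $S := \sum_{i=0}^\alpha \gamma_i (-1)^i$.

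The crucial step combines the recurrence $q_{u+k} = -\sum_{k' = 1}^{k} c_{u + k'} \binom{s}{k - k'}$ for the coefficients $q_j$ of $Q(\tilde y) := (1+\tilde y)^s \psi(1, 1+\tilde y) - F(\tilde y)$ with Vandermonde's identity $\sum_{j''} \binom{p^e - s}{j''} \binom{s}{j' - j''} = \binom{p^e}{j'}$. Since $\binom{p^e}{j'} \equiv 0 \pmod{p}$ for $0 < j' < p^e$, this forces $q_{u + k} = 0$ for every $k \in [1, s] \setminus \{p^e - \beta\}$. Hence the only potentially nonzero $q_j$ in the range $[u + 1, u + s]$ occurs at $j = u + (p^e - \beta) = (\alpha + 1) p^e$, which is itself a multiple of $p^e$.

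Since $F(\tilde y)$ is supported only at multiples of $p^e$ and $t$ is one as well, it follows that $\Phi(1, 1 + \tilde y) = (1+\tilde y)^s \tilde y^t \psi(1, 1+\tilde y)$ is a polynomial in $\tilde y^{p^e}$, hence (over the algebraically closed base field) a $p^e$-th power in $\tilde y$. A direct homogeneity argument then forces $\Phi(x, y)$ itself to be a $p^e$-th power, contradicting the well-adaptedness of $h$ at the bad point $P$, which requires $\mathrm{In}_P(a_{p^e}) = \Phi(x, y)$ not to be a $p^e$-th power. The main obstacle will be the careful bookkeeping in the characteristic-$p$ coefficient computations; once these are set up correctly, the Vandermonde cancellation delivers the contradiction cleanly.
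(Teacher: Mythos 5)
Your proof arrives at the correct contradiction — namely that $\beta \geq p^e - s$ would force $\Phi(x,y)$ to be a $p^e$-th power, contradicting well-adaptedness — but takes a substantially more elaborate route than the paper's. The paper's argument is a single degree count: after rewriting $(1+\widetilde y)^{-s} = (1+\widetilde y)^{-p^e}(1+\widetilde y)^{p^e-s}$, the truncation $\Gamma(\widetilde y^{p^e})$ of $\left(1 + \sum_i\gamma_i\widetilde y^{ip^e}\right)(1+\widetilde y)^{-p^e}$ at order $\alpha p^e$ times $(1+\widetilde y)^{p^e-s}$ has degree at most $\alpha p^e + p^e - s$, and the hypothesis $p^e - s \leq \beta$ makes this $\leq u$, so the outer truncation $[\cdot]_{\leq u}$ is vacuous and $\psi(1,1+\widetilde y) = \Gamma(\widetilde y^{p^e})(1+\widetilde y)^{p^e-s}$ exactly. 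Then $(1+\widetilde y)^s\psi = (1+\widetilde y)^{p^e}\Gamma(\widetilde y^{p^e})$ is manifestly a $p^e$-th power. Your version instead computes the individual coefficients $c_{u+k}$, invokes Vandermonde's convolution $\sum_{j''}\binom{p^e-s}{j''}\binom{s}{j'-j''}=\binom{p^e}{j'}$, and uses $\binom{p^e}{j'}\equiv 0 \pmod p$ for $0<j'<p^e$ to force the spurious coefficients $q_{u+k}$ to vanish. This coefficient-level argument is correct, and it essentially re-derives by hand the cancellation that the paper gets for free from the degree count; it does not yield any additional information here.

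There is one genuine issue. You dispose of the boundary case $\beta = p^e - s$ by deducing $r\equiv 0 \pmod{p^e}$ from the congruence $r+s+\beta\equiv 0$, and then invoking $0 < r < p^e$. But in the paper's own Step 2, the strict lower bound $r>0$ is itself \emph{derived from} the inequality $(\diamondsuit)$ — precisely the claim under proof. As written, your treatment of $\beta = p^e - s$ is therefore circular in the logical order the paper sets up. The fix is easy and you should make it: your Vandermonde argument stated for $\beta > p^e - s$ in fact extends verbatim to $\beta = p^e - s$ (the only potentially nonzero coefficient among $q_{u+1},\dotsc,q_{u+s}$ is again $q_{u+(p^e-\beta)}=q_{(\alpha+1)p^e}$, sitting at a multiple of $p^e$), as does the paper's degree count, so the separate congruence-based ruling-out of $\beta = p^e - s$ should simply be dropped. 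Relatedly, the intermediate deduction $r+s+\beta = 2p^e$ is never used later and can be removed.
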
 
\begin{proof}[Proof of the claim.] 
Assuming $p^e - s \leq \beta$, we would conclude that $\Phi(x,y)$ is a $p^e$-th power, which contradicts the condition 
described in the situation that 
$\mathrm{In}_P(a_{p^e}) = \Phi(x,y)$ is \emph{not} a $p^e$-th power. 
 
\smallskip 
 
First note that the Taylor expansion of $(1 + \widetilde{y})^{-p^e} = 1/(1 + \widetilde{y})^{p^e} = 1/(1 + \widetilde{y}^{p^e})$ involves only the powers of $\widetilde{y}^{p^e}$. 
Let us denote by $[f]_j$ the truncation of the Taylor expansion of $f$ 
up to order $j$. Then we compute 
\begin{align*} 
&\psi(1,1 + \widetilde{y}) 
= 1 + c_1\widetilde{y} + c_2\widetilde{y}^2 + \cdots + c_u\widetilde{y}^u 
= \left[ 
\left(1 + \sum\nolimits_{i=1}^\alpha\gamma_i\widetilde{y}^{ip^e}\right)/(1 + \widetilde{y})^s 
\right]_{\leq u} 
\\ 
&\quad 
= \left[ 
\left(1 + \sum\nolimits_{i=1}^\alpha\gamma_i\widetilde{y}^{ip^e}\right)(1 + \widetilde{y})^{- p^e}(1 + \widetilde{y})^{p^e - s} 
\right]_{\leq u} 
= \left[ 
\Gamma(\widetilde{y}^{p^e}) \cdot (1 + \widetilde{y})^{p^e - s} 
\right]_{\leq u} 
\end{align*} 
where 
$\Gamma(\widetilde{y}^{p^e})=\left[ 
\left(1 + \sum_{i=1}^\alpha\gamma_i\widetilde{y}^{ip^e}\right) 
(1 + \widetilde{y})^{- p^e} 
\right]_{\leq \alpha p^e}$ is a polynomial of $\widetilde{y}^{p^e}$. 
But if $p^e - s \leq \beta$ and hence $\alpha \cdot p^e + p^e - s \leq \alpha \cdot p^e + \beta = u$, then we do not have to take the truncation (up to degree $u$) in the last term, 
i.e., we have $\psi(1,1 + \widetilde{y}) = \Gamma(\widetilde{y}^{p^e}) \cdot (1 + \widetilde{y})^{p^e - s}$.  Therefore, we conclude that 
\begin{align*} 
\pi^*(\Phi(x,y))/x^d &= \Phi(1,1 + \widetilde{y}) = 1^r \cdot (1 + \widetilde{y})^s \cdot  \widetilde{y}^t \cdot \psi(1,1 + \widetilde{y}) \\ 
&= (1 + \widetilde{y})^s \cdot  \widetilde{y}^t \cdot \Gamma(\widetilde{y}^{p^e}) \cdot (1 + \widetilde{y})^{p^e - s} = (1 + \widetilde{y})^{p^e} \cdot \widetilde{y}^t \cdot \Gamma(\widetilde{y}^{p^e}) 
\end{align*} 
is a $p^e$-th power, since $t = l \cdot p^e$ for some $l \in {\mathbb Z}_{\geq 0}$ by (iv).  This implies that $\Phi(x,y)$ is a $p^e$-th power, since $d$ is a multiple of $p^e$ by (i). 
This contradicts the condition that $\Phi(x,y)$ is not a $p^e$-th power. 
 
This completes the proof of the inequality $(\diamondsuit)$. 
\end{proof} 
\begin{claim} 
The following inequality holds: 
$$(\clubsuit 1)  \quad 
\{w\text{-}\rho_{\widetilde{x}}(\widetilde{P}) - \mathrm{inv}_{\mathrm{MON,\heartsuit}}(P)\} \cdot p^e \leq s < p^e.$$ 
\end{claim}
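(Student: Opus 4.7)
The plan is to follow the explicit computations already made in Steps~1--3 of this proof, reducing $(\clubsuit 1)$ to a pure degree estimate on the polynomial $(1+\widetilde{y})^s\,\psi(1,1+\widetilde{y})$. First I would combine Step~1 with the factorization from Steps~2--3 to obtain an explicit expression for $w\text{-}\rho_{\widetilde{x}}(\widetilde{P})\cdot p^e$. Step~1 gives
$$
w\text{-}\rho_{\widetilde{x}}(\widetilde{P}) \cdot p^e
= \mathrm{res\text{-}ord}^{(p^e)}_{\widetilde{P}}\bigl(\widetilde{x}^{d-p^e}\cdot \pi^*(\Phi)/x^d\bigr) - (d-p^e),
$$
and $\pi^*(\Phi)/x^d = (1+\widetilde{y})^s\,\widetilde{y}^t\,\psi(1,1+\widetilde{y})$. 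By (i) and (iv) the exponents $d-p^e = (n-1)p^e$ and $t=lp^e$ are both multiples of $p^e$, so the monomial $\widetilde{x}^{d-p^e}\widetilde{y}^t$ is itself a $p^e$-th power. Since a monomial $\widetilde{x}^a\widetilde{y}^b$ is a $p^e$-th power precisely when $p^e\mid a$ and $p^e\mid b$, multiplication by a $p^e$-th power monomial preserves the $p^e$-th-power property of a monomial term-by-term; hence residual order is additive under this multiplication, and I obtain
$$
w\text{-}\rho_{\widetilde{x}}(\widetilde{P})\cdot p^e = t + \mathrm{res\text{-}ord}^{(p^e)}_{\widetilde{P}}\bigl((1+\widetilde{y})^s\,\psi(1,1+\widetilde{y})\bigr).
$$

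Second, I would bound $\mathrm{inv}_{\mathrm{MON,\heartsuit}}(P) \cdot p^e$ from below. Since $P$ is bad we have $\mathrm{H}(P)\cdot p^e = d$, while in both configurations \textcircled{\footnotesize 4} and \textcircled{\footnotesize 5} only $H_x$ and $H_y$ pass through $P$, so that $\mathrm{ord}_P(M_{\mathrm{tight}})\cdot p^e = r + \mathrm{H}(\xi_{H_y})\cdot p^e \leq r + s$ by the ceiling definition of $s$ in (iii) (equality is forced in configuration \textcircled{\footnotesize 5}, and may be strict in \textcircled{\footnotesize 4}). Using $d = r+s+t+u$, this yields $\mathrm{inv}_{\mathrm{MON,\heartsuit}}(P)\cdot p^e \geq t+u$, and therefore
$$
\{w\text{-}\rho_{\widetilde{x}}(\widetilde{P}) - \mathrm{inv}_{\mathrm{MON,\heartsuit}}(P)\}\cdot p^e
\leq \mathrm{res\text{-}ord}^{(p^e)}_{\widetilde{P}}\bigl((1+\widetilde{y})^s\,\psi(1,1+\widetilde{y})\bigr) - u.
$$

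Finally, $(\clubsuit 1)$ reduces to the bound $\mathrm{res\text{-}ord}^{(p^e)}_{\widetilde{P}}\bigl((1+\widetilde{y})^s\psi(1,1+\widetilde{y})\bigr)\leq s+u$, followed by $s<p^e$ (which is (iii)). The polynomial $(1+\widetilde{y})^s\psi(1,1+\widetilde{y})$ has degree at most $s+u$ in $\widetilde{y}$, since the homogeneity of $\psi(x,y)$ in degree $u$ forces $\deg_{\widetilde{y}}\psi(1,1+\widetilde{y})\leq u$. Residual order is always bounded by degree unless the polynomial is a $p^e$-th power; but if it were, multiplying by the $p^e$-th power $\widetilde{y}^t$ would exhibit $\pi^*(\Phi)/x^d$, and hence $\Phi(x,y)=\mathrm{In}_P(a_{p^e})$, as a $p^e$-th power, contradicting the well-adaptedness of $h$ at the bad point $P$. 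The only delicate ingredient in the whole argument is the additivity of residual order under multiplication by a $p^e$-th power monomial used in the first paragraph; once that bookkeeping is in place, everything else is a routine degree count.
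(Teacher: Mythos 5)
Your proof is correct and follows essentially the same route as the paper's: express $w\text{-}\rho_{\widetilde{x}}(\widetilde{P})\cdot p^e$ as a residual order, bound the residual order by the degree $s+t+u$ of $(1+\widetilde{y})^s\widetilde{y}^t\psi(1,1+\widetilde{y})$ (justified because that polynomial is not a $p^e$-th power), bound $\mathrm{inv}_{\mathrm{MON,\heartsuit}}(P)\cdot p^e$ from below by $d-(r+s)=t+u$ using $s\geq\mathrm{H}(\xi_{H_y})\cdot p^e$, and close with $s<p^e$ from condition (iii). The only cosmetic difference is that you factor $\widetilde{y}^t$ out separately via the observation that residual order is additive under multiplication by a $p^e$-th power monomial, whereas the paper keeps the $\widetilde{y}^t$ inside the degree count; you are also slightly more explicit about why the degree bound on residual order is legitimate.
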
 
\begin{proof}[Proof of the claim.] 
Using the same notation as in Step 1, we compute 
\begin{align*} 
\lefteqn{ 
\{w\text{-}\rho_{\widetilde{x}}(\widetilde{P}) - \mathrm{inv}_{\mathrm{MON,\heartsuit}}(P)\} \cdot p^e 
}\\ 
&\quad 
= \mathrm{res\text{-}ord}^{(p^e)}_{\widetilde{P}}\left((1 + \widetilde{y})^s \cdot \widetilde{y}^t \cdot \psi(1,1 + \widetilde{y})\right) - \left(\mathrm{ord}_P(a_{p^e}) 
- \mathrm{ord}_P(M_{\text{tight}})\cdot p^e\right) 
\\ 
&\quad 
= \mathrm{res\text{-}ord}^{(p^e)}_{\widetilde{P}}\left((1 + \widetilde{y})^s \cdot \widetilde{y}^t \cdot \psi(1,1 + \widetilde{y})\right) - \left(d - (r + \mathrm{H}(\xi_y) \cdot p^e)\right) \\ 
&\quad 
\leq (s + t + u) - (d - (r+s)) = (s + t + u) - (t + u) = s, 
\end{align*} 
proving the inequality $(\clubsuit1)$. 
\end{proof} 
 
Set $v_o = \min \{v \geq u+1 \mid c_v \neq 0\}$, 
$w_o = \min\{w \geq \beta + 1\mid \binom{p^e - s}{w} \neq 0\}$ and 
$$ 
\left(1 + \sum\nolimits_{i=1}^\alpha\gamma_i\widetilde{y}^{ip^e}\right) 
\cdot (1 + \widetilde{y})^{- p^e} 
= 1 + \epsilon_1\widetilde{y}^{p^e} + \epsilon_2\widetilde{y}^{2p^e} 
+\dotsb+ \epsilon_{\alpha}\widetilde{y}^{\alpha p^e} + \epsilon_{\alpha+1}\widetilde{y}^{(\alpha + 1)p^e} + \cdots. 
$$ 
The following claim is used for the proof of 
the inequality $(\clubsuit2)$. 
\begin{claim} 
The following inequalities hold. 
\item[\rm (1)] $\epsilon_\alpha\neq0$. 
\item[\rm (2)] $w_o - \beta \leq p^{e-1}$. 
\item[\rm (3)] 
$\alpha \cdot p^e \leq \alpha \cdot p^e + \beta = u < v_o \leq \alpha 
 \cdot p^e + (p^e - s) < (\alpha + 1) \cdot p^e$. 
\item[\phantom{\rm (3)}] 
In particular, we have $v_o \not\equiv 0 \text{ mod }p^e$.  Moreover, we have $v_o - u \leq p^{e-1}$. 
\end{claim}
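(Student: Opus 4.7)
I dispatch the three parts in the order (1), (3), (2), since (3) is a direct computation from (1) together with (2), while (1) and (2) each require an independent argument. Throughout I make repeated use of the characteristic-$p$ identity $(1+\widetilde{y})^{p^e}=1+\widetilde{y}^{p^e}$ and the fact that $k$ is perfect.

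For (1) I argue by contradiction. Setting $T=\widetilde{y}^{p^e}$, the definition of $\Theta$ becomes $\Theta=(1+\sum_{i=1}^{\alpha}\gamma_i T^i)/(1+T)$, and matching coefficients in $(1+T)\sum_j \epsilon_j T^j = 1+\sum_i \gamma_i T^i$ produces the recursion $\epsilon_j=-\epsilon_{j-1}$ for $j>\alpha$. Hence $\epsilon_\alpha=0$ forces $\epsilon_j=0$ for every $j\geq\alpha$, i.e., $\Theta$ is a polynomial of $\widetilde{y}$-degree at most $(\alpha-1)p^e$. Then $\Theta\cdot(1+\widetilde{y})^{p^e-s}$ has degree at most $\alpha p^e - s < u$, so the truncation in the defining identity for $\psi(1,1+\widetilde{y})$ is superfluous, and the exact equality $\psi(1,1+\widetilde{y})=\Theta(\widetilde{y})(1+\widetilde{y})^{p^e-s}$ yields $\Phi(1,1+\widetilde{y})=\widetilde{y}^t(1+\widetilde{y})^{p^e}\Theta(\widetilde{y})$, which, using $t=l p^e$ from (iv), is a polynomial purely in $\widetilde{y}^{p^e}$. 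Perfectness of $k$ then supplies a polynomial $Q$ with $\Phi(1,1+\widetilde{y})=Q(\widetilde{y})^{p^e}$ and $\deg Q\leq n$; substituting $\widetilde{y}=(y-x)/x$ and multiplying by $x^d=x^{np^e}$ gives $\Phi(x,y)=[x^n Q((y-x)/x)]^{p^e}$, contradicting the hypothesis that $\Phi$ is not a $p^e$-th power.

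For (3) I expand $c_v=\sum_j \epsilon_j \binom{p^e-s}{v-jp^e}$. For $v\in[\alpha p^e,(\alpha+1)p^e-1]$ and $j<\alpha$ one has $v-jp^e\geq p^e>p^e-s$, so only $j=\alpha$ survives and $c_v=\epsilon_\alpha\binom{p^e-s}{v-\alpha p^e}$. By (1) this is nonzero iff $v-\alpha p^e\in A:=\{w:\binom{p^e-s}{w}\not\equiv 0\bmod p\}$. Choosing $v=\alpha p^e+(p^e-s)$ and using $\binom{p^e-s}{p^e-s}=1$ shows $v_o\leq\alpha p^e+(p^e-s)<(\alpha+1)p^e$; the lower bound $v_o>u=\alpha p^e+\beta$ is by definition. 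The strict inclusion in $(\alpha p^e,(\alpha+1)p^e)$ yields $v_o\not\equiv 0\bmod p^e$, and the formula $v_o=\alpha p^e+w_o$ gives $v_o-u=w_o-\beta$, reducing the Moh-type bound in (3) to (2).

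The main obstacle is therefore (2), which is a purely combinatorial statement about $\binom{p^e-s}{\cdot}$ modulo $p$. The plan is induction on $e$ via Lucas's theorem. The base $e=1$ is immediate since $\binom{p-s}{\beta+1}$ is a quotient of integers all strictly less than $p$. For the inductive step, decompose $\beta=\beta_{e-1}p^{e-1}+\beta'$ with $0\leq\beta'<p^{e-1}$ and set $d_{e-1}=(p^e-s)_{e-1}$, $L=(p^e-s)-d_{e-1}p^{e-1}$. If $\beta_{e-1}<d_{e-1}$, take $w=(\beta_{e-1}+1)p^{e-1}\in A$ by Lucas; then $w-\beta=p^{e-1}-\beta'\leq p^{e-1}$. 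If $\beta_{e-1}=d_{e-1}$, the inequality $\beta<p^e-s$ forces $\beta'<L$ (in particular $L>0$), and writing $L=p^{e-1}-s'$ with $s'\in[1,p^{e-1}-1]$, the inductive hypothesis applied to $(e-1,s',\beta')$ produces $w'\in[\beta'+1,\beta'+p^{e-2}]$ with $\binom{L}{w'}\not\equiv 0$; then $w=d_{e-1}p^{e-1}+w'$ satisfies $w-\beta=w'-\beta'\leq p^{e-2}\leq p^{e-1}$.
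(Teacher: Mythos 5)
Your proof is correct, and for parts (1) and (3) it is essentially the paper's proof: both establish (1) by showing that $\epsilon_\alpha=0$ would force $\Phi(1,1+\widetilde{y})$ to lie in $k[\widetilde{y}^{p^e}]$ and hence make $\Phi$ a $p^e$-th power (you via the recursion $\epsilon_j=-\epsilon_{j-1}$ for $j>\alpha$, the paper via the resulting vanishing $c_v=0$ for $\alpha p^e\le v<(\alpha+1)p^e$ together with the degree bound from $(\diamondsuit)$), and both reduce (3) to (2) through the identity $v_o=\alpha p^e+w_o$ furnished by (1). Part (2) is where you genuinely diverge. The paper proves $w_o-\beta\le p^{e-1}$ by explicit digit bookkeeping: with $p$-adic expansions $p^e-s=\sum a_lp^l$ and $\beta=\sum\delta_lp^l$, it defines $l_o=\max\{l:a_l<\delta_l\}$ (or $-1$) and $l_1=\min\{l>l_o:a_l>\delta_l\}$ and asserts the closed formula $w_o=(\delta_{l_1}+1)p^{l_1}+\sum_{l>l_1}\delta_lp^l$, from which the bound $w_o-\beta\le p^{l_1}\le p^{e-1}$ is immediate. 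You instead run an induction on $e$, peeling off the top $p$-adic digit and either exhibiting an admissible $w$ directly (when the leading digits of $\beta$ and $p^e-s$ differ) or invoking the inductive hypothesis on the truncated data $(L,\beta')$ (when they agree). Both arguments rest on the same Lucas criterion $(\aleph)$. The paper's route yields an exact formula for $w_o$ but leaves its minimality as an unchecked "straightforward" assertion; your induction only needs to exhibit one admissible $w$ with $w-\beta\le p^{e-1}$ and so sidesteps the minimality verification, at the price of a case split. Your base-case phrasing ("a quotient of integers all strictly less than $p$") is a little loose — $\binom{p-s}{\beta+1}$ itself may well exceed $p$ — but the intended observation, that neither numerator nor denominator carries a factor of $p$, is correct.
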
 
\begin{proof}[Proof of the claim.] 
We use the same notation as in the proof of the inequality $(\diamondsuit)$. 
It follows from the definition of $c_i$ and $\epsilon_i$ that 
$$ 
1 + \sum\nolimits_{i=1}^\infty c_i\widetilde{y}^i 
= \left(1 + \sum\nolimits_{i=1}^\infty\epsilon_i\widetilde{y}^{ip^e}\right) 
(1 + \widetilde{y})^{p^e - s}. 
$$ 
\item[\rm (1)] 
Suppose $\epsilon_{\alpha} = 0$. 
Then we would have 
$$\sum\nolimits_{\alpha \cdot p^e \leq v < (\alpha + 1) \cdot p^e} 
c_v\widetilde{y}^v = \epsilon_{\alpha}\widetilde{y}^{\alpha p^e} 
(1 + \widetilde{y})^{p^e - s} = 0,$$ 
i.e., $c_v = 0$ for all $v$ with 
$\alpha \cdot p^e \leq v < (\alpha + 1) \cdot p^e$.  But this implies 
\begin{align*} 
&(1 + \widetilde{y})^s \cdot \psi(1,1 + \widetilde{y}) 
= (1 + \widetilde{y})^s \cdot (1 + c_1\widetilde{y} + c_2\widetilde{y}^2 + \cdots + c_u\widetilde{y}^u) \\ 
&\quad 
= (1 + \widetilde{y})^s \cdot \left\{\left(1 + \sum\nolimits_{i=1}^\alpha\gamma_i\widetilde{y}^{ip^e}\right)/(1 + \widetilde{y})^s - (c_{u+1}\widetilde{y}^{u+1} + \cdots)\right\}\\ 
&\quad 
= (1 + \widetilde{y})^s \cdot \left\{\left(1 + \sum\nolimits_{i=1}^\alpha\gamma_i\widetilde{y}^{ip^e}\right)/(1 + \widetilde{y})^s 
- (c_{(\alpha + 1) \cdot p^e}\widetilde{y}^{(\alpha + 1) \cdot p^e} 
+ \cdots)\right\}\\ 
&\quad 
= 1 + \sum\nolimits_{i=1}^\alpha\gamma_i\widetilde{y}^{ip^e} 
-  (1 + \widetilde{y})^s \cdot 
\sum\nolimits_{i=(\alpha + 1) \cdot p^e}^\infty 
c_{i}\widetilde{y}^{i}, 
\end{align*} 
where the equalities above are equalities as the Taylor expansions in $\widetilde{y}$. 
Since 
\begin{align*} 
\deg \left((1 + \widetilde{y})^s \cdot \psi(1,1 + \widetilde{y})\right) 
&\leq s + u = s + \alpha \cdot p^e + \beta \\ 
&< s + \alpha \cdot p^e + p^e - s = (\alpha + 1) \cdot p^e, 
\end{align*} 
where the second inequality follows from $(\diamondsuit)\ \beta < p^e - s$, we have the following equality as polynomials 
$$(1 + \widetilde{y})^s \cdot \psi(1,1 + \widetilde{y}) 
= 1 + \sum\nolimits_{i=1}^\alpha\gamma_i\widetilde{y}^{ip^e}$$ 
But then the equality above implies that 
$$ 
\pi^*(\Phi(x,y))/x^d 
= 1^r \cdot (1 + \widetilde{y})^s \cdot  \widetilde{y}^t \cdot \psi(1,1 + \widetilde{y}) 
= \widetilde{y}^t \cdot \left(1 + \sum\nolimits_{i=1}^\alpha\gamma_i\widetilde{y}^{ip^e}\right) 
$$ 
is a $p^e$-th power (since $t$ is a multiple of $p^e$ by (iv)).  Since $d$ is a multiple of $p^e$ by (i), this in turn implies that $\Phi(x,y)$ is a $p^e$-th power, 
contradicting the condition described in the situation.  This completes the proof of (1). 
\item[\rm (2)] 
We draw the attention of the reader to the following elementary fact about the binomial coefficients in $\mathrm{char}(k) = p > 0$ (cf.~\cite{K}): 
\begin{itemize} 
\setlength{\parskip}{0pt} 
\setlength{\itemsep}{0pt} 
\item[$(\aleph)$\quad] 
Suppose that two nonnegative integers $\kappa, m$ have $p$-adic expansions 
$\kappa = \sum_l a_lp^l$ and $m = \sum_l b_lp^l$.  Then we have 
$$\binom{\kappa}{m} = \prod\nolimits_l \binom{a_l}{b_l}.$$ 
Especially, we have 
$$\binom{\kappa}{m} \neq 0 \Longleftrightarrow a_l \geq b_l, \quad \forall l.$$ 
\end{itemize} 
Now write the $p$-adic expansions of $p^e - s$ and $\beta (< p^e - s)$ as $p^e - s = \sum_{l = 0}^{e - 1}a_lp^l$ and $\beta = \sum_{l = 0}^{e - 1}\delta_lp^l$.  Observe that, for $\beta + 1 \leq w \leq p^e - s$ with $p$-adic expansion $w = \sum_{l = 0}^{e-1}b_lp^l$, using $(\aleph)$ we have 
$\binom{p^e - s}{w} \neq 0 \Longleftrightarrow a_l \geq b_l\quad 0 \leq \forall l \leq e - 1$. 
Define 
$$l_o = 
\begin{cases} 
-1 &\text{if } \hskip.1in a_l \geq \delta_l,\quad 0 \leq \forall l \leq e - 1,\\ 
\max\{0 \leq l \leq e - 1\mid a_l < \delta_l\} &\text{otherwise}. 
\end{cases} 
$$ 
Set $l_1 = \min\{l_o + 1 \leq l \leq e - 1\mid a_l > {\delta}_l\}$. 
Note that $a_l \geq {\delta}_l \hskip.1in l_o + 1 \leq \forall l \leq e - 1$ by the definition of $l_o$, 
and that $\{l_o + 1 \leq l \leq e - 1\mid a_l > {\delta}_l\} \neq \emptyset$ 
since $p^e - s > \beta$.  Now it is straightforward to see that $w_o = ({\delta}_{l_1} + 1)p^{l_1} + \sum_{l_1 + 1 \leq l \leq e - 1}{\delta}_lp^l$ and hence that $w_o - \beta \leq p^{l_1} \leq p^{e-1}$. 
This completes the proof of (2). 
\item[(3)] 
Note that by (1) we have 
$v_o = \alpha \cdot p^e + w_o$. 
This implies the inequalities 
$$\alpha \cdot p^e \leq \alpha \cdot p^e + \beta 
= u < v_o \leq \alpha \cdot p^e + (p^e - s) 
<(\alpha+1) \cdot p^e,$$ 
and hence $v_o \not\equiv 0 \text{ mod }p^e$. 
The last inequality in (3) follows from (2), since 
$$v_o - u = (\alpha \cdot p^e + w_o) - (\alpha \cdot p^e + \beta) 
= w_o - \beta \leq p^{e-1}.$$ 
This completes the proof of (3). 
\end{proof} 
\begin{claim} 
The following inequality holds: 
$$ 
(\clubsuit 2)  \quad 
 \{w\text{-}\rho_{\widetilde{x}}(\widetilde{P}) - \mathrm{inv}_{\mathrm{MON,\heartsuit}}(P)\} \cdot p^e \leq p^{e - 1}. 
$$ 
\end{claim}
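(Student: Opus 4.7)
The plan is to reduce the inequality $(\clubsuit 2)$ to the estimate $v_o - u \leq p^{e-1}$ already proved in part (3) of the preparatory Claim, by pinning down exactly the value of $\mathrm{res\text{-}ord}^{(p^e)}_{\widetilde{P}}\left((1+\widetilde{y})^s \cdot \widetilde{y}^t \cdot \psi(1,1+\widetilde{y})\right)$. First, I would quote from the derivation already carried out in the proof of $(\clubsuit 1)$ the identity
$$\{w\text{-}\rho_{\widetilde{x}}(\widetilde{P}) - \mathrm{inv}_{\mathrm{MON,\heartsuit}}(P)\} \cdot p^e = \mathrm{res\text{-}ord}^{(p^e)}_{\widetilde{P}}\left((1+\widetilde{y})^s \cdot \widetilde{y}^t \cdot \psi(1,1+\widetilde{y})\right) - (t+u),$$
so that the task becomes showing this residual order equals $t + v_o$.

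Next I would exploit the explicit description of $\psi$ obtained in Step 3. By construction, $\psi(1,1+\widetilde{y}) = 1 + c_1\widetilde{y} + \cdots + c_u\widetilde{y}^u$ is exactly the degree-$u$ truncation of the Taylor expansion of $\left(1 + \sum_{i=1}^{\alpha}\gamma_i\widetilde{y}^{ip^e}\right)/(1+\widetilde{y})^s$. Multiplying by $(1+\widetilde{y})^s$ yields the decomposition
$$(1+\widetilde{y})^s \cdot \psi(1,1+\widetilde{y}) = \left(1 + \sum\nolimits_{i=1}^{\alpha}\gamma_i\widetilde{y}^{ip^e}\right) - (1+\widetilde{y})^s \cdot \sum\nolimits_{i \geq u+1} c_i\widetilde{y}^i.$$
The first summand is a polynomial whose nonzero monomials occur only in degrees that are multiples of $p^e$, and hence it contributes only $p^e$-th-power terms, which are invisible to $\mathrm{res\text{-}ord}^{(p^e)}$. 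Therefore the residual order is determined entirely by the correction term.

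Then I would observe that the correction term begins in degree $v_o = \min\{v \geq u+1 \mid c_v \neq 0\}$ with leading coefficient $-c_{v_o}$ (since $(1+\widetilde{y})^s$ has constant term $1$), and that by Claim (3) we have $v_o \not\equiv 0 \pmod{p^e}$. Consequently this leading term genuinely survives as a non-$p^e$-th-power contribution, and after multiplying by $\widetilde{y}^t$ (with $t = l \cdot p^e$ by (iv)) we conclude
$$\mathrm{res\text{-}ord}^{(p^e)}_{\widetilde{P}}\left((1+\widetilde{y})^s \cdot \widetilde{y}^t \cdot \psi(1,1+\widetilde{y})\right) = t + v_o.$$
Substituting back, $\{w\text{-}\rho_{\widetilde{x}}(\widetilde{P}) - \mathrm{inv}_{\mathrm{MON,\heartsuit}}(P)\} \cdot p^e = v_o - u$, and the bound $v_o - u \leq p^{e-1}$ is precisely the last inequality of Claim (3).

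The essential content has therefore been pre-packaged into the preparatory Claim, whose hard part was establishing (1) $\epsilon_\alpha \neq 0$ (so that $v_o$ sits in the range $(\alpha p^e, (\alpha+1)p^e)$ rather than being postponed to the next $p^e$-block) and (2) the Lucas-type bound $w_o - \beta \leq p^{e-1}$ on the binomial coefficients $\binom{p^e - s}{w}$ in characteristic $p$. Once those two ingredients are in place, the present claim $(\clubsuit 2)$ is a direct computation, and I expect no further obstacle.
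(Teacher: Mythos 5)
Your proposal follows essentially the same route as the paper: you decompose $(1+\widetilde{y})^s\psi(1,1+\widetilde{y})$ as the $p^e$-power polynomial $1+\sum_{i=1}^\alpha\gamma_i\widetilde{y}^{ip^e}$ minus the tail correction $(1+\widetilde{y})^s\sum_{i\geq u+1}c_i\widetilde{y}^i$, observe that the first summand is invisible to $\mathrm{res\text{-}ord}^{(p^e)}$, use Claim (3) to see that the tail's leading degree $v_o$ is not a multiple of $p^e$ (so it genuinely survives), and conclude that the residual order is $t+v_o$; the estimate $v_o-u\leq p^{e-1}$ from Claim (3) then closes the argument. This is exactly what the paper does.

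One small imprecision: the relation you quote from the proof of $(\clubsuit 1)$ is not an identity but an inequality. Indeed
$$
\{w\text{-}\rho_{\widetilde{x}}(\widetilde{P}) - \mathrm{inv}_{\mathrm{MON,\heartsuit}}(P)\}\cdot p^e
= \mathrm{res\text{-}ord}^{(p^e)}_{\widetilde{P}}\bigl(\cdots\bigr) - \bigl(d - (r+\mathrm{H}(\xi_{H_y})\cdot p^e)\bigr),
$$
and since $s=\lceil\mathrm{H}(\xi_{H_y})\cdot p^e\rceil\geq\mathrm{H}(\xi_{H_y})\cdot p^e$ (with possible strict inequality in configuration \textcircled{\footnotesize 4}), one has $d-(r+\mathrm{H}(\xi_{H_y})\cdot p^e)\geq d-(r+s)=t+u$. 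So the display should read $\leq$, and correspondingly your later ``substituting back'' yields $\leq v_o-u$, not $=v_o-u$. Since you are proving an upper bound this does not affect the conclusion, but as stated the equality sign is incorrect in configuration \textcircled{\footnotesize 4}.
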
 
\begin{proof}[Proof of the claim.] 
First remember that 
\begin{align*} 
\lefteqn{ 
\pi^*(\Phi(x,y))/x^d 
= 1^r \cdot (1 + \widetilde{y})^s \cdot  \widetilde{y}^t \cdot \psi(1,1 + \widetilde{y}) 
} 
\\ 
&\quad 
= \widetilde{y}^t \cdot (1 + \widetilde{y})^s \cdot (1 + c_1\widetilde{y} + \cdots + c_u\widetilde{y}^u) 
\\ 
&\quad 
= \widetilde{y}^t \cdot (1 + \widetilde{y})^s \cdot \left\{ 
\left(1 + \sum\nolimits_{i=1}^\alpha\gamma_i\widetilde{y}^{ip^e}\right) 
/(1 + \widetilde{y})^s - (c_{u+1}\widetilde{y}^{u+1} + \cdots)\right\} 
\\ 
&\quad 
= \widetilde{y}^t \cdot \left\{ 
\left(1 + \sum\nolimits_{i=1}^\alpha\gamma_i\widetilde{y}^{ip^e}\right) 
- (1 + \widetilde{y})^s(c_{u+1}\widetilde{y}^{u+1} + \cdots)\right\}. 
\end{align*} 
Therefore (3) of the claim above implies 
$$\mathrm{res\text{-}ord}^{(p^e)}_{\widetilde{P}}\left(\pi^*(\Phi(x,y))/x^d\right) = t + v_o.$$ 
Now, from the computation carried out in Step 1, we have 
\begin{align*} 
w\text{-}\rho_{\widetilde{x}}(\widetilde{P}) \cdot p^e 
&= \mathrm{res\text{-}ord}^{(p^e)}_{\widetilde{P}}\left(\widetilde{x}^{d - p^e} \cdot \pi^*(\Phi(x,y))/x^d\right) - (d - p^e) \\ 
&= \mathrm{res\text{-}ord}^{(p^e)}_{\widetilde{P}}\left(\pi^*(\Phi(x,y))/x^d\right), 
\end{align*} 
where the second equality holds because $d$ is a multiple of $p^e$ and because $\Phi(x,y)$ is not a $p^e$-th power. 
Therefore, we conclude that 
\begin{align*} 
&\left\{w\text{-}\rho_{\widetilde{x}}(\widetilde{P}) - \mathrm{inv}_{\mathrm{MON,\heartsuit}}(P)\right\} \cdot p^e 
\\ &\quad 
= \mathrm{res\text{-}ord}^{(p^e)}_{\widetilde{P}}\left(\pi^*(\Phi(x,y))/x^d\right) - \left(\mathrm{ord}_P(a_{p^e}) - \mathrm{ord}_P(M_{\mathrm{tight}}) \cdot p^e)\right) 
\\ &\quad 
= (t + v_o) - \left(d - (r + \mathrm{H}(\xi_{H_y}) \cdot p^e)\right) \leq (t + v_o) - (d - (r + s)) 
\\ &\quad 
= (t + v_o) - (t + u) = v_o - u \leq p^{e-1}, 
\end{align*} 
proving the inequality $(\clubsuit 2)$. 
\end{proof} 
This finishes the proof of Proposition \ref{MHjump-prop}. 
\end{proof} 
\end{subsection} 
\begin{subsection}{Eventual decrease}\label{6.2} 
From the view point that we would like to use 
the new invariant $\mathrm{inv}_{\mathrm{MON,\spadesuit}}$ as 
an effective measure to see what is improved under the transformations specified by the algorithm and that it should strictly decrease after each transformation, the occurrence of the ``Moh-Hauser jumping 
phenomenon'' is a bad news, since the new invariant strictly increases after some special transformations, which are necessarily esoteric by Proposition \ref{invbehavior-prop}. 
We gave a detailed analysis of the esoteric transformations 
in \ref{6.1}.  Here in \ref{6.2}, using the analysis done in \ref{6.1}, 
we show that, when $\mathrm{inv}_{\mathrm{MON,\spadesuit}}$ 
goes through the ``Moh-Hauser jumping phenomenon'', it strictly decreases to a value lower than the original one after some more transformations.  This 
\emph{eventual decrease} as presented in the following proposition 
is enough to show the termination of our algorithm 
(cf.~\ref{5.3}). 
 
\smallskip 
 
\noindent \fbox{\rm \textbf{Situation}} 
 
\smallskip 
 
Consider a transformation in the procedure specified by the algorithm 
given in \ref{5.1} 
(in the monomial case with $\tau = 1$) $P \in \mathrm{Sing}({\mathcal R}) \subset W \overset{\pi}\longleftarrow \widetilde{P} \in \pi^{-1}(P) \cap \mathrm{Sing}(\widetilde{\mathcal R}) \subset \widetilde{W}$ where the new invariant strictly increases, i.e., 
$\mathrm{inv}_{\mathrm{MON,\spadesuit}}(P) < \mathrm{inv}_{\mathrm{MON,\spadesuit}}(\widetilde{P})$ 
 (while the invariant $\sigma$ stays the same and hence we remain in the monomial case with $\tau = 1$ after the transformation).  By Proposition \ref{invbehavior-prop} 
we observe that the transformation $\pi$ satisfies the following properties: 
\begin{itemize} 
\setlength{\parskip}{0pt} 
\setlength{\itemsep}{0pt} 
\item 
the transformation $\pi$ is necessarily esoteric, 
where the center of blow up is the point $P$, which is bad, and 
\item 
the point $P \in \mathrm{Sing}({\mathcal R}) \subset W$ is 
in configuration \textcircled{\footnotesize 4} or 
\textcircled{\footnotesize 5}, while the point 
$\widetilde{P} \in \mathrm{Sing}(\widetilde{\mathcal R}) 
\subset \widetilde{W}$ is in configuration \textcircled{\footnotesize 3}. 
\end{itemize} 
Observe by Proposition \ref{invbehavior-prop} that, after each transformation going from configuration \textcircled{\footnotesize 3} to configuration \textcircled{\footnotesize 3}, 
the invariant $\mathrm{inv}_{\mathrm{MON,\spadesuit}}$ strictly decreases. 
Therefore, after the closed point $\widetilde{P}$ followed by finitely many points in configuration \textcircled{\footnotesize 3}, we reach the last closed point $P^{\triangle}$ in configuration \textcircled{\footnotesize 3}, which is followed by a closed point $P^{\sharp}$ in configuration \textcircled{\footnotesize 4} or \textcircled{\footnotesize 5} (unless we reach Case 1, 2, or 3 as described in Theorem 1 in the process): 
\begin{table}[hbtp] 
\centering 
\begin{tabular}{r|ccccccccc} 
closed points & $P$ & $\leftarrow$ & $\widetilde{P}$ & $\leftarrow$ 
& $\cdots$ & $\leftarrow$ & $P^{\flat}$ & $\leftarrow$ & $P^{\sharp}$ \\ 
\hline 
configuration & 
\textcircled{\footnotesize 4}  or \textcircled{\footnotesize 5} 
&& 
\textcircled{\footnotesize 3} 
&& all in \textcircled{\footnotesize 3} 
&& \textcircled{\footnotesize 3} 
&& \textcircled{\footnotesize 4} or \textcircled{\footnotesize 5}. 
\end{tabular} 
\end{table} 
 
\noindent 
By Proposition \ref{invbehavior-prop} again we have the following inequalities among the values of the new invariant 
$$\mathrm{inv}_{\mathrm{MON,\spadesuit}}(P) < \mathrm{inv}_{\mathrm{MON,\spadesuit}}(\widetilde{P}) > \cdots > \mathrm{inv}_{\mathrm{MON,\spadesuit}}(P^{\flat}) > \mathrm{inv}_{\mathrm{MON,\spadesuit}}(P^{\sharp}).$$ 
\begin{art_prop} 
Under the situation above, we have the following inequalities. 
$$\mathrm{inv}_{\mathrm{MON,\spadesuit}}(P) > \mathrm{inv}_{\mathrm{MON,\heartsuit}}(P) > \mathrm{inv}_{\mathrm{MON,\spadesuit}}(P^{\sharp}).$$ 
In particular, 
compared to the original value at the closed point $P$, 
the value of the new invariant $\mathrm{inv}_{\mathrm{MON,\spadesuit}}$ 
strictly decreases at the closed point $P^{\sharp}$, 
i.e., we have the \emph{eventual decrease}. 
\end{art_prop} 
\begin{proof} 
The first inequality 
$\mathrm{inv}_{\mathrm{MON,\spadesuit}}(P) > 
\mathrm{inv}_{\mathrm{MON,\heartsuit}}(P)$ 
follows easily from Remark 5 (2) and the definition of 
the lexicographical order (cf.~Definition \ref{invs-def}). 
 
We show the following 
inequalities 
$$ 
(\ast)\quad 
2m^\heartsuit>\widetilde{m}\geq 
m^\flat>w\text{-}\rho_{x^{\sharp}}(P^{\sharp}) + m^\natural, 
$$ 
where we set 
$m^{\heartsuit} = \mathrm{inv}_{\mathrm{MON,\heartsuit}}(P)$, 
$\widetilde{m} = w\text{-}\rho_{\widetilde{x}}(\widetilde{P})$, 
$m^{\flat} = w\text{-}\rho_{x^{\flat}}(P^{\flat})$ and 
$$m^{\natural} = 
\begin{cases} 
\mathrm{ord}_{P^{\sharp}}(M^{\sharp}_{\mathrm{usual}}) 
- \mathrm{ord}_{P^{\sharp}}(M^{\sharp}_{\mathrm{tight}}) 
& 
\text{when $P^{\sharp}$ is in configuration 
\textcircled{\footnotesize 4}},  \\ 
w\text{-}\rho_{y^{\sharp}}(P^{\sharp}) 
& 
\text{when $P^{\sharp}$ is in configuration 
\textcircled{\footnotesize 5}}. 
\end{cases} 
$$ 
Note that the the second inequality 
$m^{\heartsuit} = \mathrm{inv}_{\mathrm{MON,\heartsuit}}(P) > 
 \mathrm{inv}_{\mathrm{MON,\spadesuit}}(P^{\sharp})$ 
follows easily from the inequality 
$2m^{\heartsuit} > w\text{-}\rho_{x^{\sharp}}(P^{\sharp}) + m^\natural$ 
and the definitions. 
 
The remaining part of the proof is devoted to 
showing the inequalities $(\ast)$. 
\begin{proof}[Proof for $2 m^{\heartsuit}>\widetilde{m}$]\quad 
Assume $\widetilde{m} \geq 2 m^{\heartsuit}$.  Then we have 
$$ 
p^e > \left(\widetilde{m} - m^{\heartsuit}\right) \cdot p^e 
\geq m^{\heartsuit} \cdot p^e 
$$ 
by $(\clubsuit 1)$ or $(\clubsuit 2)$ in Proposition \ref{MHjump-prop} and 
by the assumption. 
Recall that, by Proposition \ref{MHjump-prop}, the initial form 
$\mathrm{In}_P(a_{p^e}) = \Phi(x,y)$ of the last coefficient 
$a_{p^e}$ of $h$ is a homogeneous polynomial of degree 
$d =np^e$ ($n\in{\mathbb Z}_{>1}$) of the following form 
$$\Phi(x,y) = x^r \cdot y^s \cdot (y - x)^t \cdot \psi(x,y)$$ 
where $\psi(x,y)$ is a homogeneous polynomial of degree $u$ and 
\begin{align*} 
r = \mathrm{ord}_{\xi_{H_x}}(a_{p^e}) = \mathrm{H}(\xi_{H_x}) \cdot p^e, 
\quad 
s = \lceil \mathrm{H}(\xi_{H_y}) \cdot p^e \rceil 
\geq \mathrm{H}(\xi_{H_y}) \cdot p^e, 
\quad 
t = l \cdot p^e 
\end{align*} 
with $l \in {\mathbb Z}_{\geq 0}$. 
Observe 
\begin{align*} 
m^{\heartsuit} \cdot p^e 
&= \mathrm{inv}_{\mathrm{MON,\heartsuit}}(P) \cdot p^e = \left(\mathrm{H}(P) - \mathrm{ord}_P(M_{\mathrm{tight}})\right) \cdot p^e \\ 
&= 
\mathrm{ord}_P(a_{p^e}) - (\mathrm{H}(\xi_{H_x}) \cdot p^e + \mathrm{H}(\xi_{H_y}) \cdot p^e) 
\qquad({\text{since }P\text{ is bad}}) 
\\ 
&\geq d - (r + s) = u + t = u + l \cdot p^e. 
\end{align*} 
Therefore, we have $p^e > m^{\heartsuit} \cdot p^e \geq u + l \cdot p^e$.  Writing $u = \alpha \cdot p^e + \beta$ with $\alpha, \beta \in {\mathbb Z}_{\geq 0}$ and $0 \leq \beta < p^e$, we conclude $l = 0,\alpha = 0$ and hence $u = \beta$.  On the other hand, by the inequality $(\diamondsuit)$ in 
Proposition \ref{MHjump-prop}, we have $u = \beta < p^e - s$ and hence $s + u < p^e$.  However, since $r < p^e$, this implies $d = \deg \Phi(x,y) = \deg (x^ry^s\psi(x,y)) = r + s + u < 2p^e$, which contradicts the description of $d$ above. 
This finishes the proof of the inequality $2 m^{\heartsuit}>\widetilde{m}$. 
\end{proof} 
\begin{proof}[Proof for 
$\widetilde{m}\geq m^{\flat}$]\quad 
Since the invariant $w\text{-}\rho$ never increases during a sequence of transformations with consecutive points in configuration \textcircled{\footnotesize 3}, we have the inequality $\widetilde{m} \geq m^{\flat}$. 
\end{proof} 
\begin{proof}[Proof for $m^{\flat} \geq 
w\text{-}\rho_{x^{\sharp}}(P^{\sharp}) + m^\natural$]\quad 
We first remark that 
\begin{center} 
$P^{\sharp}$ is in configuration 
\textcircled{\footnotesize 4} (resp.~\textcircled{\footnotesize 5}) 
$\Longleftrightarrow$ $P^{\flat}$ is good (resp.~bad). 
\end{center} 
We set 
$$ 
\mu^\natural= 
\begin{cases} 
\mu(P^\flat) &\text{when $P^\flat$ is good}, 
\\ 
d^\flat/p^e &\text{when $P^\flat$ is bad}. 
\end{cases} 
$$ 
We show the following relations (0), (1) and (2), 
which imply the inequality 
$m^{\flat} \geq w\text{-}\rho_{x^{\sharp}}(P^{\sharp}) + m^\natural$ 
immediately: 
\begin{itemize} 
\setlength{\parskip}{0pt} 
\item[(0)] 
${\begin{cases} 
\mathrm{H}(\xi_{H_{x^{\sharp}}}) = \mathrm{H}(\xi_{H_{x^{\flat}}}) = r^{\flat}/p^e, 
&\mathrm{H}(\xi_{H_{y^{\sharp}}}) = \mu^{\natural} - 1, \\ 
\mu(\xi_{H_{x^{\sharp}}}) = \mu(\xi_{H_{x^{\flat}}}) = \mu(P^{\flat}), 
&\mu(\xi_{H_{y^{\sharp}}}) = \mu(P^{\flat}) - 1. 
\end{cases}}$ 
\item[(1)] 
$m^\natural 
\leq \mu^{\natural} - r^{\flat}/p^e$ 
\quad(\text{the equality holds when $P^\flat$ is good 
}), 
\item[(2)] 
$w\text{-}\rho_{x^{\sharp}}(P^{\sharp}) = m^{\flat} 
- (\mu^{\natural} - r^{\flat}/p^e)$. 
\end{itemize} 
 
\smallskip 
 
The proof of the relations above 
in the case when $P^\flat$ is good 
is almost identical to the one in the case when $P^\flat$ is bad. 
(We draw the attention of the reader to the fact that (1) in the former case 
is an equality, while (1) in the latter case is an inequality.) 
Therefore, in the following, we only give a proof in the latter case when $P^\flat$ is bad. 
 
\smallskip 
 
The last two equalities about the invariant $\mu$ in (0) are obvious.  We will verify the remaining relations. 
Recall that we are in the monomial case with $\tau = 1$ 
at the closed point $P^{\flat}$. 
Let $(h^{\flat},p^e)$ be the unique element in the L.G.S. ${\mathbb H}^{\flat}$ of the idealistic filtration ${\mathcal R}^{\flat}$ at $P^{\flat}$.  Suppose that $h^{\flat}$ is in the following Weierstrass form $h^{\flat} = (z^{\flat})^{p^e} + a_1^{\flat}(z^{\flat})^{p^e-1} + \cdots + a_{p^e - 1}^{\flat}(z^{\flat}) + a_{p^e}^{\flat}$, where $a_i^{\flat} \in k[[x^{\flat},y^{\flat}]]$ with $\mathrm{ord}_{P^{\flat}}(a_i^{\flat}) > i \hskip.1in (i = 1, \ldots, p^e)$ 
for a regular system of parameters $(z^{\flat},x^{\flat},y^{\flat})$ of $\widehat{{\mathcal O}_{W^{\flat},P^{\flat}}}$, and that the divisor $H_{x^{\flat}} = \{x^{\flat} = 0\}$ is the bad divisor in configuration \textcircled{\footnotesize 3}.  Suppose further that $h^{\flat}$ is well-adapted at $P^{\flat}$ and at the generic point of the divisor $H_{x^{\flat}}$ simultaneously.  Write $a_{p^e}^{\flat} = (x^{\flat})^{r^{\flat}} \cdot \left\{g^{\flat}(y^{\flat}) + x^{\flat} \cdot \omega_{x^{\flat}}(x^{\flat},y^{\flat})\right\}$ where 
$r^{\flat} = \mathrm{H}(\xi_{H_{x^{\flat}}}) \cdot p^e, 0 \neq g^{\flat}(y^{\flat}) \in k[[y^{\flat}]]$, and $\omega_{x^{\flat}}(x^{\flat},y^{\flat}) \in k[[x^{\flat},y^{\flat}]]$.  Note that $\mathrm{In}_{\xi_{H_{x^{\flat}}}}(a_{p^e}^{\flat}) = (x^{\flat})^{r^{\flat}} \cdot g^{\flat}(y^{\flat})$ is not a $p^e$-th power, since $h^{\flat}$ is well-adapted at $\xi_{H_{x^{\flat}}}$ with respect to $(z^{\flat},x^{\flat},y^{\flat})$.  We may further 
assume (cf.~Proposition 1 (3)) that the equality $(*^{\flat})\ \mathrm{res\text{-}ord}^{(p^e)}_{P^{\flat}}\left((x^{\flat})^{r^{\flat}} \cdot g^{\flat}(y^{\flat})\right)/p^e = \mathrm{ord}_{P^{\flat}}\left((x^{\flat})^{r^{\flat}} \cdot g^{\flat}(y^{\flat})\right)/p^e$ holds.  Let $\mathrm{In}_{P^{\flat}}(a_{p^e}^{\flat}) = \Phi^{\flat}(x^{\flat},y^{\flat})$ be the initial form (i.e., the lowest degree homogeneous part) of the last coefficient $a_{p^e}^{\flat}$ of the element $h^{\flat}$.  Note that neither $\mathrm{In}_{P^{\flat}}(a_{p^e}^{\flat}) = \Phi^{\flat}(x^{\flat},y^{\flat})$ nor $\mathrm{In}_{\xi_{x^{\flat}}}(a_{p^e}^{\flat}) =  (x^{\flat})^{r^{\flat}} \cdot g^{\flat}(y^{\flat})$ is a $p^e$-th power, since $h^{\flat}$ is well-adapted at $P^{\flat}$ and $\xi_{H_{x^{\flat}}}$ simultaneously with respect to $(z^{\flat},x^{\flat},y^{\flat})$. 
 
When we take the transformation $P^{\flat} \in \mathrm{Sing}({\mathcal R}^{\flat}) \subset W^{\flat} \overset{\pi^{\sharp}}\longleftarrow P^{\sharp} \in {\pi^{\sharp}}^{-1}(P^{\flat}) \cap \mathrm{Sing}({\mathcal R}^{\sharp}) \subset W^{\sharp}$ with the center of blow up being the point $P^{\flat}$, the closed point $P^{\sharp}$ in configuration \textcircled{\footnotesize 5} has a regular system of parameters $(z^{\sharp},x^{\sharp},y^{\sharp}) = (z^{\flat}/y^{\flat},x^{\flat}/y^{\flat},y^{\flat})$, and we compute the transformation 
$$h^{\sharp} = {\pi^{\sharp}}^*(h^{\flat})/(y^{\flat})^{p^e} = (z^{\sharp})^{p^e} + a^{\sharp}_1(z^{\sharp})^{p^e-1} + \cdots + a^{\sharp}_{p^e-1}(z^{\sharp}) + a^{\sharp}_{p^e}$$ 
where $a^{\sharp}_i = {\pi^{\sharp}}^*(a_i^{\flat})/(y^{\flat})^i$ for $i = 1, \ldots, p^e - 1, p^e$, and where $\mathrm{ord}_{P^{\sharp}}(a^{\sharp}_i) > i$ for $i = 1, \ldots, p^e - 1, p^e$, since $P^{\sharp} \in \mathrm{Sing}({\mathcal R}^{\sharp})$ and the invariant $\sigma$ stays the same at $P^{\sharp}$ and since the equality $(*^{\flat})$ holds. 
 
\noindent Observe that 
\begin{itemize} 
\setlength{\parskip}{0pt} 
\setlength{\itemsep}{0pt} 
\item[$\circ$] 
the divisor $H_{y^{\sharp}} = \{y^{\sharp} = 0\}$ is bad, since $P^{\flat}$ is bad, and 
\item[$\circ$] 
the divisor $H_{x^{\sharp}} = \{x^{\sharp} = 0\}$ is bad, since it is the strict transform of the bad divisor $H_{x^{\flat}} = \{x^{\flat} = 0\}$. 
\end{itemize} 
We compute 
$$a^{\sharp}_{p^e} = (y^{\sharp})^{d^{\flat} - p^e} \cdot\left\{\Phi^{\flat}(x^{\sharp},1) + y^{\sharp} \cdot \omega_{y^{\sharp}}(x^{\sharp},y^{\sharp})\right\},$$ 
where $\mathrm{In}_{\xi_{H_{y^{\sharp}}}}\left(a^{\sharp}_{p^e}\right) = (y^{\sharp})^{d^{\flat} - p^e} \cdot \Phi^{\flat}(x^{\sharp},1) = \Phi^{\flat}(x^{\flat},y^{\flat})/(y^{\flat})^{p^e}$ is not a $p^e$-th power because $\mathrm{In}_{P^{\flat}}(a_{p^e}^{\flat}) = \Phi^{\flat}(x^{\flat},y^{\flat})$ is not, and where $\omega_{y^{\sharp}}(x^{\sharp},y^{\sharp}) \in k[[x^{\sharp},y^{\sharp}]]$. 
 
We also compute 
$$a^{\sharp}_{p^e} = (x^{\sharp})^{r^{\flat}} \cdot (y^{\sharp})^{r^{\flat} - p^e} \cdot \{g^{\flat}(y^{\sharp}) + x^{\sharp} \cdot y^{\sharp} \cdot \omega_{x^{\flat}}(x^{\sharp} \cdot y^{\sharp},y^{\sharp})\},$$ 
where $\mathrm{In}_{\xi_{H_{x^{\sharp}}}}\left(a^{\sharp}_{p^e}\right) = (x^{\sharp})^{r^{\flat}} \cdot (y^{\sharp})^{r^{\flat} - p^e} \cdot g^{\flat}(y^{\sharp}) = (x^{\flat})^{r^{\flat}} \cdot g^{\flat}(y^{\flat})/(y^{\flat})^{p^e}$ 
is not a $p^e$-th power because $\mathrm{In}_{\xi_{H_{x^{\flat}}}}(a_{p^e}^{\flat}) = (x^{\flat})^{r^{\flat}} \cdot g^{\flat}(y^{\flat})$ is not. 
 
We also have the inequalities 
$$\begin{cases} 
\ \mathrm{ord}_{\xi_{H_{x^{\sharp}}}}(a^{\sharp}_{p^e}) 
= r^{\flat} < 
\mu(\xi_{H_{x^{\flat}}}) \cdot p^e = \mu(\xi_{H_{y^{\sharp}}}) \cdot p^e 
&\text{(since $H_{x^{\flat}}$ is bad)} 
,\\ 
\ \mathrm{ord}_{\xi_{H_{y^{\sharp}}}}(a^{\sharp}_{p^e}) 
= d^{\flat} - p^e < 
\mu(P^{\flat}) \cdot p^e - p^e = \mu(\xi_{H_{y^{\sharp}}}) \cdot p^e 
&\text{(since $P^{\flat}$ is bad)}. 
\end{cases}$$ 
 
Therefore, $h^{\sharp}$ is well-adapted at $\xi_{H_{x^{\sharp}}}$ and $\xi_{H_{y^{\sharp}}}$ simultaneously with respect to $(z^{\sharp},x^{\sharp},y^{\sharp})$.  Therefore, we conclude that the first two equalities in (0) hold 
$$\mathrm{H}(\xi_{H_{x^{\sharp}}}) = \mathrm{H}(\xi_{H_{x^{\flat}}}) = r^{\flat}/p^e, 
\quad 
\mathrm{H}(\xi_{H_{y^{\sharp}}}) = (d^{\flat} - p^e)/p^e 
= d^{\flat}/p^e - 1= \mu^\natural - 1.$$ 
We also conclude that the inequality in (1) holds 
\begin{align*} 
w\text{-}\rho_{y^{\sharp}}(P^{\sharp}) 
&= \left[\mathrm{res\text{-}ord}^{(p^e)}_{P^{\sharp}}\left((y^{\sharp})^{d^{\flat} - p^e} \cdot \Phi^{\flat}(x^{\sharp},1)\right) - \mathrm{ord}_{P^{\sharp}}((x^{\sharp})^{r^{\flat}} \cdot (y^{\sharp})^{d^{\flat} - p^e})\right]/p^e \\ 
&\leq \left[(d^{\flat} - p^e) + d^{\flat}\} - \{r^{\flat} + (d^{\flat} - p^e)\right]/p^e 
= (d^{\flat} - r^{\flat})/p^e= \mu^\natural - r^{\flat}/p^e. 
\end{align*} 
We compute to check the equality in (2) 
\begin{align*} 
\lefteqn{w\text{-}\rho_{x^{\sharp}}(P^{\sharp})} 
\\ 
&\quad 
= \left[\mathrm{res\text{-}ord}^{(p^e)}_{P^{\sharp}}\left((x^{\sharp})^{r^{\flat}} \cdot (y^{\sharp})^{r^{\flat} - p^e} \cdot g^{\flat}(y^{\sharp})\right) - \mathrm{ord}_{P^{\sharp}}\left((x^{\sharp})^{r^{\flat}} \cdot (y^{\sharp})^{d^{\flat} - p^e}\right)\right]/p^e \\ 
&\quad 
= \left[\mathrm{ord}_{P^{\sharp}}\left((x^{\sharp})^{r^{\flat}} \cdot (y^{\sharp})^{r^{\flat} - p^e} \cdot g^{\flat}(y^{\sharp})\right) - \mathrm{ord}_{P^{\sharp}}\left((x^{\sharp})^{r^{\flat}} \cdot (y^{\sharp})^{d^{\flat} - p^e}\right)\right]/p^e \\ 
&\quad 
= \left[\{r^{\flat} + (r^{\flat} - p^e) + m^{\flat} \cdot p^e\} - \{r^{\flat} + (d^{\flat} - p^e)\}\right]/p^e 
\\ 
&\quad 
= m^{\flat} - (d^{\flat} - r^{\flat})/p^e 
= m^{\flat} - (\mu^{\natural} - r^{\flat}/p^e). 
\end{align*} 
In order to obtain the second and third equalities above, we use the equalities 
$(\ast^{\flat})$ and 
$m^{\flat} = w\text{-}\rho_{x^{\flat}}(P^{\flat}) = \mathrm{ord}_{P^{\flat}}g^{\flat}(y^{\flat})$. 
This completes the proof of the relations (0), (1), (2), and hence 
the proof of the inequality 
$m^{\flat} \geq w\text{-}\rho_{x^{\sharp}}(P^{\sharp}) + m^\natural$. 
\end{proof} 
This completes the proof of Proposition 7. 
\end{proof} 
\end{subsection} 
\end{section} 
\begin{section}{Comparison of the old invariant with the new invariant} 
\begin{subsection}{Comparison Table} 
We present the table comparing the descriptions of 
the old invariant $\mathrm{inv}_{\mathrm{MON}}$ in \cite{KM2} and 
the new invariant $\mathrm{inv}_{\mathrm{MON,\spadesuit}}$ in this paper. 
\begin{table}[hbtp] 
\label{comparison} 
\centering 
\begin{tabular}{c|c||cc|c} 
\multicolumn{2}{c||}{Configuration} 
&\multicolumn{3}{|c}{Description} 
\\ 
\hline 
\multicolumn{5}{l}{$\mathrm{inv}_{\mathrm{MON}}\phantom{\biggl\{}$} 
\\ \cline{2-4}\cline{2-4} 
& \textcircled{\footnotesize 1} & \multicolumn{2}{c|}{$(0,0,\mu_x)$ 
}&\\ \cline{2-4} 
& \textcircled{\footnotesize 2} & \multicolumn{2}{c|}{ 
$(0,0,\min\{\mu_x,\mu_y\}, \max\{\mu_x,\mu_y\})$ 
}&\\ \cline{2-4} 
& \textcircled{\footnotesize 3} & \multicolumn{2}{c|}{ 
$(\rho_x,0,\mu_x)$ 
}&\\ \cline{2-4} 
& \textcircled{\footnotesize 4} & \multicolumn{2}{c|}{ 
$(\min\{\rho_x,\mu_x\}, \max\{\rho_x,\mu_x\})$ 
}&\\ \cline{2-4} 
& \textcircled{\footnotesize 5} & \multicolumn{2}{c|}{ 
$(\min\{\rho_x,\rho_y\}, \max\{\rho_x,\rho_y\})$ 
}&\\ 
\cline{2-4} 
\multicolumn{5}{l}{$\mathrm{inv}_{\mathrm{MON,\spadesuit}}\phantom{\biggl\{}$ 
 } 
\\ 
\cline{2-4}\cline{2-4} 
& \textcircled{\footnotesize 1} & \multicolumn{2}{c|}{0}&\\ \cline{2-4} 
& \textcircled{\footnotesize 2} & \multicolumn{2}{c|}{0}&\\ \cline{2-4} 
& \textcircled{\footnotesize 3} & \multicolumn{2}{l|}{ 
$ 
\begin{array}{l} 
\mathrm{lex}\left\{w\text{-}\rho_x, \mathrm{ord}_P(M_{\text{usual}}) 
- \mathrm{ord}_P(M_{\text{tight}})\right\} 
\\ \quad 
= \mathrm{lex}\left\{w\text{-}\rho_x,w\text{-}\mu_x\right\} 
\end{array} 
$ 
}&\\ \cline{2-4} 
& \textcircled{\footnotesize 4} & \multicolumn{2}{l|}{ 
$ 
\begin{array}{l} 
\mathrm{lex}\left\{w\text{-}\rho_x, 
\mathrm{ord}_P(M_{\text{usual}}) - \mathrm{ord}_P(M_{\text{tight}})\right\} 
\\ \quad 
= \mathrm{lex}\left\{w\text{-}\rho_x, w\text{-}\mu_x\right\} 
\\ \quad 
= (\min\{w\text{-}\rho_x,w\text{-}\mu_x\}, 
\max\{w\text{-}\rho_x,w\text{-}\mu_x\}) 
\end{array} 
$ 
}&\\ \cline{2-4} 
& \textcircled{\footnotesize 5} & \multicolumn{2}{l|}{ 
$ 
\begin{array}{l} 
\mathrm{lex}\left\{\mathrm{lex}\left(w\text{-}\rho_x,w\text{-}\rho_y\right), 
\mathrm{ord}_P(M_{\text{usual}}) - \mathrm{ord}_P(M_{\text{tight}})\right\} 
\\ \quad 
 = \mathrm{lex}\left\{ 
\begin{array}{l} 
(\min\{w\text{-}\rho_x,w\text{-}\rho_y\}, \max\{w\text{-}\rho_x,w\text{-}\rho_y\}), 
\\ 
\mathrm{ord}_P(M_{\text{usual}}) -\mathrm{ord}_P(M_{\text{tight}}) 
\end{array} 
\right\} 
\end{array} 
$ 
}&\\ 
\cline{2-4} 
\end{tabular} 
\end{table} 
 
\smallskip 
 
\noindent \textbf{Observation} 
 
\smallskip 
 
\noindent 1. We hope that the reader can sense the analogies and similarities between the old invariant $\mathrm{inv}_{\mathrm{MON}}$ and 
the new invariant $\mathrm{inv}_{\mathrm{MON,\spadesuit}}$ from the table. 
The major differences are: 
 
\begin{itemize} 
\setlength{\parskip}{0pt} 
\item In the new invariant, we use the ``weak'' version of the invariants, 
indicated by the prefix ``$w\text{-}$'', where we subtract the appropriate amount of contributions coming from the boundary divisors. 
 
\item In the new invariant, the number 
$\mathrm{ord}_P(M_{\text{usual}}) - \mathrm{ord}_P(M_{\text{tight}})$, 
which measures the difference between the usual monomial and the tight monomial, shows up always and more explicitly, where in the old invariant, the number is disguised as 
$\mu_x$ (in configuration \textcircled{\footnotesize 3} 
or \textcircled{\footnotesize 4}) or does not show up 
(in configuration \textcircled{\footnotesize 5}). 
\end{itemize} 
 
\noindent 2. The original argument of Benito-Villamayor \cite{BV3} (for showing the termination of the algorithm for resolution of singularities in the monomial case in dimension 3) was quite complicated.  The old invariant 
$\mathrm{inv}_{\mathrm{MON}}$ is the result of our search 
for a simple invariant which measures effectively what is improved in the process of the algorithm.  Even when the first author finally came up with the description of 
$\mathrm{inv}_{\mathrm{MON}}$ in December of 2012 in Austria, 
it remained something of a mystery to us why it works.  For example, we were wondering why we look at the invariant $\mu_x$ in configuration \textcircled{\footnotesize 4} associated to the good divisor $H_y = \{y = 0\}$, while we look at the invariant $\rho_y$ in configuration \textcircled{\footnotesize 5} associated to the bad divisor $H_y = \{y = 0\}$.  The interpretation of the old invariant 
$\mathrm{inv}_{\mathrm{MON}}$ in terms of the new invariant 
$\mathrm{inv}_{\mathrm{MON,\spadesuit}}$, where the latter is 
based upon the original philosophy of Villamayor, demystifies the mechanism 
to some extent, e.g. demystifies the use of $\mu_x$ as the disguise of the number 
$\mathrm{ord}_P(M_{\text{usual}}) - \mathrm{ord}_P(M_{\text{tight}})$. 
\end{subsection} 
\begin{subsection}{Why ``$w\text{-}$'', i.e.,  the ``weak'' version of the invariant?} 
Of course this demystification comes with a seemingly hefty price: The old invariant strictly decreases after each transformation, while the new one strictly increases from time to time.  We were happy that, using the old invariant, we did not encounter the ``Moh-Hauser jumping 
phenomenon'', one of the well-known obstacles toward establishing an algorithm for resolution of singularities in positive characteristic.  So why are we looking at the new invariant, using ``$w\text{-}$'', which inevitably brings back the ``Moh-Hauser jumping 
phenomenon'' ?  The motivation lies in our attempt to go into higher dimensions. 
 
\smallskip 
 
\noindent 
1. \textit{``weak order'' vs ``usual order''}: 
 
\smallskip 
 
It is well-known and easy to observe that 
the (usual) order of an ideal ${\mathcal I}$ on a nonsingular variety $W$ may strictly increase after blowing up along a smooth center $C$ (even when $C$ is taken within the locus of highest order) if $\dim W > 1$, while the weak order does not increase.  This is the very reason, when we face the problem of resolution of singularities of the triplet $(W,({\mathcal I},a),E)$ in characteristic zero, why we use the weak order and not 
the usual order (cf.~\ref{1.2}). 
 
\smallskip 
 
\noindent 2. \textit{Why the old invariant does not increase after blow up in dimension 3, even though we are not using the weak version?} 
 
\smallskip 
 
Consider the definition of the invariant $\rho_x$.  After the process of cleaning, we look at the last coefficient $a_{p^e} = x^r \cdot \{g_x(y) + x \cdot \omega_x(x,y)\}$ and the invariant $\rho_x$ is computed using the usual order $\rho_x = \mathrm{ord}_P(g_x(y))/p^e$. 
The reason why the old invariant does not increase after blow up, roughly speaking, is that 
the invariant $\rho_x$ is computed using the order on $V = \{z = x = 0\}$ of $\dim V = 1$, the only dimension where the usual order does not increase after blow up !  It is this beneficial peculiarity, unique to the low dimension $\dim W = 3$, that guarantees the old invariant does not increase. 
 
\smallskip 
 
\noindent 3. \textit{What happens if we try to go into higher dimensions?} 
 
\smallskip 
 
Let's consider what happens when $\dim W = 4$.  Then the last coefficient is of the form, after cleaning, $a_{p^e} = x^r \cdot \{g_x(y,z) + x \cdot \omega(x,y,z)\}$ with respect to a regular system of parameters $(w,x,y,z)$, where $h$ is in the Weierstrass form $h = w^{p^e} + a_1w^{p^e - 1} + \cdots + a_{p^e}$ with $a_i \in k[[x,y,z]]$ and $\mathrm{ord}_P(a_i) > i$ for $i = 1, \ldots, p^e$.  If we define the invariant $\rho_x$, without using the weak version, by the formula $\rho_x = \mathrm{ord}_P(g_x(y,z))/p^e$, then it could easily increase strictly after a point blow up, as the usual order on the surface $V = \{w = x = 0\}$ of $\dim V = 2$ could increase strictly after a point blow up. 
This is why we want to use the weak version $w\text{-}\rho_x$, subtracting the appropriate amount of contributions coming from the boundary divisors, leading to the use of 
the weak version of the invariants in higher dimensions. 
Actually the new invariant in dimension 3 arose from our attempt to generalize our method in \cite{KM2} to the case in dimension 4.  Our hope is that we can carry out an analysis of the ``Moh-Hauser jumping 
phenomenon'' leading to the eventual decrease in dimension 4, similar to the one in dimension 3, and hence that we can show the termination of the algorithm for resolution of singularities in the monomial case, whose choice of the center, nevertheless, is much more involved in dimension 4 than in dimension 3.  Details will be discussed elsewhere. 
\end{subsection} 
\end{section} 

 
 
\end{document}